\documentclass[a4paper,12pt]{article}
\usepackage[top=2.5cm,bottom=2.5cm,left=2.5cm,right=2.5cm]{geometry}
\usepackage{cite, amsmath, amssymb}
\usepackage{amssymb}
\usepackage{graphicx}
\newenvironment{proof}{{\noindent \it Proof.}}{\hfill $\blacksquare$\par}
\usepackage{latexsym}
\usepackage{caption}
\usepackage{textcomp}
\usepackage{graphicx,booktabs,multirow}
\usepackage{tikz}
\usetikzlibrary{calc,arrows,positioning,shapes}
\usepackage{bigdelim}
\usepackage{nicematrix}
\usepackage{mathtools}
\usepackage{comment}
\usepackage{makecell}
\usetikzlibrary{decorations.pathreplacing}
\usetikzlibrary{intersections}
\usepackage{enumerate}
\usepackage{graphicx,booktabs,multirow}
\usepackage{appendix}
\newtheorem{theorem}{Theorem}[section]
\newtheorem{proposition}[theorem]{\rm\bfseries Proposition}
\newtheorem{Observation}[theorem]{\rm\bfseries Observation}
\newtheorem{lemma}[theorem]{Lemma}

\newtheorem{remark}[theorem]{Remark}
\newtheorem{conjecture}[theorem]{Conjecture}

\newtheorem{Proof of Theorem 1.7.}[theorem]{Proof of Theorem 1.7.}

\newtheorem{definition}[theorem]{Definition}
\usepackage[section]{placeins}
\def\NAT@def@citea{\def\@citea{\NAT@separator}}
\allowdisplaybreaks

\begin{document}
	\vspace*{10mm}
	
	\noindent
	{\Large \bf  The saturation number of $K^s_t$}
	
	\vspace*{7mm}
	
	\noindent
	{\large \bf Xinghui Zhao,  Lihua You*,  Xiaoxue Zhang}
	\noindent
	
	\vspace{7mm}
	
	\noindent
	School of Mathematical Sciences, South China Normal University,  Guangzhou, 510631, P. R. China.

	\noindent
	e-mail: {\tt 2022021990@m.scnu.edu.cn (X. Zhao)},\quad{\tt ylhua@scnu.edu.cn (L. You)},
	
	\quad{\tt zhang\_xx1209@163.com (X. Zhang)}.
	
	\noindent
	$^*$ Corresponding author
	\noindent
	
	\vspace{7mm}

	\noindent
	{\bf Abstract} \ 
	\noindent
	For a given  graph $F$, a graph $G$ is said to be $F$-saturated if $G$ contains no copy of $F$ but for any edge $uv\notin E(G)$, $G+uv$ contains a copy of $F$. The saturation number $sat(n,F)$ is defined as the minimum number of edges among all $n$-vertex $F$-saturated graphs. The virus graph $K^s_t$, where $s\geq0$ and $t\geq \max\{3,s\}$, is a graph of order $s+t$ constructed by attaching $s$ distinct leaves to $s$ different vertices of a complete graph $K_t$. Hua and Peng [Discrete Math. 349 (2026) 114674] determined $sat(n,K^2_3)$ and characterized its corresponding extremal graphs. In this paper, we determine $sat(n,K^3_3)$ and  $sat(n,K^2_t)$ with $t\geq 4$,  together with the structural descriptions of the related extremal saturated graphs.
	\\[2mm]
	
	\noindent
	{\bf Keywords:} \ Saturated graphs;  Saturation number; Virus; Extremal graphs

	\baselineskip=0.30in
	
	\section{Introduction}
	
	\hspace{1.5em}Throughout this paper, we consider  simple  graphs. Let $G=(V(G),E(G))$ be a graph with vertex set $V(G)$ and edge set $E(G)$ with $|E(G)|=e(G)$.  We write  $uv \in E(G)$ if the vertices $u$ and $v$ are adjacent in $G$, and $uv \not\in E(G)$ otherwise. Let $v\in V(G)$, $U\subset V(G)\setminus \{v\}$ be a non-empty vertex set. If there exists $u\in U$ such that $uv\in E(G)$,  we denote $v\sim U$, and $v\not\sim U$ otherwise. Let $N_G(u)$ ($N(u)$ for short) be the neighborhood set of $u$ in $G$, $N_G[u]=N_G(u)\cup \{u\}$ ($N[u]$ for short). Then the degree of the vertex $v$ in $G$ is equal to $|N_G(v)|$ (the cardinality of the set $N_G(v)$), denoted by $d_G(v)$ ($d(v)$ for short). We use $\delta(G)$  to denote the minimum degree  of $G$. For $u,v\in V(G)$, the \emph{distance} between $u$ and $v$ is the length of the shortest path connecting $u$ and $v$ in $G$, denoted by $d_G(u,v)$ ($d(u,v)$ for short).  The \emph{eccentricity} $\varepsilon_{G}(v)$ (for short $\varepsilon(v)$) of the vertex $v\in V(G)$ is given by
	$\varepsilon_{G}(v) = \max\{d(u, v) | u\in V(G)\}$. The \emph{diameter} of $G$ is the largest eccentricity among all vertices in $G$, denoted by $diam(G)$. Clearly, $\varepsilon(v)\leq diam(G)$ for any $v\in V(G)$.
	
	Let $G$ be a graph with  $uv\notin E(G)$,   we use $G+uv $ to denote a new graph from $G$ by adding    edge $uv$  to  $G$, and $kG$ the disjoint union of $k$ copies of graph $G$. Given two vertex-disjoint graphs $G_1$ and $G_2$, we denote by $G_1\cup G_2$ the disjoint union of the two graphs,   and $G_1\vee G_2$ the graph obtained from $G_1\cup G_2$ by joining each vertex of $G_1$ with each of $G_2$. 
	
	Usually,  $K_n$  denote a complete graph  of order $n$. The \emph{virus}, denoted by $K^s_t$ with  $s\geq0$ and  $t\geq \max\{3,s\}$ , is the graph of order $s+t$ obtained by attaching $s$ leaves to $s$ distinct vertices of $K_t$.

	
	Given a family of graphs $\mathcal{F}$, a graph is $\mathcal{F}$-free if it contains no  copy of member of $\mathcal{F}$. A graph $G$ is called  $\mathcal{F}$-\emph{saturated} if $G$ is $\mathcal{F}$-free but $G+e$ is not for any $e\in E(\overline{G})$.  The \emph{saturation number} of $\mathcal{F}$, denoted by $\mathrm{sat}(n,\mathcal{F})$, is the minimum number of edges
	among all $n$-vertex $\mathcal{F}$-saturated graphs. Further, $\mathrm{SAT}(n,\mathcal{F})=\{G\mid |V(G)|=n, e(G)=\mathrm{sat}(n,\mathcal{F}), \text{and $G$ is $\mathcal{F}$-saturated}\}$. Similarly,  the \emph{connected saturation number}  $\mathrm{csat}(n,\mathcal{F})$ is the minimum number of edges among all $n$-vertex connected  $\mathcal{F}$-saturated graphs, and $\mathrm{CSAT}(n,\mathcal{F})=\{G\mid |V(G)|=n, e(G)=\mathrm{csat}(n,\mathcal{F}), \text{and $G$ is  connected  $\mathcal{F}$-saturated graph}\}$.  If $\mathcal{F}=\{F\}$, then we write $\mathrm{sat}(n,\mathcal{F})$ as $\mathrm{sat}(n,F)$, $\mathrm{csat}(n,\mathcal{F})$ as $\mathrm{csat}(n,F)$,   $\mathrm{SAT}(n,\mathcal{F})$ as $\mathrm{SAT}(n,F)$, and $\mathrm{CSAT}(n,\mathcal{F})$ as $\mathrm{CSAT}(n,F)$, respectively. In 1964, Erd\H{o}s,  Hajnal, and  Moon \cite{e2} presented the first result on saturated numbers as follows.
		\begin{theorem}{\rm(\!\!\cite{e2})}\label{e2}
		If $3\leq \alpha\leq n$, then $\mathrm{sat}(n,K_{\alpha})=e(K_{\alpha-2}\vee (n-\alpha+2)K_1)$. The only $n$-vertex $K_{\alpha}$-saturated graph with $\mathrm{sat}(n,K_{\alpha})$ edges is the graph $K_{\alpha-2}\vee (n-\alpha+2)K_1$.
	\end{theorem}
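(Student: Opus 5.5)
We will prove that every $n$-vertex $K_\alpha$-saturated graph $G$ satisfies $e(G)\ge (\alpha-2)(n-\alpha+2)+\binom{\alpha-2}{2}$, with equality only for $K_{\alpha-2}\vee (n-\alpha+2)K_1$. The argument is by induction on $\alpha$. The base case $\alpha=3$ is the statement that a triangle-saturated graph has at least $n-1$ edges and equality forces the star $K_1\vee (n-1)K_1$. Saturation forces any two nonadjacent vertices to have a common neighbour, so $G$ is connected (diameter at most $2$) and $e(G)\ge n-1$. If $e(G)=n-1$ then $G$ is a tree of diameter at most $2$, hence a star. The construction side is immediate for all $\alpha$: $K_{\alpha-2}\vee (n-\alpha+2)K_1$ has clique number $\alpha-1$. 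Adding an edge between two of the independent vertices creates a $K_\alpha$ together with the $K_{\alpha-2}$, and it has the stated number of edges.

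For the inductive step, take a $K_\alpha$-saturated $G$ with $\alpha\ge4$ and let $v$ be a vertex of minimum degree $\delta$. There are two cases.

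\emph{Case $v$ adjacent to all other vertices.} Then $G-v$ is $K_{\alpha-1}$-saturated on $n-1$ vertices: adding a missing edge $xy$ to $G$ creates a $K_\alpha$, which we may assume contains $v$ since $v$ is universal. Deleting $v$ leaves a $K_{\alpha-1}$ in $G-v+xy$. Applying induction gives
\[
e(G)=(n-1)+e(G-v)\ge (n-1)+(\alpha-3)(n-\alpha+2)+\binom{\alpha-3}{2}.
\]
This equals the target bound. Equality forces $G-v\cong K_{\alpha-3}\vee(n-\alpha+2)K_1$, hence $G\cong K_{\alpha-2}\vee(n-\alpha+2)K_1$.

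\emph{Case $v$ not universal.} Here we use a direct counting argument, following the classical Erdős–Hajnal–Moon idea. Every non-neighbour $u$ of $v$ must together with $v$ have $\alpha-2$ common neighbours forming a clique, so $\delta\ge\alpha-2$. If $\delta\ge 2(\alpha-2)$ (roughly), then $e(G)\ge \delta n/2$ already exceeds the bound for $n$ large relative to $\alpha$. The middle range $\alpha-2\le\delta<2(\alpha-2)$ needs a finer count. Let $A=N(v)$ and $B=V\setminus N[v]$. Each $u\in B$ has at least $\alpha-2$ neighbours inside $A$, and $G[A]$ contains a $K_{\alpha-2}$. Counting edges inside $A$, edges between $A$ and $B$, and the degrees within $B$ should then give $e(G)\ge \binom{\alpha-2}{2}+(\alpha-2)(n-\alpha+2)$ plus a positive excess whenever $G$ is not the extremal graph. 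This is because a non-universal minimum-degree vertex forces either extra edges inside $A$ or extra edges from vertices of $B$.

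I expect this second case to be the main obstacle, especially obtaining strict inequality uniformly in $n$, including small $n$ near $\alpha$. If it proves stubborn, the fallback is the standard alternative proof. Observe that the saturation condition makes the complement's edges "covered" by $K_{\alpha-2}$'s in common neighbourhoods, and then apply Bollobás's set-pair inequality to the pairs $(\{x,y\}, V\setminus(N(x)\cup N(y))\ldots)$ to get the edge bound. Equality analysis would then come from the tightness of the Bollobás inequality.
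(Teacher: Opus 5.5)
The paper does not prove this statement; it quotes it from Erd\H{o}s, Hajnal and Moon and uses it as a black box, so your proposal has to stand on its own. Several parts are correct:
\begin{itemize}
\item the base case and the construction;
\item the universal-vertex case (if the $K_\alpha$ in $G+xy$ misses $v$, swap $v$ in for a vertex other than $x,y$);
\item the case $\delta\ge 2\alpha-4$, which in fact works for every $n$, since $(\alpha-2)n$ exceeds the target by $\binom{\alpha-1}{2}$.
\end{itemize}

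The genuine gap is the middle range of the non-universal case. This is the real content of the theorem, and there you only assert that a count ``should'' work. Write $A=N(v)$, $B=V\setminus N[v]$ and $k=\delta-(\alpha-2)$. The ingredients you list are $G[A]\supseteq K_{\alpha-2}$, $e(A,B)\ge(\alpha-2)|B|$ and $\sum_{u\in B}d(u)\ge\delta|B|$. Combined, they give
\[
e(G)\ge \delta+\binom{\alpha-2}{2}+\frac{\alpha-2+\delta}{2}\,(n-1-\delta),
\]
which exceeds the target by exactly $\frac{k}{2}(n-3\alpha+7-k)$. For $k=0$ this settles everything, including uniqueness: equality forces $G[A]\cong K_{\alpha-2}$ and $B$ independent and completely joined to $A$. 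But for $1\le k\le\alpha-3$ and $n<3\alpha-7+k$ the bound falls below the target. For $\alpha=5$, $n=7$, $\delta=4$ you get only $14<15$, and summing the minimum-degree condition over $A$ does not improve this. So this range needs a genuinely structural use of saturation, plus a strict inequality for the uniqueness claim, and neither is supplied.

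Your fallback is the right tool but uses the wrong sets: $V\setminus(N(x)\cup N(y))$ has no fixed size and gives no cross-intersection property. The standard argument needs no induction at all.
\begin{itemize}
\item For each non-edge $e_i=\{x_i,y_i\}$ of $G$, fix an $(\alpha-2)$-clique $C_i\subseteq N(x_i)\cap N(y_i)$; it exists by saturation.
\item Set $A_i=e_i$ and $B_i=V\setminus(e_i\cup C_i)$, so $|A_i|=2$, $|B_i|=n-\alpha$ and $A_i\cap B_i=\emptyset$.
\item If $A_i\cap B_j=\emptyset$, then $e_i\subseteq e_j\cup C_j$. Since $e_j$ is the only non-edge of $G$ inside $e_j\cup C_j$, this gives $i=j$.
\item Bollob\'as's set-pair theorem then bounds the number of non-edges by $\binom{n-\alpha+2}{2}$, i.e.\ $e(G)\ge\binom{n}{2}-\binom{n-\alpha+2}{2}=e(K_{\alpha-2}\vee(n-\alpha+2)K_1)$.
\item In the equality case of that theorem, the $A_i$ are all $2$-subsets of a fixed $(n-\alpha+2)$-set $X$ and $B_i=X\setminus A_i$. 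This forces $C_i=V\setminus X$ for every $i$, so the non-edges of $G$ are exactly the pairs inside $X$, which is the uniqueness statement.
\end{itemize}
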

	
	 Since then, extensive research has been conducted on the saturation number for various graph families $\mathcal{F}$, such as cycles \cite{Chen2011,Lan,Ollmann,Tuza}, complete bipartite graphs \cite{Chen2014,Huang,Ollmann}, unions of cliques \cite{Chen,Fau,Li,Zhu}, wheel graphs \cite{Qiu,Song}, books \cite{cgt}, and join graphs \cite{Hu2025,Hu,z}, among others. 
	
	Clearly, a connected graph of order at least $t+1$ is $K_t^1$-saturated if and only if it is $K_t$-saturated. 
	For this reason, we focus our attention on $K_t^s$-saturated graphs with $s\ge 2$. In 2026, Hua and Peng \cite{Hua} determined the saturation number of $K^2_3$ and characterized the  corresponding extremal graphs. Motivated  by the above results, in this paper, we study the saturation numbers of $K^3_3$ and $K^2_t$ with $t\geq4$. 
	
	Before presenting the main results of this paper, we first introduce a special family of graphs. Let $H$ be a connected graph,  $\mathcal{M}_n^t(H)$ be the family of graphs of order $n$ constructed from $H$ such that each graph $G\in\mathcal{M}_n^t(H)$ is obtained by attaching cliques to vertices of $H$, so that   for any vertex $v_i\in V(H)$, there exist nonnegative integers $p_i$ and $q_i$ such that $v_i$ is adjacent to all vertices of each of the $p_i$ copies of $K_{t-1}$, and to exactly $t-2$ vertices of each of the $q_i$ copies of $K_t$, and $p_i,q_i$ satisfy the following requirements: (1) if $H\cong K_2$ and $q_i=0$, then $p_i\neq 1$; (2) if $H\cong K_{t-2}\vee 2K_1$ with $t\geq4$, then there exist at least two distinct vertices $v_i,v_j\in V(H)$ such that $p_i+q_i\neq0$ and $p_j+q_j\neq0$; (3) if $H\cong K_2\vee 3K_1$, then either there exist two distinct vertices $v_i,v_j\in V(H)$ such that $p_i+q_i\neq0$ and $p_j+q_j\neq0$,  or there exists $v_k\in V(H)$ with $d_H(v_k)=4$ such that $p_k+q_k\neq0$.  In particular, if $q_i=0$ for every $v_i\in V(H)$, we write $\mathcal{M}_n^t(H)$ as $\mathcal{SM}^t_n(H)$; for $t=4$, we write $\mathcal{M}_n^4(H)$ as $\mathcal{M}_n(H)$ for short.   For clarity, we present an illustration of such a graph in Figure \ref{f1}.

	\begin{figure}[ht]
		\centering
		\begin{tikzpicture}[
			scale=1,
			round/.style={circle, draw, thick, minimum size=11mm},  smallv/.style={
				circle, 
				draw, 
				thick, 
				inner sep=0pt,     
				minimum size=5.5mm 
			}
			]
			
			\node[smallv] (v1) at (0, 0)  {$v_1$};
			\node[smallv] (v2) at (3, 0)  {$v_2$};
			\draw[thick] (v1)--(v2);
			
			\node[round] (A) at (-1.5, 2) {$K_{t-1}$};
			\node[round] (B) at (-2.5, 0.5)   {$K_{t-1}$};
			
			\draw[thick] (v1)--(A.280);
			\draw[thick] (v1)--(A.340);
			\draw[thick] (v1)--(A.310);
			
			\draw[thick] (v1)--(B);
			\draw[thick] (v1)--(B.20);
			\draw[thick] (v1)--(B.320);
			
			\node[rotate=150] at (-2, 1.1) {$\vdots$}; 
			\node at (-3, 2) {$p_1$};     
			
			\node[round] (C) at (-2, -1) {$K_{t-2}$};  
			\node[round] (D) at (-3.2, -1.8) {$K_2$};  
			
			\node[round] (I) at (0, -2) {$K_{t-2}$};  
			\node[round] (J) at (0, -3.5) {$K_2$};   
			
			\draw[thick] (C)--(D);
			\draw[thick] (C)--(D.60);
			\draw[thick] (C)--(D.10);
			\draw[thick] (I)--(J.60);
			\draw[thick] (I)--(J.90);
			\draw[thick] (I)--(J.120);
			
			\draw[thick] (v1)--(C.6);
			\draw[thick] (v1)--(C.36);
			\draw[thick] (v1)--(C.66);
			\draw[thick] (v1)--(I.60);
			\draw[thick] (v1)--(I.90);
			\draw[thick] (v1)--(I.120);
			
			\node[rotate=60] at (-1.2, -1.6) {$\vdots$}; 
			\node at (-2.1, -3.7) {$q_1$};     
			
			\node[round] (E) at (5, 2) {$K_{t-1}$};
			\node[round] (F) at (6, 0.5)   {$K_{t-1}$};
			
			\draw[thick] (v2)--(E.220);
			\draw[thick] (v2)--(E.250);
			\draw[thick] (v2)--(E.190);
			
			\draw[thick] (v2)--(F.190);
			\draw[thick] (v2)--(F.160);
			\draw[thick] (v2)--(F.220);
			
			\node[rotate=35] at (5.4, 1.33) {$\vdots$};
			\node at (6.8, 1.8) {$p_2$};     
			
			\node[round] (G) at (4.8, -1.3) {$K_{t-2}$};
			\node[round] (H) at (6.2, -2.1) {$K_2$};    
			\node[round] (K) at (3, -2) {$K_{t-2}$};
			\node[round] (L) at (3, -3.5) {$K_2$};

			\draw[thick] (G)--(H);
			\draw[thick] (G)--(H.120);
			\draw[thick] (G)--(H.180);
			\draw[thick] (K)--(L.60);
			\draw[thick] (K)--(L.90);
			\draw[thick] (K)--(L.120);
			
			\draw[thick] (v2)--(G);
			\draw[thick] (v2)--(G.115);
			\draw[thick] (v2)--(G.175);
			\draw[thick] (v2)--(K.60);
			\draw[thick] (v2)--(K.90);
			\draw[thick] (v2)--(K.120);
			
			\node[rotate=300] at (4, -1.8) {$\vdots$};
			\node at (5.1, -3.8) {$q_2$};     
			\draw[decorate,decoration={brace}] (-3.6,0.6) -- (-1.9,2.9);
			\draw[decorate,decoration={brace}] (0.2,-4.4) -- (-4.1,-2.2);
			\draw[decorate,decoration={brace}] (6.9,-2.5) -- (2.9,-4.3);
			\draw[decorate,decoration={brace}] (5.6,3) -- (7,0.5);
		\end{tikzpicture}
		\caption{A graph in $\mathcal{M}_n^t(K_2)$.}\label{f1}
	\end{figure}

	Our main results are as follows.
	
		\begin{theorem}\label{t2}
		Let $n\geq 6$, $\mathcal{G}=   (\mathcal{M}_q(K_2)\cup \frac{n-q}{5}K_5)\cup (\mathcal{M}_q(K_3)\cup\frac{n-q}{5}K_5)\cup( \mathcal{M}_p(K_2\vee (k-2)K_1)\cup\frac{n-p}{5}K_5)$ with $q\leq n$, $6 \leq p\leq n$, $4\leq k\leq p$ and  $\mathcal{M}_q(H)\cup H^*=\{\overline{H}\cup H^*\mid \overline{H}\in \mathcal{M}_q(H)\}$. Then  $\mathrm{sat}(n,K^2_4)=2n-3$ and $\mathrm{SAT}(n,K^2_4)=\mathcal{G}$.
	\end{theorem}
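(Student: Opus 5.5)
We prove the upper bound by showing every graph in $\mathcal{G}$ is $K^2_4$-saturated with exactly $2n-3$ edges, and the lower bound $e(G)\ge 2n-3$ together with the characterization by a structural analysis of an arbitrary $K^2_4$-saturated graph $G$ on $n$ vertices.

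\textbf{Upper bound.} Take $G\in\mathcal{M}_q(H)$ with $H\in\{K_2,K_3,K_2\vee(k-2)K_1\}$.
\begin{itemize}
\item Each pendant $K_3$ (a $p_i$-copy) contributes $3$ vertices and $3+3=6$ edges, so $2$ per vertex.
\item Each pendant $K_4$ attached to $v_i$ through $2$ vertices (a $q_i$-copy) contributes $4$ vertices and $6+2=8$ edges, again $2$ per vertex.
\item The core $H$ satisfies $e(H)=2|V(H)|-3$ in all three cases: $1=2\cdot2-3$, $3=2\cdot3-3$, and $1+2(k-2)=2k-3$.
\end{itemize}
Hence $e(G)=2q-3$. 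Each $K_5$ component has $10=2\cdot5$ edges, so the total is $2n-3$.

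Next check that $G$ is $K^2_4$-free. Every $K_4$ lies inside a $q_i$-copy, inside a $K_5$, or in $K_2\vee 2K_1$ inside $H$. A $K_4$ inside a $q_i$-copy has only two vertices with outside neighbours, and both go to the single vertex $v_i$, so it admits no two distinct leaves. The conditions (1)--(3) on $p_i,q_i$ must be used exactly where the core contains a $K_4$ or is too small; this explains requirement (2) for $K_{2}\vee 2K_1$ and requirement (3) for $K_2\vee3K_1$.

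Then check saturation by listing the non-edge types:
\begin{itemize}
\item inside $H$;
\item between an attached clique and $H$;
\item between two attached cliques;
\item between components.
\end{itemize}
Adding such an edge creates a $K_4$ whose two vertices gain distinct private neighbours. For example, joining two $K_5$'s $A,B$ by $ab$ gives $a$ plus three vertices of $A$, with leaves $b$ and a fifth vertex of $A$. For $uv$ with $u$ in a pendant $K_3$ at $v_i$, the $K_4$ is $\{v_i\}\cup K_3$ with leaves $v$ and a neighbour of $v_i$. A $K_5$ component is needed for isolated-ish pieces, because $n-q\equiv0\pmod5$ is allowed.

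\textbf{Lower bound.} Let $G$ be $K^2_4$-saturated. For each component $C$, show $e(C)\ge 2|C|$ if $C$ contains no vertex of some small "core", and $e(C)\ge 2|C|-3$ otherwise, with at most one component of the second kind. The key observation is that if $uv\notin E(G)$, then $G+uv$ contains a $K_4$ through $uv$ or a $K_4$ using $uv$ as a pendant edge.
\begin{itemize}
\item In the first case $u,v$ have two adjacent common neighbours.
\item In the second case one endpoint lies in a $K_4$ of $G$ that becomes a $K^2_4$.
\end{itemize}
A $K^2_4$-free graph's $K_4$'s are very constrained: any $K_4$ has at most one vertex with neighbours outside, or all its outside neighbours are a single vertex. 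Consequently, each $K_4$ either forms a component $K_5$ or $K_4$ (the latter fails saturation for $n\ge6$), or hangs on one cut vertex.

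Decompose $G$ by removing these pendant clique structures. The vertices not in any $K_4$ must, for every non-edge, share a triangle-neighbourhood. Use a discharging/degree argument to show that the remaining graph $H$ satisfies $e(H)\ge 2|H|-3$, and every pendant structure contributes at least $2$ edges per vertex. Two components of the second kind are impossible, since a non-edge between them cannot create a $K^2_4$ unless one side contains a $K_4$ with a free vertex. Equality forces $H$ to be $K_2$, $K_3$ or $K_2\vee(k-2)K_1$, using that $H$ must be "$K_3$-join-saturated" in a sense similar to Theorem~\ref{e2}, with minimum edges $2h-3$ achieved by $K_2\vee(h-2)K_1$.

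\textbf{Main obstacle.} The hardest step is the lower bound on the core: proving $e(H)\ge2|H|-3$ with equality only for the listed graphs. This requires handling low-degree vertices (degree $1$ and $2$) and showing that they are pendant to $K_3/K_4$ structures. I would first prove $\delta$-type claims:
\begin{itemize}
\item a degree-$1$ vertex is impossible for $n\ge6$;
\item a degree-$2$ vertex's neighbours are adjacent and dominate.
\end{itemize}
Then induct by deleting degree-$2$ vertices, which drops the edge count by $2$ while preserving saturation of the rest.
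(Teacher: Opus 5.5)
There is a genuine gap. Your overall architecture matches the paper's: reduce to a single non-$K_5$ component (Lemmas~\ref{l12} and~\ref{l13}), use the K\"onig-type dichotomy to see that every $K_4$ is either a triangle hanging on a lucky vertex or a $K_4$ hanging on one outside vertex, strip these pieces off at exactly $2$ edges per vertex, and bound what remains.

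\textbf{The core bound.} The gap is the step you yourself call the main obstacle: $e(G')\ge 2|V(G')|-3$ for the remaining core $G'$, with equality only for $K_2$, $K_3$ and $K_2\vee(k-2)K_1$. You leave it unproved, and the route you sketch does not close it. Deleting degree-$2$ vertices only reduces you to the case $\delta(G')\ge 3$. There degree counting gives only $e(G')\ge \frac{3}{2}|V(G')|$, which is below $2|V(G')|-3$ once $|V(G')|\ge 7$. An unspecified discharging argument in that case amounts to reproving Erd\H{o}s--Hajnal--Moon. Your proposed first claim, that a degree-$1$ vertex is impossible, is also false. In $\mathcal{M}_8(K_2)$ with $p_1=2$ and $p_2=q_2=0$, the vertex $v_2$ is a leaf, and this graph is extremal.

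\textbf{The missing idea.} When $G'$ is not complete, it is \emph{literally} $K_4$-saturated, so Theorem~\ref{e2} applies verbatim and gives both the bound and the equality case $G'\cong K_2\vee(k-2)K_1$. Three points establish this.

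First, $G'$ is $K_4$-free, because every $K_4$ of $G$ is either deleted entirely or loses its unlucky triangle. The lucky vertices must stay in the core, so describing the core as ``vertices not in any $K_4$'' is wrong.

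Second, take a non-edge $uv$ of $G'$ and a copy $Q$ of $K^2_4$ in $G+uv$. $Q$ cannot use $uv$ as a pendant edge. Otherwise its center would be a $K_4$ of $G$ containing the core vertex $u$. That $K_4$ would be type-I with $u$ lucky, and its other three vertices have no outside neighbours.

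Third, it follows that $u,v\in V(CQ)$. The other two center vertices lie in $G'$, since every deleted vertex has at most one neighbour in $G'$.

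If instead $G'\cong K_f$ with $f\le 3$, you get $e(G)=2n-\frac{f(5-f)}{2}$ directly.

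\textbf{The equality characterization.} Proving $\mathrm{SAT}(n,K^2_4)=\mathcal{G}$ also requires deriving conditions (1)--(3) from saturation in the equality case, which you do not do. These are saturation conditions, not $K^2_4$-freeness conditions as your upper-bound paragraph suggests. For example, $H=K_2\vee 2K_1$ contains no $K_4$. The missing edge of $H$ creates a $K_4$ whose two leaves must come from attachments at two different vertices of $H$.
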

	
	\begin{theorem}\label{t3}
		Let $n,t,r$ be positive integers with  $n\geq t+2\geq7$ and $2\leq r\leq t-1$.  Then

		\begin{equation}\label{s11}
			\mathrm{csat}(n,K^2_t)=\begin{cases}
				\frac{tn}{2}-\frac{t+2}{2},&\text{if}\  n\equiv 1\ (\mathrm{mod}\ t-1) \ \text{and} \ n\neq2t-1; \\
				\frac{tn}{2}-\frac{r(t-r+1)}{2},&\text{if}\  n\equiv r\ (\mathrm{mod}\ t-1);\\
				t^2-t,&\text{if}\ n=2t-1.
			\end{cases}	\end{equation}  and 
		\begin{equation}\label{s12}
			\mathrm{CSAT}(n,K^2_t)=\begin{cases}
				\mathcal{SM}^t_{n}( K_{t-2}\vee 2K_1),\ &\text{if}\  n\equiv 1\ (\mathrm{mod}\ t-1) \ \text{and} \ n\neq2t-1; \\
				\mathcal{SM}^t_{n}(K_r), \ &\text{if}\  n\equiv r\ (\mathrm{mod}\ t-1);\\
				\mathcal{SM}^t_{n}(K_1), \ &\text{if}\ n=2t-1.
			\end{cases}
		\end{equation}
	
	\end{theorem}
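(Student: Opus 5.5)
We prove the lower bound $e(G)\ge \frac{tn}{2}-c(n,t)$ for every connected $K^2_t$-saturated graph $G$ on $n\ge t+2$ vertices, and then match it with the families $\mathcal{SM}^t_n(\cdot)$.

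\emph{Structure.} Let $G$ be connected and $K^2_t$-saturated. First, $G$ contains a $K_t$. Indeed, adding any non-edge $uv$ creates a copy of $K^2_t$, whose $K_t$ either avoids $uv$ or uses it; a short argument with $n\ge t+2$ and connectivity then yields a $K_t$ in $G$ itself. Since $G$ is $K^2_t$-free, every copy $Q$ of $K_t$ in $G$ has at most one vertex with a neighbour outside $Q$. As $G$ is connected and $n>t$, exactly one vertex $v_Q$ of $Q$ has outside neighbours, so $Q-v_Q$ is a pendant $K_{t-1}$ attached at $v_Q$. We now analyse $G$ after deleting all such pendant $K_{t-1}$'s. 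The remaining "core" graph $H$ is $K_t$-free, and the saturation condition translates into a condition on $H$. For a non-edge $uv$ inside $H$, the new copy of $K^2_t$ must use $uv$ in its $K_t$ (a $K_t$ avoiding $uv$ would be a pendant one, and one checks this cannot work unless some attachment is present). So $u,v$ have $t-2$ common neighbours forming a clique, plus leaves available. The non-edges between a pendant clique and the rest are what force the remaining structure: adding $xw$ with $x$ in a pendant $K_{t-1}$ at $v$ makes $Q$ itself gain a second external vertex. Hence such edges are automatically fine whenever $v$ or another vertex of $Q\setminus\{v\}$ has an external neighbour, which always holds. Thus $G$ is saturated essentially iff $H$, with its pendant cliques, satisfies the conditions defining $\mathcal{M}^t_n(H)$. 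In the connected, minimum-edge setting the $q_i$-type attachments (a $K_t$ meeting $v_i$ in $t-2$ vertices) turn out to be suboptimal, which is why $\mathcal{SM}$ appears.

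\emph{Counting.} Each pendant $K_{t-1}$ contributes $t-1$ vertices and $\binom{t-1}{2}+(t-1)=\frac{t(t-1)}{2}$ edges, i.e.\ exactly $\frac t2$ per vertex. Hence
\[ e(G)=\frac{t}{2}(n-|H|)+e(H), \]
and we must minimise $e(H)-\frac t2|H|$ over admissible cores $H$ with $|H|\equiv n \pmod{t-1}$. If $H$ has no pendant clique everywhere, then $H$ itself must be $K^2_t$-saturated without pendant cliques. I would show that then every vertex has degree $\ge t-1$, giving $e(H)\ge \frac{t-1}{2}|H|$. Combined with $|H|\ge t+2$, this shows that such cores are worse than small cores whenever $n$ is large relative to $t$. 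So the core is small: $|H|\le t$ or so. Among small cores:
\begin{itemize}
\item $H=K_r$ gives $\binom r2-\frac{tr}{2}=-\frac{r(t-r+1)}{2}$, valid for $r\ge2$ (condition (1) covers $r=2$).
\item For $r=1$, i.e.\ $n\equiv1$, $H=K_1$ requires $n-1$ divisible by $t-1$, but a single vertex with pendant cliques must have at least two cliques to be saturated. This gives $n=2t-1$ exactly when that is the only option, with value $\frac t2(n-1)=t^2-t$.
\item Otherwise the best core of size $t$ is $K_{t-2}\vee 2K_1$ (size $\equiv1$), giving $\binom t2-1-\frac{t^2}{2}=-\frac{t+2}{2}$. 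Condition (2) ensures that adding the missing edge creates a $K_t$ with two external leaves.
\end{itemize}
Comparing with $H=K_1$ for $n\equiv1$, $n>2t-1$ handles the first case.

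\emph{Main obstacle.} The key difficulty is the lower bound for the core: proving that any admissible $H$ with $|H|\equiv r$ satisfies $e(H)-\frac t2|H|\ge -\frac{r(t-r+1)}{2}$ (respectively $-\frac{t+2}{2}$), with equality only for the listed graphs. I would argue via minimum degree inside $H$ counting pendant attachments. A vertex of $H$ with no attachment and small degree leads to a non-edge $uv$ whose addition cannot create the required $K_t$ plus two leaves, using that common neighbourhoods must contain a $K_{t-2}$. Then a case analysis on $|H|\le 2t-2$ is done by a Turán-type/clique-covering bound, and $|H|$ large by the degree bound.

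Finally, the upper bounds follow by verifying that the members of $\mathcal{SM}^t_n(K_r)$, $\mathcal{SM}^t_n(K_{t-2}\vee 2K_1)$ and $\mathcal{SM}^t_n(K_1)$ are connected, $K^2_t$-free, and saturated, by checking the three types of non-edges.
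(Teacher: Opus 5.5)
Your structural step fails, and the failure cannot be repaired. It is not true that every $K_t$ in a connected $K^2_t$-saturated graph has at most one vertex with an outside neighbour. The two pendant vertices of $K^2_t$ must be distinct, so a clique $Q$ whose only outside neighbour is a single vertex $w$, adjacent to exactly $t-2$ vertices of $Q$, is allowed. This is the type-II $K_t$ of Proposition~\ref{o2}(ii), and it is exactly the $q_i$-attachment you call ``suboptimal'' without argument. Once such cliques exist, your identity $e(G)=\frac t2(n-|H|)+e(H)$ with $|H|\equiv n \pmod{t-1}$ no longer holds. Each type-II clique costs $\frac t2\cdot t+\frac{t-4}{2}$ edges but shifts the residue of $|H|$ by one, and this trade can pay off.

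A concrete example shows this. Take disjoint cliques $A=\{a_1,\dots,a_{t-1}\}$ and $X=\{x_1,\dots,x_t\}$, and add the edges $a_1x_1,\dots,a_1x_{t-2}$. The only $K_t$ is $X$, and its only outside neighbour is $a_1$, so the graph is $K^2_t$-free. Adding $a_1x_{t-1}$ (or $a_1x_t$) creates the $K_t$ on $\{a_1,x_1,\dots,x_{t-1}\}$ with pendant edges $a_1a_2$ and $x_1x_t$. Adding $a_jx_k$ with $j\ge 2$ gives $X$ with pendant edges $x_ka_j$ and $x_la_1$ for some $l\in[t-2]\setminus\{k\}$. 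So this is a connected $K^2_t$-saturated graph with $n=2t-1$ and $\binom{t-1}{2}+\binom t2+t-2=t^2-t-1<t^2-t$ edges. The third case of the statement is therefore false, and no proof can establish it.

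The characterization of $\mathrm{CSAT}$ fails for the same reason. For $t=5$, $n=8$, take $K_3$ with a $K_5$ joined to one of its vertices by $3$ edges. This graph is connected and saturated, has $16=\frac{5n}{2}-4$ edges, and is not in $\mathcal{SM}^5_8(K_4)$.

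The paper's proof has the same hole. In Subcase 2.1 it derives $e(G)\ge\frac{tn}{2}-\frac{f(t-f+1)}{2}$, with equality only if there are no type-II cliques ($c=0$). When $c\ge1$ it never compares $\frac{tn}{2}-\frac{f(t-f+1)}{2}+\frac{c(t-4)}{2}$ with the target for the residue $n\equiv f+c \pmod{t-1}$. The example above is the case $f=t-1$, $c=1$.

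Three further steps would also need replacing.
\begin{itemize}
\item Your opening claim that $G$ contains a $K_t$ is false. The graph $K_{t-2}\vee(n-t+2)K_1$ is connected, $K_t$-free and $K^2_t$-saturated, and it also violates your proposed bound $\delta(H)\ge t-1$ for cores without pendant cliques. The paper handles this situation, and any core $G'$ that is itself $K^2_t$-saturated, by observing that such a graph is $K_t$-saturated. Theorem~\ref{e2} then gives $e(G')\ge (t-2)|V(G')|-\binom{t-1}{2}$, which for $t\ge5$ already exceeds the targets.
\item Your degree bound could not do this job even if it held. The bound $e(H)\ge\frac{t-1}{2}|H|$ only yields $e(G)\ge\frac{tn}{2}-\frac{|H|}{2}$, a deficit that grows with $|H|$, so it cannot show that large cores are worse.
\item ``The core is small'' is only asserted. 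The paper gets it from Claims 1--2: the $K_t$ created by a missing edge of $G'$ lies inside $G'$, and then $|V(G')|\le t+1$. It follows these with explicit edge counts for $|V(G')|\in\{t,t+1\}$.
\end{itemize}
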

	
	 \begin{theorem}\label{t4}
		Let $t,n,k_1,f$ be positive integers with $t\geq5$, $1\leq f\leq t-1$ and $n=f+k_1(t-1)\geq \frac{(t+3)(t-1)+t+1}{2}$, and $$g(n,t)=\begin{cases}
			\frac{tn}{2}-\frac{t(t+2)}{8},&\text{if}\ t \ \text{is even}; \\
			\frac{tn}{2}-\frac{(t+1)^2}{8},&\text{if}\  t\ \text{is odd and }  f+\frac{t+1}{2}\equiv 0\ (\mathrm{mod}\ 2);\\
			\frac{tn}{2}-\frac{(t+3)(t-1)}{8},&\text{if}\  t\ \text{is odd and }  f+\frac{t+1}{2}\equiv 1\ (\mathrm{mod}\ 2).\nonumber
		\end{cases}$$ Then 
		$\mathrm{sat}(n,K^2_t)=g(n,t)$.
	\end{theorem}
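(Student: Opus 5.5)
The argument splits into an upper bound by construction and a matching lower bound by degree counting. Throughout, write $V_{<t}$ for the set of vertices of degree less than $t$ in a $K^2_t$-saturated graph $G$.

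\textbf{Upper bound.} I would build a $K^2_t$-saturated graph with $g(n,t)$ edges as a disjoint union of components.

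\begin{itemize}
\item Most of the graph consists of $K_t$-type components. In these, every vertex has degree $t-1$ or $t$. A component of $\mathcal{SM}^t_m(K_r)$ from Theorem \ref{t3} is one example, and copies of $K_{t+1}$ are another: $K_{t+1}$ contains no $K_t^2$, since a copy of $K_t$ inside it leaves only one outside vertex, so two leaves are impossible.
\item Disconnectedness is allowed, so I can add one special component with more low-degree vertices. This component has about $\frac{t+2}{2}$ (or $\frac{t+1}{2}$, $\frac{t+3}{2}$) vertices of degree $t-1$, below the total $tn/2$.
\item The candidate for the special component is a graph $K_{t-1}$ with pendant cliques arranged so that each vertex of the core loses about half a degree. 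Concretely, I would take $\mathcal{M}$-type graphs over a core $H$ of order about $t/2$.
\item The deficiency $\frac{t(t+2)}{8}=\frac{1}{2}\cdot\frac{t}{2}\cdot\frac{t+2}{2}$ suggests a core of $t/2$ vertices, each of degree $t/2+1$ short of $t$ after accounting for clique attachments. This fixes $H$ as a complete graph on $\lfloor (t+1)/2\rfloor$ or so vertices.
\item For the parity cases with $t$ odd, the extra summand from $f$ mod 2 comes from whether the residue $f$ can be absorbed by components $K_{t-1}\cup$ pendants.
\end{itemize}

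I would then verify saturation of the union. Adding an edge between two components joins a vertex lying in a $K_t$ with a free neighbour to a vertex outside, which creates $K_t^2$. This check also forces every component to contain a $K_t$ at each vertex or nearby, so isolated small components are excluded.

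\textbf{Lower bound.} The key structural facts, presumably proved as lemmas before this theorem, are:

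\begin{enumerate}
\item Every vertex lies in a copy of $K_{t-1}$, or is adjacent to one, since adding an edge must create $K_t^2$.
\item Consequently $\delta(G)\ge t-2$, and vertices of degree below $t$ are tightly structured.
\item $G$ is $K_t^2$-free, so any $K_t$ in $G$ has essentially no outside neighbours except at a single vertex. Hence copies of $K_t$ appear only as $K_{t+1}$ components or in the pendant configuration.
\end{enumerate}

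Then
\[
2e(G)=\sum_v d(v)\ge tn-\sum_{v\in V_{<t}}(t-d(v)).
\]
The task is to show $\sum_{v\in V_{<t}}(t-d(v))\le 2\cdot(\text{deficiency})$.

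\textbf{Main obstacle.} The hard step is bounding the deficient set $V_{<t}$. I would argue as follows.

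\begin{itemize}
\item Low-degree vertices form cliques: two nonadjacent low-degree vertices $u,v$ would have $G+uv$ create $K_t^2$ using $u$ or $v$ as a leaf or clique vertex, which forces large neighbourhoods.
\item Distinct low-degree cliques force further structure. From this, the set $W$ of vertices of degree at most $t-1$ induces a complete graph of size at most about $t/2+1$.
\item A discharging count optimising $|W|\cdot(t-|W|)$-type expressions then gives a quadratic in $|W|$ maximised at $|W|\approx (t+2)/2$. This yields exactly $t(t+2)/8$, with the parity adjustments for odd $t$.
\item Integrality and the congruence $n=f+k_1(t-1)$ decide which of the odd cases is attained. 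The hypothesis $n\ge \frac{(t+3)(t-1)+t+1}{2}$ guarantees enough room for the components in the construction.
\end{itemize}

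I expect the clique structure of $W$ to be the hardest step. The natural way to get it is a case analysis on which edge of the $K_t^2$ created in $G+uv$ is the new edge $uv$.
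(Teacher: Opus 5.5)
Your upper bound is essentially the paper's construction $\mathcal{SM}^t_{n-\beta(t+1)}(K_\gamma)\cup\beta K_{t+1}$ with $\gamma$ near $\frac{t+1}{2}$, which has $\frac{tn}{2}-\frac{\gamma(t-\gamma+1)}{2}$ edges. The parity dependence you leave vague comes from $t+1\equiv 2\pmod{t-1}$: each $K_{t+1}$ component shifts the residue of the remaining order by $2$, and for odd $t$ this forces $\gamma\equiv f\pmod 2$. Contrary to your remark, small clique components are not excluded: $K_{f'}\cup\frac{n-f'}{t+1}K_{t+1}$ is saturated and is extremal for suitable $f'$, so a lower-bound proof must cover it.

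The lower bound has a genuine gap. Its structural claims are false, and the inequality it aims at fails for the extremal graphs themselves.
\begin{itemize}
\item In $\mathcal{SM}^t_m(K_\gamma)$ every vertex of a pendant $K_{t-1}$ has degree exactly $t-1$, and vertices of different pendant cliques are nonadjacent. Adding such an edge creates $K^2_t$ with the new neighbour merely a leaf, which forces nothing about neighbourhoods. So the vertices of degree at most $t-1$ do not form a clique, let alone one of order about $\frac t2+1$.
\item A core vertex carrying no pendant clique has degree $\gamma-1$, so $\delta(G)\ge t-2$ fails.
\item Your list of how copies of $K_t$ occur omits the type-II $K_t$ of Proposition~\ref{o2}(ii).
\item Decisively, when $f=\lfloor\frac{t+1}{2}\rfloor$ a connected $G\in\mathcal{SM}^t_n(K_f)$ has exactly $g(n,t)$ edges, yet $\sum_{v\in V_{<t}}(t-d(v))\ge n-f$, which is linear in $n$.
\end{itemize}
In that last example the deficit is repaid by the large degrees of the vertices carrying the pendant cliques. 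Your inequality $2e(G)\ge tn-\sum_{v\in V_{<t}}(t-d(v))$ throws exactly that excess away, so no bound on the deficient set alone can reach $g(n,t)$.

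What is missing is a charging step that bills each pendant clique to its attachment vertex, together with control of the components.
\begin{itemize}
\item \emph{Decomposition.} Proposition~\ref{o2} shows that in a connected saturated graph every $K_t$ is type-I (one lucky cut vertex, the other $t-1$ vertices of degree $t-1$) or type-II. Lemma~\ref{l4} deletes all unlucky $K_{t-1}$'s and type-II $K_t$'s, losing at least $\frac t2$ edges per deleted vertex, and leaves a connected $K_t$-free graph $G'$.
\item \emph{Connected case.} If $G'\cong K_f$ with $f\le t-1$, this gives $e(G)\ge\frac{tn}{2}-\frac{f(t-f+1)}{2}$. Otherwise $G'$ is either $K_t$-saturated of order at least $t+2$, where Theorem~\ref{e2} gives far more, or has order $t$ or $t+1$ and is handled directly. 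This yields Theorem~\ref{t3}.
\item \emph{Disconnected case.} Lemma~\ref{l12} supplies, in every component other than $K_{t+1}$, a vertex whose degree in each $K^1_t$ differs from $t-1$. Joining two such vertices creates no $K^2_t$, so $G\cong G_1\cup(l-1)K_{t+1}$ (Lemma~\ref{l13}).
\item \emph{Optimisation.} Since $|V(G_1)|\equiv f-2(l-1)\pmod{t-1}$, everything reduces to maximizing $r(t-r+1)$ over admissible residues $r$, with $r\equiv f\pmod 2$ when $t$ is odd.
\end{itemize}
Your clique-of-low-degree-vertices step needs to be replaced by an argument of this kind.
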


	\begin{theorem}\label{t1}
	Let    $n\geq 7$. Then $\mathrm{sat}(n,K^3_3)=\begin{cases}
		\frac{3n-3}{2},&\text{if}\ n \ \text{is odd}; \\
		\frac{3n}{2},&\text{if}\ n\ \text{is even}.
	\end{cases}$ Moreover,  if $n$ is odd, then $\mathrm{SAT}(n,K^3_3)=\{K_1\vee\frac{n-1}{2}K_2\}$; if $n$ is even, then $K_1\vee (\frac{n-4}{2}K_2\cup K_3)\in \mathrm{SAT}(n,K^3_3)$.
	\end{theorem}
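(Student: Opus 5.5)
The plan has two parts: an upper bound from explicit constructions, and a matching lower bound by degree counting (discharging). Throughout, $K^3_3$ is a triangle $xyz$ with three distinct pendant vertices $x',y',z'$ attached to $x,y,z$ respectively; it has $6$ vertices.

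\textbf{Upper bound.} Let $c$ be the center of $F=K_1\vee\frac{n-1}{2}K_2$.
\begin{enumerate}
\item $F$ is $K^3_3$-free. Every triangle of $F$ is $c\,a\,a'$ for some pair $aa'$. Since $N(a)=\{c,a'\}$, the vertex $a$ has no neighbour outside the triangle, so no pendant can be attached to it.
\item Adding an edge creates a copy. Any non-edge is $ab$ with $a,b$ in different pairs $aa'$ and $bb'$. Then $abc$ is a triangle with pendants $a'$ at $a$, $b'$ at $b$, and any fifth vertex at $c$, which exists since $n\ge7$.
\item Hence $F$ is saturated with $\frac{3(n-1)}{2}$ edges.
\item For even $n$, the graph $K_1\vee(\frac{n-4}{2}K_2\cup K_3)$ is handled the same way. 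A triangle inside the $K_4$ spanned by $c$ and the $K_3$ can only take a pendant at $c$, because its other vertices have all their neighbours inside that $K_4$; pair triangles are blocked as before. Every non-edge joins vertices in different blocks and closes a triangle through $c$ with pendants available. This graph has $\frac{3n}{2}$ edges.
\end{enumerate}

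\textbf{Lower bound.} Let $G$ be $K^3_3$-saturated on $n\ge7$ vertices. I would first collect local structure from saturation.
\begin{enumerate}
\item $G$ has no isolated vertex, and in fact is connected. If $v$ is isolated, or $u,v$ lie in different components, then after adding $uv$ the new edge must be a triangle edge or a pendant edge of a copy. A pendant edge $uv$ would force a $K^3_3$ minus a leaf on one side, plus an extra neighbour there, which contradicts $K^3_3$-freeness. So $uv$ lies in a triangle, which needs a common neighbour of $u$ and $v$, a contradiction.
\item Each degree-$1$ vertex $v$, with neighbour $u$: for every $w\ne u$, the edge $vw$ must lie in a triangle $vuw$ of a copy with a pendant at $v$. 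This is impossible since $d(v)=1$. Alternatively $vw$ is a pendant edge, forcing $w$ into a triangle of $G$ carrying pendants on its other two vertices. Applying this to all $w$ shows degree-$1$ vertices are very restricted; I expect at most one, attached to a vertex of large degree.
\item Each degree-$2$ vertex $v$ with $N(v)=\{a,b\}$: saturation for $vw$ forces $v$ into a triangle $vaw$ or $vbw$ with pendants, so one of $a,b$ is adjacent to almost all vertices. Thus low-degree vertices cluster around a few high-degree vertices.
\end{enumerate}
With this structure I would run a discharging count. Give each vertex charge $d(v)-3$, so the total charge is $2e(G)-3n$, and the goal is to show the total is at least $-3$. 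Vertices of degree $\le2$ have deficit $1$ or $2$. Each such vertex draws charge from a high-degree neighbour $c$, which has surplus $d(c)-3$. The key claim is that each such $c$ loses at most $d(c)-3+3$ in total, with the extra $3$ lost only once globally. This yields $2e(G)\ge 3n-3$.

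\textbf{Equality and parity.} In the equality case, the argument should show there is exactly one high-degree vertex $c$, all other vertices have degree $2$, and $G-c$ is a perfect matching. This gives $G=K_1\vee\frac{n-1}{2}K_2$, so the extremal graph is unique for odd $n$. For even $n$, the bound $2e(G)\ge3n-3$ with parity gives only $e(G)\ge\frac{3n-2}{2}$. I therefore need to exclude total degree $3n-2$, i.e.\ rule out $G$ being exactly one unit away from the equality structure: one vertex of degree $3$, or one extra unit on $c$, with everything else tight. That configuration would force an odd-order perfect matching in $G-c$, or a degree-$3$ vertex whose extra edge either creates a $K^3_3$ or breaks saturation.

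\textbf{Main obstacle.} I expect the hardest part to be the discharging step: proving that a vertex of degree $\le2$ always has a neighbour of sufficiently large degree, and that several such hubs cannot each absorb the full $-3$. This needs careful case analysis on whether the edge created by saturation sits in the triangle or is a pendant edge of the new copy. The parity refinement for even $n$ is the secondary difficulty.
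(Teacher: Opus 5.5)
Your upper bound is fine. For the even graph, the cleaner reason is that every triangle inside the $K_4$ spanned by $c$ and the $K_3$ has at least two vertices whose only neighbour outside the triangle is the single remaining vertex of that $K_4$.

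The lower bound has genuine gaps. First, your preliminary claim that a $K^3_3$-saturated graph is connected with no isolated vertex is false. Both $2K_5$ ($n=10$) and $2K_5\cup K_1$ ($n=11$) are $K^3_3$-saturated. Adding $uv$ with $u$ in a $K_5$ gives the triangle $uxy$ in that $K_5$, pendant $v$ at $u$, and the two remaining vertices of the $K_5$ as pendants at $x,y$. The faulty step is ``plus an extra neighbour there''. When $uv$ is a pendant edge, the component of $u$ only has to contain a $K^2_3$ in which $u$ is the centre vertex without a pendant, and nothing gives $u$ a further neighbour.

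So the disconnected case needs its own argument. The paper shows that every connected $K^3_3$-saturated graph of order at least $6$ has a vertex that is not a degree-$2$ centre vertex of any $K^2_3$. This forces $G\cong G_1\cup(k-1)K_5$ or $(k-p)K_5\cup pK_r$ with $p\in\{0,1\}$, $r\le 4$, and then edges are counted directly. Your discharging as designed also fails at an isolated vertex, which has charge $-3$ and no neighbour to draw from.

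Second, your degree-$2$ step only treats the case where the new edge $vw$ is a triangle edge. It can also be a pendant edge. Either $v$ is the leaf and $w$ is a centre vertex of a $K^2_3$, or $w$ is the leaf and $v$ lies in the triangle $vab$. The latter works for every $w\notin\{a,b,a',b'\}$ as soon as $ab\in E(G)$ and $a,b$ have distinct further neighbours $a',b'$. No adjacency between $w$ and $\{a,b\}$ is needed. Hence ``one of $a,b$ is adjacent to almost all vertices'' does not follow, and the hub picture behind your discharging is unsupported. These are exactly the paper's type-IV vertices. The case where all degree-$2$ vertices are of this type is handled there by a purely local count, not through a hub.

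Most fundamentally, the lower bound is never actually proved. Three things are stated only as expectations, and together they are the whole content of the theorem: the key claim that each hub loses at most $d(c)$ with the extra $3$ lost only once globally; the equality characterisation for odd $n$; and the exclusion of $2e(G)=3n-2$ for even $n$. In the paper this work is Theorems \ref{l2} and \ref{p1}. It uses:
\begin{itemize}
\item a split by $\delta(G)$ and the bound $diam(G)\le 4$;
\item a classification of degree-$2$ vertices into four types;
\item distance layers $N_i$ from a low-degree vertex, controlled by Lemma \ref{l3};
\item in the all-type-IV case, a degree count over each component of the graph induced by neighbours of degree-$2$ vertices, together with its degree-$2$ vertices. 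This count shows at least $3$ per vertex, with a deficit of $1$ allowed for at most one component.
\end{itemize}
Your parity step also presupposes that a graph with $2e(G)=3n-2$ is one unit away from a single-hub configuration. That is a stability statement you have not established, and it does not cover the hubless configurations above.
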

	
	The organization of this paper is as follows.  In Section 2, we investigate the properties of $K^2_t$-saturated graphs and present proofs of Theorems \ref{t2}--\ref{t4}.  In Section 3, we obtain some properties of $K^3_3$-saturated graphs and present a proof of  Theorem \ref{t1}. In Section 4, we summarize and discuss the work in this paper, and propose two conjectures for further research.

		\section{The saturation number of $K^2_t$ with $t\geq4$}
	
\hspace{1.5em} In the rest of the paper, we stipulate that  $G$ is not $F$-saturated if $|V(G)| < |V(F)|$, and we also write  $\{1,\ldots,t\}$ as $[t]$, 	 refer to $K_t$  as the center part of $K^s_t$, and use $V(CK^s_t)$ to denote the set of  vertices in the center part of $K^s_t$.   

 In this section, we prove Theorems \ref{t2} and \ref{t3}. 
	
	\begin{proposition}\label{o2}
		Let  $t\geq4$, $G$ be a connected $K^2_t$-saturated graph of order $n$, $V_1=\{v_1,v_2,\ldots,v_t\}\subset V(G)$, and $G[V_1]\cong K_t$. Then one of the following cases  holds:
		
		{\rm \item(i)} there exists $i\in [t]$ such that $v_i$ is a cut vertex and $d(v_j)=t-1$ for all $j\in [t]\setminus\{i\}$;
		
		{\rm \item(ii)} there exist distinct $i,j\in [t]$ such that  $d(v_i)=d(v_j)=t-1$,  $d(v_k)=t$ for all  $k\in [t]\setminus\{i,j\}$ with $vv_k\in E(G)$ for some $v\in  V(G)\setminus V_1$.
	\end{proposition}
	
	\begin{proof}
	Clearly,  there exists $v\in V(G)\setminus V_1$ such that $v\sim V_1$ since $G$ is connected and $n\geq t+2$.
		
		If there exists $u\in V(G)\setminus (V_1\cup \{v\})$ such that $u\sim V_1$, then $vv_i,uv_i\in E(G)$ for some $i\in [t]$ and $d(v_j)=t-1$ for all $j\in[t]\setminus\{i\}$ since $G$ is $K^2_t$-free, which implies that $v_i$ is a cut vertex of $G$, i.e., (i) holds.
		
		If exactly one vertex $v$ is adjacent to $V_1$, 
		we may assume that $vv_1,\ldots,vv_s\in E(G)$ and $vv_{s+1},\ldots,vv_t\notin E(G)$, where $1\leq s\leq t$. If $1\leq s\leq t-3$, then $G+vv_{s+1}$ contains no copy of $K^2_t$, a contradiction. If $s\geq t-1$, then $G$ contains  a copy of $K^2_t$ since $G$ is connected and $n\geq t+2$, a contradiction. Thus $s=t-2$, and (ii) holds.
		
		We complete the proof.
	\end{proof}
	
	\begin{definition}\label{d2}
		Let   $t\geq4$, $G$ be a connected $K^2_t$-saturated graph of order $n$, $V_1=\{v_1,v_2,\ldots,v_t\}\subset V(G)$, and $G[V_1]\cong K_t$.	If $v_1,v_2,\ldots,v_t$ satisfy (i) Proposition \ref{o2}, we call $G[V_1]$ a type-I  $K_t$, $G[V_1\setminus\{v_i\}]$ an unlucky  $K_{t-1}$, and $v_i$ a lucky vertex; if $v_1,v_2,\ldots,v_t$ satisfy (ii) of Proposition \ref{o2}, we call $G[V_1]$ a type-II   $K_t$.
	\end{definition}
	
	By Proposition \ref{o2} and Definition \ref{d2}, we know that the vertex sets of a type-I $K_t$ and a type-II $K_t$ are disjoint.
	
	\begin{lemma}\label{l4}
		Let  $ t\geq4$, $G$ be a connected $K^2_t$-saturated graph of order $n$, and $G'$  be the graph obtained from $G$ by deleting all vertices of every type-II $K_t$ and every unlucky $K_{t-1}$. Then $G'$ is a  connected $K_t$-free graph with $	e(G)\geq e(G')+\frac{t}{2}|V(G)\setminus V(G')|$, and equality holds if and only if $t=4$ or every $K_t$ in $G$ is a type-I   $K_t$.
	\end{lemma}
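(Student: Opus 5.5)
The plan is to decompose $V(G)\setminus V(G')$ into the deleted pieces (the vertex sets of the type-II $K_t$'s and of the unlucky $K_{t-1}$'s). I would show these pieces are pairwise disjoint, that each piece is joined to the rest of $G$ through exactly one vertex which survives in $G'$, and then count the edges incident with each piece.

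\emph{Step 1 (structure of the pieces).} Let $G[V_1]$ be a $K_t$ in $G$ and let $w\in V_1$ with $d(w)=t-1$. Then $N[w]=V_1$, so $V_1$ is the unique $K_t$ containing $w$.
\begin{itemize}
\item For an unlucky $K_{t-1}$, every vertex has degree $t-1$. Hence the piece meets no other $K_t$, and its only outside neighbour is the lucky vertex $v_i$. The vertex $v_i$ has degree at least $t+1$, so it lies in no unlucky $K_{t-1}$. By the remark after Definition \ref{d2}, it lies in no type-II $K_t$ either. Thus $v_i\in V(G')$.
\item For a type-II $K_t$ on $V_1$, Proposition \ref{o2}(ii) and its proof show that all edges leaving $V_1$ go to a single vertex $v\notin V_1$, which has $t-2\ge 2$ neighbours in $V_1$.
 \begin{itemize}
 \item $v$ is not in an unlucky $K_{t-1}$: such a vertex has only one neighbour outside its clique.
 \item $v$ is not in another type-II $K_t$: each vertex there has at most one neighbour outside its own clique.
 \item $v$ is not in the type-I $K_t$ of some lucky vertex with $v$ that lucky vertex: type-I and type-II vertex sets are disjoint.
 \end{itemize}
 Thus $v\in V(G')$.
\end{itemize}
It follows that distinct pieces are vertex-disjoint, there are no edges between distinct pieces, and each piece is attached to $G'$ through a single vertex of $G'$.

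\emph{Step 2 ($G'$ is connected and $K_t$-free).} Each deleted piece hangs off $G'$ at one vertex. Therefore any path in $G$ between two vertices of $G'$ that enters a piece must leave it through the same attaching vertex, so it can be shortcut. Hence $G'$ is connected, and it is nonempty because the attaching vertices survive. For $K_t$-freeness, by Proposition \ref{o2} every $K_t$ of $G$ is type-I or type-II. Type-II copies are removed entirely. Each type-I copy loses its unlucky $K_{t-1}$, and any $K_t$ of $G'$ would be a $K_t$ of $G$, so $G'$ contains none.

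\emph{Step 3 (edge count).} By Step 1, the edge set of $G$ splits as $E(G')$ together with, for each piece, its internal edges and its edges to its attaching vertex.
\begin{itemize}
\item An unlucky $K_{t-1}$ contributes $\binom{t-1}{2}+(t-1)=\frac{t}{2}(t-1)$ edges, exactly $\frac{t}{2}$ per deleted vertex.
\item A type-II $K_t$ contributes $\binom{t}{2}+(t-2)$ edges. We have $\binom{t}{2}+(t-2)-\frac{t^2}{2}=\frac{t-4}{2}\ge 0$, with equality iff $t=4$.
\end{itemize}
Summing gives $e(G)\ge e(G')+\frac{t}{2}|V(G)\setminus V(G')|$. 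Equality holds iff either $t=4$ or there is no type-II $K_t$, that is, every $K_t$ in $G$ is type-I.

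The main obstacle I expect is Step 1, namely verifying that the attaching vertex $v$ of a type-II $K_t$ cannot itself be deleted, and that pieces never overlap. This needs the degree bookkeeping above to be done carefully in each case.
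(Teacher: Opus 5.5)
Your proposal is correct and is essentially the paper's argument: $K_t$-freeness of $G'$ follows because every $K_t$ is type-I or type-II (Proposition \ref{o2}), and the inequality follows from counting $\binom{t}{2}$ edges per unlucky $K_{t-1}$ and $\binom{t}{2}+(t-2)=\frac{t}{2}\cdot t+\frac{t-4}{2}$ edges per type-II $K_t$, with equality iff $t=4$ or there is no type-II copy. Your Step 1 (the pieces are pairwise disjoint and each attaching vertex survives in $G'$) only makes explicit what the paper treats as clear; the third sub-bullet there is unnecessary, since the only deleted sets are unlucky $K_{t-1}$'s and type-II $K_t$'s, and your first two sub-bullets already rule out both.
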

	
	\begin{proof}
		Clearly, 	$G'$ is  connected by the definition of $G'$.  If $K_t$ is a subgraph of $G'$, then there exists a set $U\subseteq V(G')$ such that $G'[U]\cong K_t$. Thus $G[U]$  is also a subgraph of $G$, which implies that $G[U]$ is a type-I  $K_t$ or a type-II  $K_t$, a contradiction to the definition of $G'$. Therefore, $G'$ is a  connected $K_t$-free graph. 
		
		Without loss of generality, suppose that $G$ contains $b$ unlucky  $K_{t-1}$ and $c$ type-II   $K_t$ with $b,c\geq0$. Then  $e(G)=e(G')+b\frac{t(t-1)}{2}+c(\frac{t(t-1)}{2}+(t-2))=e(G')+\frac{t}{2}(b(t-1)+c(t+\frac{t-4}{t})) \geq e(G')+\frac{t}{2} |V(G)\setminus V(G')|$ since $t\geq4$ and $|V(G)\setminus V(G')|=b(t-1)+ct$, and   equality holds if and only if $t=4$ or $c=0$.
	\end{proof}

	\begin{lemma}\label{l12}
		Let  $t\geq4$, $G$ be a connected $K^2_t$-saturated graph of order $n$. Then there exists $v\in V(G)$ such that  $d_{K^1_t} (v)\neq t-1$ for any  subgraph $K^1_t$ of $G$.
	\end{lemma}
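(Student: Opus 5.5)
The plan is to argue by contradiction, using only Proposition \ref{o2} and the rigid degree structure it imposes on every $K_t$ in $G$. Suppose that every vertex $v\in V(G)$ has $d_{K^1_t}(v)=t-1$ in some copy of $K^1_t$ in $G$. In $K^1_t$ the vertices of degree $t-1$ are exactly the center vertices other than the one carrying the leaf. So the assumption says: for every $v$ there is a $K_t$, say $C_v$, containing $v$, and a vertex $w\in V(C_v)\setminus\{v\}$ with a neighbor outside $V(C_v)$. In particular every vertex lies in some $K_t$, so $G$ contains a $K_t$, and by Proposition \ref{o2} and Definition \ref{d2} every $K_t$ in $G$ is of type-I or type-II.

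\emph{Step 1: there is no type-I $K_t$.} Let $G[V_1]$ be a type-I $K_t$ with lucky vertex $u$. Then $u$ has at least $t-1\ge 3$ neighbors (the unlucky $K_{t-1}$) outside any other $K_t$ containing it. By assumption $u$ lies in some $K_t$, $C_u$, in which another vertex $w\neq u$ has an outside neighbor. Note $C_u\neq G[V_1]$, since in $G[V_1]$ every vertex other than $u$ has degree $t-1$. Now $C_u$ is of type-I or type-II.
\begin{itemize}
\item It cannot be type-II. In a type-II $K_t$ every vertex has at most one neighbor outside the clique, whereas $u$ has at least $t-1$ neighbors outside $C_u$.
\item So $C_u$ is type-I. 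Since $d(u)>t-1$, the vertex $u$ must be its lucky vertex. Then all other vertices of $C_u$ have degree $t-1$, so no $w\neq u$ in $C_u$ has an outside neighbor, a contradiction.
\end{itemize}

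\emph{Step 2: there is no type-II $K_t$ either.} Let $G[V_1]$ be type-II, and let $v\notin V_1$ be the vertex adjacent to the $t-2$ vertices of degree $t$ in $V_1$. By assumption $v$ lies in some $K_t$, $C_v$. I first check that $C_v\cap V_1=\emptyset$.
\begin{itemize}
\item The two vertices of degree $t-1$ in $V_1$ have all their neighbors inside $V_1$, so they are not adjacent to $v$ and cannot lie in $C_v$.
\item A vertex $v_k\in V_1$ of degree $t$ has $N[v_k]=V_1\cup\{v\}$. So a $K_t$ containing both $v$ and $v_k$ would lie in $V_1\cup\{v\}$ and contain $t-1$ vertices of $V_1$ adjacent to $v$, impossible since $v$ has only $t-2$ neighbors in $V_1$.
\end{itemize}
Hence $v$ has at least $t-2\ge 2$ neighbors outside $C_v$. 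This rules out $C_v$ being type-II, where each vertex has at most one outside neighbor. It also rules out type-I by Step 1, a contradiction.

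Thus $G$ contains no $K_t$ at all, contradicting the fact that every vertex lies in some $K_t$. Any vertex then witnesses the lemma.

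The main point needing care is the bookkeeping in Proposition \ref{o2}(ii): one must check that a vertex of degree $t$ in a type-II clique has exactly one outside neighbor, and that no $K_t$ contains both $v$ and a vertex of $V_1$. Both follow from $N[v_k]=V_1\cup\{v\}$ and $|N(v)\cap V_1|=t-2$. The rest is a direct case check.
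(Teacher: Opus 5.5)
Your proof is correct and is essentially the paper's argument. The paper takes any $K_t$ and uses as witness its lucky vertex (if it is type-I) or the unique vertex outside it adjacent to it (if it is type-II); your Steps 1 and 2, recast as a contradiction, are exactly the degree checks showing these vertices cannot be a non-leaf-carrying centre vertex of any $K^1_t$, which the paper leaves implicit. The one claim you state without proof, that no other $K_t$ through the lucky vertex $u$ meets the unlucky $K_{t-1}$, follows at once from $N[v_j]=V_1$ for every unlucky vertex $v_j$.
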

	
	\begin{proof}
		Clearly, if $G$ contains no copy of $K_t$, then the result holds.
		
		If $G$ contains a copy of $K_t$, say $Q$,  then  $Q$ is either a type-I  $K_t$ or a type-II  $K_t$ by Proposition \ref{o2} and Definition \ref{d2}. If  $Q$ is  a type-I  $K_t$, then there exists $v\in V(Q)$ such that $v$ is a lucky vertex of $Q$, and thus $d_{K^1_t} (v)\neq t-1$ for any  subgraph $K^1_t$ of $G$. If $Q$ is  a type-II  $K_t$, then there exists unique vertex $v\in V(G)\setminus V(Q)$ such that $v\sim V(Q)$, and thus $d_{K^1_t} (v)\neq t-1$ for any  subgraph $K^1_t$ of $G$.
		
	 We complete the proof.
	\end{proof}
	
	\begin{lemma}\label{l13}
		Let  $t\geq4$, $G\cong \bigcup\limits_{i=1}^{l} G_i$ be a disconnected $K^2_t$-saturated graph of order $n$ with $l\geq2$. Then $G\cong G_1\cup (l-1) K_{t+1}$, where $G_1$  is either a $K^2_t$-saturated graph,    or isomorphic to 
		$K_{f}$ with $1\leq f\leq t+1$.
	\end{lemma}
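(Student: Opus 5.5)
The plan is to exploit saturation across components. Write $G\cong\bigcup_{i=1}^{l}G_i$ with $l\ge 2$, and note first that every component $G_i$ is $K^2_t$-free. The key observation concerns a non-edge $uv$ with $u\in V(G_i)$ and $v\in V(G_j)$, $i\neq j$. In $G+uv$ the edge $uv$ is a bridge. A copy of $K^2_t$ in $G+uv$ has a $2$-connected center $K_t$, so the center lies entirely inside one component, say $G_i$. The edge $uv$ must then be a pendant edge of the copy, with $u$ in the center and $v$ as a leaf.

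It follows that for every pair of vertices $u\in V(G_i)$, $v\in V(G_j)$, either $u$ lies in a $K_t$ of $G_i$ that has a further pendant neighbor in $G_i$ (so that $G_i$ contains a $K^1_t$ with $u$ of degree $t-1$ in its center), or the symmetric statement holds for $v$ in $G_j$.

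\textbf{Step 1.} Call a vertex $u$ \emph{good} if $u$ is a center vertex of some $K^1_t\subseteq G$ and is not the vertex carrying the leaf. Then every cross pair $(u,v)$ must contain a good vertex. Hence at most one component contains a non-good vertex: if two components each had a non-good vertex, joining those two vertices would create no $K^2_t$.

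\textbf{Step 2.} Determine the components consisting entirely of good vertices. Let $G_j$ be such a component and take any copy $Q$ of $K_t$ in $G_j$. I would prove $G_j\cong K_{t+1}$ by adapting Proposition \ref{o2}, whose argument only used connectivity and $K^2_t$-freeness for this part.

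If $|V(G_j)|\ge t+2$, then $G_j$ is connected, $K^2_t$-free and $K^2_t$-saturated internally: non-edges inside $G_j$ must create $K^2_t$ within $G_j$ itself, because other components cannot help. So $G_j$ is a connected $K^2_t$-saturated graph, and Lemma \ref{l12} yields a vertex that is not good, contradicting the assumption on $G_j$.

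If $|V(G_j)|\le t+1$, then since $G_j$ contains a $K_t$ plus a pendant, $|V(G_j)|=t+1$. $K^2_t$-freeness forbids nothing further here, and internal saturation forces all missing edges to be added. Indeed, adding an edge inside a $(t+1)$-vertex graph can never create $K^2_t$, which needs $t+2$ vertices. So saturation cannot be the reason an edge is present; rather, goodness of every vertex forces each vertex to lie in a $K_t$ and to be non-special. Checking that this forces $K_{t+1}$ is the main obstacle. If $G_j\neq K_{t+1}$, the vertex outside a $K_t$ is not in any $K_t$ unless it is adjacent to $t-1$ of that clique's vertices. I would do a short case analysis on the number of copies of $K_t$ to show that some vertex fails to be good unless $G_j=K_{t+1}$.

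\textbf{Step 3.} Handle the remaining component $G_1$, the one possibly containing non-good vertices. It is either of order at most $t+1$ or of order at least $t+2$. In the first case, a non-good vertex $u\in G_1$ forces every other component to be $K_{t+1}$; the non-edges inside $G_1$ cannot create $K^2_t$ (too few vertices), so $G_1$ must be complete, $G_1\cong K_f$ with $1\le f\le t+1$. In the second case, internal non-edges of $G_1$ must create $K^2_t$ within $G_1$, since the other components are cliques $K_{t+1}$ that are disconnected from $G_1$. Hence $G_1$ is $K^2_t$-saturated. Combining the three steps gives $G\cong G_1\cup(l-1)K_{t+1}$.
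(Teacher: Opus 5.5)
Your overall strategy is the paper's. A cross non-edge $uv$ forces one endpoint to satisfy $d_{K^1_t}(u)=t-1$ for some $K^1_t$ (your ``good'' vertex). Lemma \ref{l12} gives a non-good vertex in every connected $K^2_t$-saturated component, and $K_f$ with $f\le t$ contains no $K^1_t$ at all. Hence at most one component can fail to be $K_{t+1}$. The paper orders this slightly differently: it first observes that each component is either $K^2_t$-saturated (order at least $t+2$) or complete (order at most $t+1$), and only then runs the non-good-vertex argument.

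However, the step you call the ``main obstacle'' in Step 2 would fail. Goodness of every vertex does \emph{not} force a $(t+1)$-vertex component to be $K_{t+1}$. Take $K_{t+1}-xy$ and let $Z$ be the other $t-1$ vertices; every vertex is good:
\begin{itemize}
\item $x$ is a non-leaf-carrying center vertex of the $K^1_t$ with center $Z\cup\{x\}$ and leaf $y$ hung on some $z\in Z$;
\item $y$ is good symmetrically, via the center $Z\cup\{y\}$ with leaf $x$;
\item each $z_1\in Z$ lies in the center $Z\cup\{x\}$ with leaf $y$ hung on some $z_2\neq z_1$, which exists since $t-1\ge 2$.
\end{itemize}
So no case analysis on the copies of $K_t$ can produce a non-good vertex there.

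The correct argument is the one you wrote and then dismissed with ``saturation cannot be the reason'', and which you use correctly in Step 3. For a non-edge $e$ inside a component $G_j$, any copy of $K^2_t$ in $G+e$ is connected and contains $e$, so it lies in $G_j+e$. That graph has at most $t+1<t+2$ vertices, so it cannot contain $K^2_t$. Saturation therefore forbids every non-edge of $G_j$, and $G_j$ is complete. In particular, an all-good component of order $t+1$ is $K_{t+1}$.

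With this replacement, Step 2 is immediate and your proof coincides with the paper's.
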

	
	\begin{proof}
Clearly, every $G_i$ is either a $K^2_t$-saturated graph, or isomorphic to $K_f$ with $1\leq f\leq t+1$.

  If there exist two distinct connected components $G_1$ and $G_2$ such that both $G_1$ and $G_2$ are either $K^2_t$-saturated graphs or isomorphic to $K_{f}$ with $1\leq f\leq t$, then there exist $v_1\in V(G_1)$ and $v_2\in V(G_2)$ such that $d_{K^1_t}(v_1)\neq t-1$ for any  subgraph $K^1_t$ of $G_1$ and $d_{K^1_t}(v_2)\neq t-1$ for any  subgraph $K^1_t$ of $G_2$  by Lemma \ref{l12} and $f\leq t$, and thus $G+v_1v_2$ contains no copy of $K^2_t$, a contradiction.
Therefore, $G\cong G_1\cup (l-1) K_{t+1}$, where $G_1$  is either a $K^2_t$-saturated graph,    or isomorphic to 
$K_{f}$ with $1\leq f\leq t+1$.
		\end{proof}




	Now, we show Theorems \ref{t2} and \ref{t3}, respectively.
	
	\noindent\textbf{\textit{Proof of Theorem \ref{t2}.}} If	$G\in \mathcal{G}$ with  $q\leq n$, $6 \leq p\leq n$ and  $4\leq k\leq p$, then $G$ is $K^2_4$-saturated and $e(G)=2n-3$ by direct inspection and calculation. Thus we have $\mathrm{sat}(n,K^2_4)\leq 2n-3$. 
	
Now we show $\mathrm{sat}(n,K^2_4)\geq 2n-3$ and $\mathrm{SAT}(n,K^2_4)=\mathcal{G}$. 	Let $G$ be a $K^2_4$-saturated graph of order $n\geq6$. 
It suffices to verify $e(G)\geq 2n-3$, and $G\in\mathcal{G}$ if equality holds.
	
	
		\textbf{Case 1.} $G$ is connected. 
		
			\textbf{Subcase 1.1.} $G$ contains no  copy of $K_4$.
			
			Clearly, $G$ is a $K_4$-saturated graph	since $G$ is a connected $K^2_4$-saturated graph. Then $e(G)\geq 2n-3$ with equality hold if and only if $G\cong K_2\vee (n-2)K_1$ by Theorem \ref{e2}.
			
			\textbf{Subcase 1.2.} $G$ contains  a copy of $K_4$.
			
				By Proposition \ref{o2} and Definition \ref{d2}, each copy of $K_4$ in $G$ is either a type-II $K_4$	or contains an unlucky $K_3$. Let $G'$ be defined as Lemma \ref{l4}. Then $G'$ is not a complete graph or $G'\cong K_f$ with $1\leq f\leq 3$ by Lemma \ref{l4}.
				
					If $G'\cong K_f$, then $e(G)=\frac{f(f-1)}{2}+2(n-f)=2n-\frac{f(5-f)}{2}\geq 2n-3$ by Lemma \ref{l4}, and  equality holds if and only if $f\in\{2,3\}$. Combining  the fact that $G$ is  $K^2_4$-saturated, we have  $G\in \mathcal{M}_n(K_2)\cup \mathcal{M}_n(K_3)$ if equality holds.

					If $G'$ is not a complete graph, then $G'$ is $K_4$-saturated with $|V(G')|\geq4$. Otherwise, there exists  an edge $e\in E(\overline{G'})\subseteq E(\overline{G})$ such that $G'+e$  contains no  copy of $K_4$, so $G+e$ contains no  copy of $K^2_4$,  which yields a contradiction. Thus $e(G)\geq 2|V(G')|-3+2|V(G)\setminus V(G')|=2n-3$  by  Lemma \ref{l4} and Theorem \ref{e2}, and equality holds if and only if $G'\cong  K_2\vee (k-2)K_1$ with $4\leq k\leq n$. If $G'\cong  K_2\vee 2K_1$ and there exists exactly one vertex $v_i\in V(G')$ such that $p_i+q_i\neq0$, then $G$ is not $K^2_4$-saturated, a contradiction. If $G'\cong K_2\vee 3K_1$, and neither there exist two distinct vertices $v_1,v_2\in V(G')$ such that $p_1+q_1\neq 0$ and $p_2+q_2\neq 0$, nor there exists a vertex $v_3\in V(G')$ with $d_{G'}(v_3)=4$ such that $p_3+q_3\neq 0$, then $G$ is not $K_4^2$-saturated, a contradiction. Thus we have $G\in\mathcal{M}_n(K_2\vee (k-2)K_1)$ if equality holds.

	

	\textbf{Case 2.} $G$ is disconnected.

		Then $G\cong \bigcup\limits_{i=1}^{l}G_i$, where $l\geq2$ and each $G_i$ is  connected.
		By Lemma \ref{l13}, we have $G\cong G_1\cup (l-1) K_5$, where $G_1$  is either a $K^2_4$-saturated graph,    or isomorphic to 
		$K_{f}$ with $1\leq f\leq 5$.
		

	By Case 1 and $e((l-1) K_5)=2(n-|V(G_1)|)$,  we have  $$e(G)\geq\begin{cases}
		2|V(G_1)|-3+2(n-|V(G_1)|)=2n-3,&\text{if}\ G_1 \ \text{is a $K^2_4$-saturated graph}; \\
		\frac{f(f-1)}{2}+2(n-f) \geq 2n-3,&\text{if}\ G_i\cong K_{f}.
	\end{cases}$$
Clearly, if $e(G)=2n-3$, then $ G\in   (\mathcal{M}_{q'}(K_2)\cup \frac{n-q'}{5}K_5)\cup (\mathcal{M}_{q'}(K_3)\cup\frac{n-q'}{5}K_5)\cup( \mathcal{M}_{p'}(K_2\vee (k'-2)K_1)\cup\frac{n-p'}{5}K_5)$ with $q'\leq n-5$, $6\leq p'\leq n-5$ and $4\leq k'\leq p$. 

Clearly, $\mathcal{M}_{n}(K_i)$ can be regarded as $(\mathcal{M}_{q}(K_i)\cup \frac{n-q}{5}K_5)$ with $q=n$, where $i\in \{2,3\}$, $K_2\vee (n-2)K_1$  can be regarded as $\mathcal{M}_{n}(K_2\vee (k-2)K_1)$ with $k=n$,   and $ \mathcal{M}_{n}(K_2\vee (k-2)K_1)$ can be regarded as  $\mathcal{M}_{p}(K_2\vee (k-2)K_1)\cup\frac{n-p}{5}K_5$ with $p=n$, which implies that the proof is completed by Cases 1--2.
 $\hfill\blacksquare$

	
	
	

		\noindent\textbf{\textit{Proof of Theorem \ref{t3}.}} Clearly, if $G\in  \mathcal{SM}^t_n(2K_2\vee K_{t-2})\cup\mathcal{SM}^t_n(K_r)\cup
		\mathcal{SM}^t_n(K_1)$, then $G$ is a $K^2_{t}$-saturated graph with $$e(G)=\begin{cases}
			\frac{tn}{2}-\frac{t+2}{2},&\text{if}\ G\in {SM}^t_n( K_{t-2}\vee 2K_1) \ \text{with}\ n\equiv 1\ (\mathrm{mod}\ t-1)  \ \text{and} \ n\neq2t-1; \\
			\frac{tn}{2}-\frac{r(t-r+1)}{2},&\text{if}\ G\in \mathcal{SM}^t_n(K_r) \ \text{with}\  n\equiv r\ (\mathrm{mod}\ t-1);\\
			t^2-t,&\text{if}\ G\in \mathcal{SM}^t_n(K_1) \ \text{with} \ n=2t-1.
		\end{cases}$$

		Now we show (\ref{s11}) and (\ref{s12}) hold.
		Let $G$ be a connected $K^2_t$-saturated graph of order $n\geq t+2\geq7$, $G'$ be defined as Lemma \ref{l4}. 
	 The we  consider the following two cases.

			\textbf{Case 1.} $G'$ is $K^2_t$-saturated.
			
			 Then  $|V(G')|\geq t+2$ and $G'$ is $K_t$-saturated by Lemma \ref{l4}. By  Lemma \ref{l4}, Theorem \ref{e2} and $|V(G')|\geq t+2$, we have 
		\begin{align}
			e(G)\nonumber&\geq e(G')+\frac{t}{2}|V(G)\setminus V(G')|\\ \nonumber &\geq \frac{(t-2)(t-3)}{2}+(|V(G')|-(t-2))(t-2)+\frac{t}{2}(n-|V(G')|) \\ \nonumber
			&= \frac{tn}{2}+\frac{t-4}{2}|V(G')|-\frac{(t-2)(t-1)}{2} \\ \nonumber
			&\geq \frac{tn}{2}+\frac{t-10}{2} \\ \nonumber &>\begin{cases}
				\frac{tn}{2}-\frac{t+2}{2},&\text{if}\ n\equiv 1\ (\mathrm{mod}\ t-1)  \ \text{and} \ n\neq2t-1; \\
				\frac{tn}{2}-\frac{r(t-r+1)}{2},&\text{if}\ n\equiv r\ (\mathrm{mod}\ t-1)\ \text{with} \ 2\leq r\leq t-1;\\
				t^2-t,&\text{if}\ n=2t-1.
			\end{cases} \nonumber
		\end{align}
		

		
		\textbf{Case 2.} $G'$ is not $K^2_t$-saturated.
		
		\textbf{Subcase 2.1.} $V(G')$ is a clique.
		
		By Lemma \ref{l4}, we have that $G'$ is $K_t$-free, so $G'\cong K_{f}$ with $1\leq f\leq t-1$. By Lemma \ref{l4} and $t\geq5$, we have $e(G)\geq \frac{f(f-1)}{2}+\frac{t(n-f)}{2}=\frac{tn}{2}-\frac{f(t-f+1)}{2}$, and equality holds if and only if  every $K_t$ in $G$ is a type-I   $K_t$, which implies that    $ n\equiv f\pmod  {t-1}$. Specially, if $f=2$ and there exists $v_1\in V(G')$ such that   $p_1= 1$, then $G$ is not $K^2_t$-saturated, a contradiction. Thus $G\in \mathcal{SM}^t_{n}(K_{f})$. Clearly, 	if $f=1$ and $n\neq 2t-1$, then $e(G)=\frac{tn}{2}-\frac{t}{2}>\frac{tn}{2}-\frac{t+2}{2}$; if $f=1$ and $n= 2t-1$, then $G\in \mathcal{SM}^t_n(K_1)$ with $e(G)=t^2-t$.

		
		\textbf{Subcase 2.2.} $V(G')$ is not clique.

		Then there exist two vertices $u,v\in V(G')$ such that $uv\notin E(G')\subseteq E(G)$ and $G'+uv$ contains no copy of $K^2_t$. Since $G$ is $K^2_t$-saturated, $G+uv$ contains a copy of $K^2_t$, say $Q$. Now we show $u,v\in V(CQ)$.  Otherwise,  without loss of generality we may assume that $u\in V(CQ)$ and $d_Q(v)=1$. Then $G[ V(CQ)]\cong K_t$ and $u\in V(K_t)$. Thus $G[ V(CQ)]$ is a type-I  $K_t$ and $u$ is a lucky vertex by Definition \ref{d2} and $u\in V(G')$, which implies that $Q\not\cong K^2_t$, a contradiction. Therefore, we have $u,v\in V(CQ)$. Suppose that $V(CQ)=\{u,v,u_1,u_2,\ldots,u_{t-2}\}$.   
		
		\textbf{Claim 1.} $V(CQ)\subseteq V(G')$. 
		
		\begin{proof}
			Suppose to the contrary that  there exists $i\in [t-2]$ such that $u_i\in V(G)\setminus V(G')$. Then $u_i$ belongs to an unlucky $K_{t-1}$ or a type-II $K_t$, and thus there exists at most one vertex in $N(u_i)$ belonging to $V(G')$, which  contradicts $u,v\in N_G(u_i)\cap V(G')$.
				\end{proof}

			\textbf{Claim 2.} $ |V(G')|\leq t+1$. 
			
			\begin{proof}
					Suppose to the contrary that  $ |V(G')|\geq t+2$. Then   there exists a vertex $w_1\in V(G')\setminus V(CQ)$ such that $w_1\sim  V(CQ)$ since $G'$ is connected. Since $G'+uv$ contains no copy of $K^2_t$, we have one of the following two cases holds: (i)  there exists a unique vertex $v^*\in V(CQ)$ such that $v^*\sim V(G')\setminus V(CQ)$;  (ii) $w\not\sim V(CQ)$ for any $w\in  V(G')\setminus ( V(CQ)\cup\{w_1\})$.
					
					If (i) holds, then $v\not\sim V(G')\setminus V(CQ)$ or $u\not\sim V(G')\setminus V(CQ)$. We may assume that $u\not\sim V(G')\setminus V(CQ)$. Then  $G+uw_1$ contains  a copy of $K^2_t$, say $Q_1$. 
					
				
				If $u,w_1\in V(CQ_1)$, then we claim that $V(CQ_1)\cap V(G')\subseteq \{u,w_1,v^*\}$. Otherwise, there exists $v'\in V(CQ_1)\cap\big(V(G')\setminus\{u,w_1,v^*\}\big)$, and then $v'w_1, v'u\in E(G)$, which contradicts (i) if $v'\in V(CQ)$ or the fact that $u\not\sim V(G')\setminus V(CQ)$ if $v'\in V(G')\setminus V(CQ)$. 
				Thus there exists $w_2\in (V(G)\setminus V(G'))\cap V(CQ_1)$ since $t\geq 5$ and $V(CQ_1)\cap V(G')\subseteq \{u,w_1,v^*\}$, which implies that $u,w_1\in N(w_2)\cap V(G')$, contradicting the fact that at most one vertex in $V(G')$ is adjacent to $w_2$ by the definition of $G'$.
				
				If $d_{Q_1}(w_1)=1$ and $u\in V(CQ_1)$, then $G[V(CQ_1)]\cong K_t$, and thus $G[V(CQ_1)]$ is a type-I  $K_t$ by Proposition \ref{o2}, Definition \ref{d2} and $u\in V(G')$, which implies  only one vertex $u\in V(CQ_1)$ such that $u\sim V(G)\setminus V(CQ_1)$,  contradicting $Q_1\cong K^2_t$.
				
				If $d_{Q_1}(u)=1$ and $w_1\in V(CQ_1)$, then $G[V(CQ_1)]$ is a type-I  $K_t$ by similar arguments, which is also a contradiction.

				If (ii) holds, then $uw\notin E(G)$ and  $G+uw$ contains  a copy of $K^2_t$. By a similar argument, we also reach a contradiction.
				
				Combining the above arguments, Claim $2$ holds.
			\end{proof}

	    By Claim 1 and Claim 2, we have $ t\leq|V(G')|\leq t+1$. Without loss of generality, suppose that $G$ contains $b$ unlucky  $K_{t-1}$ and $c$ type-II   $K_t$, where  $b,c\geq0$ and $b+c\geq1$ since $n\geq t+2$. Now we consider  the following two subcases.
	    
	    	\textbf{Subcase 2.2.1} $|V(G')|=t$.
	    
	    Then $G'\cong K_{t-2}\vee 2K_1$ by Claim 1 and $n=t+b(t-1)+ct$, and thus there exist at least two vertices $v_i,v_j\in V(G')$ such that $p_i+q_i\neq0$ and $p_j+q_j\neq0$ since $G'+uv\cong K_t$ and $G+uv$ contains $Q$, which implies $b+c\geq2$ and $n>2t-1$. Moreover, we have 
	    \begin{align}\label{s1}
	    	e(G)= e(G')+b\frac{t(t-1)}{2}+c(\frac{t(t-1)}{2}+t-2)
	    	= \frac{tn}{2}-\frac{t+2}{2}+\frac{c(t-4)}{2}.  
	    \end{align}

	    If $c=0$,   then $e(G)= \frac{tn}{2}-\frac{t+2}{2}$ and $G\in \mathcal{SM}^t_{n}(K_{t-2}\vee2K_1 )$ with $n\equiv1 \pmod{t-1}$  and $n\neq 2t-1$. 
	    If $1\leq c\leq2$, then $n\equiv c+1\pmod{t-1}$  and  $e(G)>\max \{	\frac{tn}{2}-\frac{t+2}{2}, 	\frac{tn}{2}-\frac{r(t-r+1)}{2}\}$  since (\ref{s1}) and   $r\equiv c+1\pmod{t-1}$. If $c\geq3$, then $e(G)\geq\frac{tn}{2}-\frac{t-1}{2}>\max \{	\frac{tn}{2}-\frac{t+2}{2}, 	\frac{tn}{2}-\frac{r(t-r+1)}{2}\}$ with $2\leq r\leq t-1$ since (\ref{s1}) and $\frac{c(t-4)}{2}\geq \frac{3}{2}$.
	    
	    
	    \textbf{Subcase 2.2.2} $|V(G')|=t+1$.

	    Then $n=t+1+b(t-1)+ct>2t-1$ and there exists $w^*\in V(G')\setminus V(CQ)$ such that $V(G')=V(CQ)\cup \{w^*\}$, so \begin{align}\label{s2}
	    	e(G)\nonumber&= e(G')+b\frac{t(t-1)}{2}+c(\frac{t(t-1)}{2}+t-2)\\  &= (\frac{(t-2)(t-3)}{2}+2(t-2)+d_{G'}(w^*)) +b\frac{t(t-1)}{2}+c(\frac{t(t-1)}{2}+t-2)\nonumber \\ &= \frac{tn}{2}+\frac{c(t-4)}{2}+d_{G'}(w^*)-(t+1). 
	    \end{align}
	    
	    Now we show $d_{G'}(w^*)\geq t-2$. Otherwise, $d_{G'}(w^*)\leq t-3$, then there exists $i\in [t-2]$ such that $w^*u_i\notin  E(G)$. Let $Q_2$ be a copy of $K^2_t$ in $G+w^*u_i$. Then $d_{Q_2}(w^*)=1$, $d_{Q_2}(u_i)=1$, or $w^*,u_i\in V(CQ_2)$. If $d_{Q_2}(w^*)=1$, then $u_i\in V(CQ_2)$ and $G[V(CQ_2)]\cong K_t$, and thus $u_i$ is  a lucky vertex since $u_i\in V(G')$, which  contradicts  $Q_2\cong K^2_t$. Similarly, if $d_{Q_2}(u_i)=1$, we also obtain a contradiction. If $w^*,u_i\in V(CQ_2)$, then there exists $w'\in V(CQ_2)\cap (V(G)\setminus V(G'))$ since $d_{G'}(w^*)\leq t-3$, and thus $w'$ lies  in an unlucky $K_{t-1}$ or a type-II $K_t$, which  contradicts  $w^*,u_i\in N(w')\cap V(G')$. Combining the above arguments, we have $d_{G'}(w^*)\geq t-2$.

	    In the following, we show  $e(G)>\max \{	\frac{tn}{2}-\frac{t+2}{2}, 	\frac{tn}{2}-\frac{r(t-r+1)}{2}\}$ with $2\leq r\leq t-1$.
	    
	    If $d_{G'}(w^*)\geq t-1$, then $e(G)\geq \frac{tn}{2}-2>\max \{	\frac{tn}{2}-\frac{t+2}{2}, 	\frac{tn}{2}-\frac{r(t-r+1)}{2}\}$ since (\ref{s2}) and $t\geq5$. 
	    
	    If $d_{G'}(w^*)= t-2$ and $0\leq c\leq 1$, then  $n\equiv c+2\pmod{t-1}$ and 
	    $e(G)=\frac{tn}{2}+\frac{c(t-4)}{2}-3>\max \{	\frac{tn}{2}-\frac{t+2}{2}, 	\frac{tn}{2}-\frac{r(t-r+1)}{2}\}$ since (\ref{s2}), $r\equiv c+2\pmod{t-1}$  and $t\geq5$. 
	    
	    If $d_{G'}(w^*)= t-2$ and $ c\geq 2$, then $e(G)\geq \frac{tn}{2}-2>\max \{	\frac{tn}{2}-\frac{t+2}{2}, 	\frac{tn}{2}-\frac{r(t-r+1)}{2}\}$ with $2\leq r\leq t-1$ since (\ref{s2}) and $t\geq5$.
	    
	    By Case 1 and Case 2, both (\ref{s11}) and (\ref{s12}) hold. 	    $\hfill\blacksquare$
	    
	    
	    
	    
	  \vspace{0.2cm}
	  
	  Based on the result of Theorem \ref{t3}, we now proceed to the proof of Theorem \ref{t4}.

	  	\noindent\textbf{\textit{Proof of Theorem \ref{t4}.}}	If $t\equiv 0\ (\mathrm{mod}\ 2)$, $H\in \mathcal{SM}^t_{n-\beta (t+1)}(K_{\gamma})\cup \beta K_{t+1}$ with
	  	
	  	$$(\beta,\gamma)=\begin{cases}
	  		(\frac{2f-t}{4}, \frac{t}{2}),&\text{if}\ f\geq \frac{t}{2}+2 \ \text{and}\ f+\frac{t}{2}\equiv 0\ (\mathrm{mod}\ 2); \\
	  		(\frac{2f-t-2}{4}, \frac{t+2}{2}),&\text{if}\ f\geq \frac{t}{2}+2 \ \text{and}\ f+\frac{t}{2}\equiv 1\ (\mathrm{mod}\ 2); \\
	  		(\frac{t}{2}, \frac{t}{2}),&\text{if}\ f= \frac{t}{2}+1; \\
	  		(\frac{t-2}{2}, \frac{t+2}{2}),&\text{if}\ f= \frac{t}{2}; \\
	  		(\frac{2f+t-4}{4}, \frac{t+2}{2}),&\text{if}\ f\leq \frac{t}{2}-1 \ \text{and}\ f+\frac{t}{2}\equiv 0\ (\mathrm{mod}\ 2); \\
	  		(\frac{2f+t-2}{4}, \frac{t}{2}),&\text{if}\ f\leq \frac{t}{2}-1 \ \text{and}\ f+\frac{t}{2}\equiv 1\ (\mathrm{mod}\ 2), \\
	  	\end{cases}$$
	  	then $H$ is a $K^2_t$-saturated graph of order $n$ and 
	  	$e(H)=\frac{tn}{2}-\frac{t(t+2)}{8}$.

	  	If $t\equiv 1\ (\mathrm{mod}\ 2)$, $H\in \mathcal{SM}^t_{n-\beta (t+1)}(K_{\gamma})\cup \beta K_{t+1}$ with
	  	
	  	$$(\beta,\gamma)=\begin{cases}
	  		(\frac{2f-t-1}{4}, \frac{t+1}{2}),&\text{if}\ f\geq \frac{t+1}{2}+1 \ \text{and}\ f+\frac{t+1}{2}\equiv 0\ (\mathrm{mod}\ 2); \\
	  		(\frac{2f-t+1}{4}, \frac{t-1}{2}),&\text{if}\ f\geq \frac{t+1}{2}+1 \ \text{and}\ f+\frac{t+1}{2}\equiv 1\ (\mathrm{mod}\ 2); \\
	  		(\frac{t-1}{2}, \frac{t+1}{2}),&\text{if}\ f= \frac{t+1}{2}; \\
	  		(\frac{2f+t-3}{4}, \frac{t+1}{2}),&\text{if}\ f\leq \frac{t+1}{2}-1 \ \text{and}\ f+\frac{t+1}{2}\equiv 0\ (\mathrm{mod}\ 2); \\
	  		(\frac{2f+t-5}{4}, \frac{t+3}{2}),&\text{if}\ f\leq \frac{t+1}{2}-1 \ \text{and}\ f+\frac{t+1}{2}\equiv 1\ (\mathrm{mod}\ 2), \\
	  	\end{cases}$$
	  	then $H$ is a $K^2_t$-saturated graph of order $n$ and $$e(H)=\begin{cases}
	  		\frac{tn}{2}-\frac{(t+1)^2}{8},&\text{if}\  f+\frac{t+1}{2}\equiv 0\ (\mathrm{mod}\ 2); \\
	  		\frac{tn}{2}-\frac{(t+3)(t-1)}{8},&\text{if}\  f+\frac{t+1}{2}\equiv 1\ (\mathrm{mod}\ 2). \\
	  	\end{cases}$$
	  	
	  	We claim that $H$ is a well-defined graph in each case. We only present the detailed proof for the case  $t\equiv 0\ (\mathrm{mod}\ 2)$, $f\geq \frac{t}{2}+2$ and $ f+\frac{t}{2}\equiv 0\ (\mathrm{mod}\ 2)$, and the remaining cases follow similarly. Clearly, $n-\beta (t+1)=f+k_1(t-1)-\frac{2f-t}{4}(t+1)=\frac{t}{2}+(k_1-\frac{2f-t}{4})(t-1)$ and both $\frac{2f-t}{4}$ and $k_1-\frac{2f-t}{4}$  are positive integers since $f\geq \frac{t}{2}+2 $, $ f+\frac{t}{2}\equiv 0\ (\mathrm{mod}\ 2)$ and $n=f+k_1(t-1)\geq \frac{(t+3)(t-1)+t+1}{2}$. Therefore, $H$ is a well-defined graph if $H\in \mathcal{SM}^t_{n-\beta (t+1)}(K_{\gamma})\cup \beta K_{t+1}$.

	  	Combining the above arguments, we have $\mathrm{sat}(n,K^2_t)\leq g(n,t)$.

	  	Let $G$ be a minimum $K^2_t$-saturated graph of order $n$. Then \begin{equation}\label{s7}
	  		e(G)= \mathrm{sat}(n,K^2_t)\leq g(n,t).
	  	\end{equation} 
	  	
	  	Now we  show $e(G)\geq g(n,t)$
	  	by considering the following two cases. 
	  	
	  	\textbf{Case 1.} $G$ is connected.
	  	
	  	By Theorem \ref{t3} and (\ref{s7}), we have $G\in \mathcal{SM}^t_{n}(K_f)$ with \begin{equation}
	  		f=\begin{cases}
	  			\frac{t}{2} \ \text{or} \ \frac{t+2}{2}, &\text{if} \ t \ \text{is even};\\
	  			\frac{t-1}{2}, \ \frac{t+1}{2},  \ \text{or} \ \frac{t+3}{2}, &\text{if} \ t \ \text{is odd},\nonumber
	  		\end{cases}
	  	\end{equation}
	  	and thus $e(G)\geq g(n,t)$.
	  	
	  	\textbf{Case 2.} $G$ is disconnected.
	  	
	  	Then $G\cong \bigcup\limits_{i=1}^{l}G_i$, where $l\geq2$ and each $G_i$ is  connected. By Lemma \ref{l13},  we have $G\cong G_1\cup (l-1) K_{t+1}$, where $G_1$  is either a $K^2_t$-saturated graph,    or isomorphic to 
	  	$K_{f'}$ with $1\leq f'\leq t+1$.
	  	
	  	\textbf{Subcase 2.1.} $G_1\cong K_{f'}$ with $1\leq f'\leq t+1$.
	  	
	  	Clearly, $e(G)=\frac{tn}{2}-\frac{f'(t-f'+1)}{2}$. By (\ref{s7}), we have  \begin{equation}
	  		f'=\begin{cases}
	  			\frac{t}{2} \ \text{or} \ \frac{t+2}{2}, &\text{if} \ t \ \text{is even};\\
	  			\frac{t-1}{2}, \ \frac{t+1}{2},  \ \text{or} \ \frac{t+3}{2}, &\text{if} \ t \ \text{is odd},\nonumber
	  		\end{cases}
	  	\end{equation}
	  	and thus $e(G)\geq g(n,t)$.
	  	
	  	\textbf{Subcase 2.2. } $G_1$  is $K^2_t$-saturated.
	  	
	  	Let $|V(G_1)|\equiv r'\ (mod \ t-1)$ with $1\leq r'\leq t-1$. Then 
	  	
	  	\begin{equation}
	  		e(G_1)\geq\begin{cases}
	  			\frac{t|V(G_1)|}{2}-\frac{t+2}{2},&\text{if}\ r'=1 \ \text{and} \ |V(G_1)|\neq2t-1; \\
	  			\frac{t|V(G_1)|}{2}-\frac{r'(t-r'+1)}{2},&\text{if}\  2\leq r'\leq t-1;\\
	  			t^2-t,&\text{if} \ |V(G_1)|=2t-1\nonumber
	  		\end{cases}
	  	\end{equation} 
	  	by Theorem \ref{t3}. 
	  	Thus we have 
	  	\begin{equation}\label{s5}
	  		e(G)\geq\begin{cases}
	  			\frac{tn}{2}-\frac{t+2}{2},&\text{if}\ r'=1 \ \text{and} \ |V(G_1)|\neq2t-1; \\
	  			\frac{tn}{2}-\frac{r'(t-r'+1)}{2},&\text{if}\  2\leq r'\leq t-1;\\
	  			\frac{tn}{2}-\frac{t}{2},&\text{if} \ |V(G_1)|=2t-1,
	  		\end{cases}
	  	\end{equation}
	  	
	  	and equality holds if and only if \begin{equation}\label{s6}
	  		G\in \begin{cases}
	  			\mathcal{SM}^t_{|V(G_1)|}(2K_1\vee K_{t-2})\cup \frac{n-|V(G_1)|}{t+1}K_{t+1},\ \text{if}\ r'=1 \ \text{and} \ |V(G_1)|\neq2t-1; \\
	  			\mathcal{SM}^t_{|V(G_1)|}(K_{r'})\cup \frac{n-|V(G_1)|}{t+1}K_{t+1}, \ \text{if}\ 2\leq r'\leq t-1;\\
	  			\mathcal{SM}^t_{|V(G_1)|}(K_1)\cup \frac{n-|V(G_1)|}{t+1}K_{t+1}, \ \text{if}\ |V(G_1)|=2t-1
	  		\end{cases}
	  	\end{equation}
	  	by Theorem \ref{t3}.

	  	By (\ref{s7})--(\ref{s6}) and $t\geq5$, we have $G\in \mathcal{SM}^t_{|V(G_1)|}(K_{r'})\cup \frac{n-|V(G_1)|}{t+1}K_{t+1}$ with $\frac{t-1}{2}\leq r'\leq \frac{t+3}{2}$.

	  	If $t$ is even, then $r'\neq \frac{t+1}{2}$, and thus $e(G)\geq \frac{tn}{2}-\frac{t(t+2)}{8}$. 
	  	
	  	If $t$ is odd and $f$ has the same parity as $\frac{t+1}{2}$, then $e(G)\geq \frac{tn}{2}-\frac{(t+1)^2}{8}$.
	  	
	  	If $t$ is odd and $f$  has different parity from $\frac{t+1}{2}$, then $r'\neq \frac{t+1}{2}$, and thus $e(G)\geq \frac{tn}{2}-\frac{(t+3)(t-1)}{8}$. 
	  	
	  	Combining the above arguments,	we complete the proof.    $\hfill\blacksquare$

	\section{The saturation number of $K^3_3$}

	\begin{lemma}\label{l1}
		Let $G$ be a connected $K^3_3$-saturated graph. Then $2\leq diam(G)\leq 4$.
	\end{lemma}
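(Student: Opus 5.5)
The lower bound $diam(G)\geq 2$ is the easy direction. If $diam(G)\le 1$, then $G$ is complete, and a complete graph is $K^3_3$-saturated only vacuously. Since $G$ is $K^3_3$-saturated, $|V(G)|\geq 6$ by the convention at the start of Section 2. The complete graph $K_n$ with $n\ge 6$ contains $K^3_3$: take a triangle and three further vertices as pendant neighbours. So $G$ is not complete, and $diam(G)\ge 2$.

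For the upper bound I argue by contradiction. Suppose $diam(G)\ge 5$, and pick $x,y$ with $d(x,y)=5$, joined by a shortest path $x=x_0x_1x_2x_3x_4x_5=y$. I will add a non-edge whose endpoints are far apart and check that no copy of $K^3_3$ can appear. The natural choice is $e=x_1x_4$, or more symmetrically $e=x_0x_3$ or $e=x_2x_5$, and I need to decide which one works.

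The key observation is this. If $G+uv$ contains a copy $Q$ of $K^3_3$, then $uv$ lies in $Q$, so either $uv$ is a pendant edge of $Q$ or it is an edge of the central triangle.
\begin{itemize}
\item If $uv$ is a triangle edge, then $u$ and $v$ have a common neighbour in $G$, so $d_G(u,v)\le 2$.
\item If $uv$ is a pendant edge, say with $u$ on the triangle, then $u$ already lies in a triangle $T$ of $G$. The other two vertices of $T$ must have distinct pendant neighbours outside $T\cup\{v\}$.
\end{itemize}
Now choose $u,v$ with $d_G(u,v)\ge 3$. Then the first option is impossible, so $uv$ must be a pendant edge of $Q$, with one endpoint lying in a triangle of $G$ that becomes a $K^3_3$ once the other endpoint is hung on it.

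So the refined plan is the following. Take $u=x_0$ and $v=x_3$, where $d(x_0,x_3)=3$. Saturation forces either
\begin{itemize}
\item $x_0$ to lie in a triangle $T_0$ whose other two vertices have distinct outside neighbours (excluding $x_3$), i.e.\ $T_0$ plus $x_3$ forms a $K^3_3$; or
\item $x_3$ to lie in such a triangle $T_3$.
\end{itemize}
Apply the same reasoning to further pairs at distance at least $3$ along the path: $(x_0,x_4)$, $(x_0,x_5)$, $(x_1,x_4)$, $(x_1,x_5)$, $(x_2,x_5)$. This produces many triangles, each with two "decorated" vertices.

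The goal is to derive that $G$ already contains $K^3_3$. Suppose a vertex $w$ lies in a triangle $T=\{w,a,b\}$ in which $a$ and $b$ have distinct private neighbours $a',b'\notin T$. If $w$ also has any neighbour outside $T\cup\{a',b'\}$, then $G$ contains $K^3_3$. In the path setting, $x_1$ has the neighbours $x_0$ and $x_2$, so I will use such path neighbours to serve as the third pendant.

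\textbf{Main obstacle.} The delicate point is the case where the candidate third pendant coincides with $a'$ or $b'$, or lies inside $T$. To handle this I will exploit the distance constraints. A triangle containing $x_0$ lies within distance $1$ of $x_0$, so it is far from $x_4$ and $x_5$. Consequently its vertices cannot coincide with neighbours of $x_4$ or $x_5$, and the pendants can be chosen disjointly. I expect this bookkeeping to require a case split on which endpoint of each added edge carries the triangle.
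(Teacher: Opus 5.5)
Your lower bound is fine, and the two observations you make for the upper bound are correct; they are the tools the paper uses. When $d_G(u,v)\ge 3$ the new edge must be a pendant edge of $Q$, and the centre vertex $w$ carrying it satisfies $N_G(w)\subseteq\{a,b,a',b'\}$, since any other neighbour of $w$ could replace the far endpoint.

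The upper bound itself, however, is not proved. Everything after these observations is deferred, and the remedy you suggest for the ``main obstacle'' cannot work. The obstruction is not an accidental coincidence that distance constraints rule out; it is a configuration that is consistent locally. For example, suppose some $u\notin V(P)$ has $N(u)=\{x_2,x_3,x_4\}$ and $N(x_3)=\{x_2,x_4,u\}$. Then $G+x_0x_3$ contains $K^3_3$ (centre $\{x_3,x_4,u\}$, pendants $x_0,x_5,x_2$), and no distance along $P$ is shortened. Yet neither triangle $\{x_2,x_3,u\}$ nor $\{x_3,x_4,u\}$ extends to a $K^3_3$ in $G$, because $x_3$ and $u$ have the same single neighbour outside each of them. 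Similarly, a triangle at the endpoint $x_0$ that contains $x_1$, or uses it as a pendant, gives no contradiction on its own. Your remark that such a triangle is far from $x_4,x_5$ only shows that $Q$ can exist. So no single added edge, and no local disjointness argument, produces a $K^3_3$ inside $G$.

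The missing ingredients are a classification of the interior case and a way to combine several added edges. The paper first proves (Claim A) what happens when the far endpoint is hung on an interior vertex $x_j$.
\begin{enumerate}
\item The centre cannot meet $P$ only in $x_j$. The swap would force $x_{j-1},x_{j+1}$ to be the pendants of the two off-path centre vertices, and then $x_{j-2}$ or $x_{j+2}$ completes a $K^3_3$ in $G$.
\item So the centre is $\{x_j,x_{j\pm1},u\}$, and the swap forces $u$ to be adjacent to the other path neighbour of $x_j$ as well.
\item This gives a vertex $u\notin V(P)$ adjacent to $x_{j-1},x_j,x_{j+1}$, and $K^3_3$-freeness gives $d(u)=d(x_j)=3$.
\end{enumerate}

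The paper then applies this three times.
\begin{enumerate}
\item From $G+x_1x_4$ (hung at $x_1$ by symmetry) it gets such an ear at $x_1$.
\item This ear forces $x_0$ to be the pendant vertex in $G+x_0x_3$ and in $G+x_0x_4$. Otherwise either $x_1\notin V(Q)$ and $x_1$ replaces the far endpoint, or the centre is $\{x_0,x_1,u\}$, where $x_1$ and $u$ have only $x_2$ left as an outside neighbour.
\item Hence these two edges give ears at $x_3$ (with $d(x_3)=3$) and at $x_4$. The two ears are distinct, since one is adjacent to $x_2$ and the other to $x_5$, and both are adjacent to $x_3$.
\end{enumerate}
Therefore $d(x_3)\ge 4$, a contradiction. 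The contradiction is this degree count, and it only appears after combining three added edges.
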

	
	 \begin{proof}
	 	Clearly, we have $ diam(G)\geq2$. Next, we show $diam(G)\leq 4$.
	 	
	 	Suppose to the contrary that $diam(G)\geq 5$. Then there exists a path $P=v_1v_2v_3v_4v_5v_6$  of $G$ with $d(v_1,v_6)=5$.
	 	
	 	\textbf{Claim A.} Let $v_i,v_j\in V(P)$, $d(v_i,v_j)\geq3$, $Q$ be a copy of  $K^3_3$ in $G+v_iv_j$, $d_Q(v_i)=1$, $v_j\in V(CQ)$ with $d_P(v_j)=2$. Then there exists $u\in V(G)\setminus V(P)$ such that $uv_{j-1},uv_{j},uv_{j+1}\in E(G)$ and $d(u)=d(v_j)=3$.
	 	
	 	\begin{proof}
	 		Since $d(v_1,v_6)=5$, we have $V(CQ)\cap V(P)=\{v_j\}$ or $\{v_j, v_p\}$, where  $p\in \{j-1,j+1\}$.
	 		
	 		If $V(CQ)\cap V(P)=\{v_j\}$, then there exist $u_1,u_2\in V(G)\setminus V(P)$ such that $V(CQ)=\{v_j, u_1,u_2\}$. Moreover, we have $v_{j-1}, v_{j+1}\in V(Q)$. Otherwise, there exists $k\in \{j-1,j+1\}$ such that $v_k\notin V(Q)$, then  $G[(V(Q)\setminus\{v_i\})\cup \{v_k\}]$ contains  a copy of $K^3_3$, a contradiction. Based on the fact $v_{j-1}, v_{j+1}\in V(Q)$, we have  $G[\{u_1,u_2,v_j,v_{j-1},v_{j+1},v_{j-2}\}]$ contains  a copy of $K^3_3$ (if $j-i\geq3$), or $G[\{u_1,u_2,v_j,v_{j-1},v_{j+1},v_{j+2}\}]$ contains  a copy of $K^3_3$ (if $i-j\geq3$), a contradiction.
	 		
	 		If $V(CQ)\cap V(P)=\{v_j, v_p\}$, then there exists $u\in V(G)\setminus V(P)$ such that $V(CQ)=\{v_j, v_p,u\}$. If $uv_{q}\notin E(G)$ for $q\in \{j-1,j+1\}\setminus \{p\}$, then $v_q\notin V(Q)$, which implies that $G[(V(Q)\setminus\{v_i\})\cup \{v_q\}]$ contains  a copy of $K^3_3$, a contradiction. Therefore, $uv_{q}\in E(G)$, and $uv_{j-1},uv_{j},uv_{j+1}\in E(G)$ holds since $\{p,q\}=\{j-1,j+1\}$.

	 		 Clearly, we have $d(u)=d(v_j)=3$. Otherwise, $G$ contains  a copy of $K^3_3$, a contradiction.
	 		We complete the proof of the Claim A.
	 	\end{proof}

	 	Let $Q_1$ be a copy of $K^3_3$ in $G+v_2v_5$. Then $d_{Q_1}(v_2)=1$ or $d_{Q_1}(v_5)=1$ since $d(v_2,v_5)=3$. Without loss of generality, let $d_{Q_1}(v_5)=1$. Then $v_2\in V(CQ_1)$ with $d_P(v_2)=2$, and thus there exists $u_3\in V(G)\setminus V(P)$ such that $u_3v_1,u_3v_2,u_3v_3\in E(G)$ and $d(u_3)=d(v_2)=3$ by Claim A.
	 	
	 	 Let $Q_2$ be a copy of $K^3_3$ in $G+v_1v_4$. We claim  $d_{Q_2}(v_1)=1$. Otherwise, $v_1\in V(CQ_2)$. If $V(CQ_2)\cap \{v_1,v_2,u_3\}=\{v_1\}$,   then $G[(V(Q_2)\setminus\{v_4\})\cup \{v_2\}]$ contains  a copy of $K^3_3$ since $v_1v_4\in E(Q_2)$ and $v_2\notin V(Q_2)$ with $N(v_2)=\{v_1,v_3,u_3\}$, a contradiction. If $V(CQ_2)\cap \{v_1,v_2,u_3\}=\{v_1,v_2\}$, $\{v_1,u_3\}$ or $\{v_1,v_2, u_3\}$, then $d(v_2)\geq 4$ or $d(u_3)\geq 4$, a contradiction. Then $d_{Q_2}(v_1)=1$ and $v_4\in V(CQ_2)$ with $d_P(v_4)=2$, and thus there exists $u_4\in V(G)\setminus (V(P)\cup\{u_3\})$ such that $u_4v_3,u_4v_4,u_4v_5\in E(G)$ and $d(u_4)=d(v_4)=3$ by Claim A.

	 	  Let $Q_3$ be a copy of $K^3_3$ in $G+v_1v_5$. Similar the proof of $G+v_1v_4$, we have $d_{Q_3}(v_1)=1$ and $v_5\in V(CQ_3)$ with $d_P(v_5)=2$.
	 	   Then there exists  $u_5\in V(G)\setminus (V(P)\cup\{u_3,u_4\})$ such that $u_5v_4,u_5v_5,u_5v_6\in E(G)$ by Claim A. 
	 	 
	 		Combining the above arguments, we have $\{v_3,v_5,u_4,u_5\}\subseteq N(v_4)$ and $d(v_4)\geq4$, which  contradicts $d(v_4)=3$. Therefore,  $2\leq diam(G)\leq 4$.
	 \end{proof}
	 
	 We note that a connected $K^3_3$-free graph of order at least 6 is also $K_1\vee P_4$-free. Then we immediately have the following conclusion.
	 
	 \begin{Observation}\label{o}
	 	Let $G$ be a connected $K^3_3$-saturated graph of order $n\geq6$. Then $P_4$ is not a subgraph of $G[N(v)]$ for any vertex $v\in V(G)$.
	 \end{Observation}
	
Let $G$ be a graph, $U\subseteq V(G)$, we use $G-U $ to denote a new graph from $G$ by removing   all vertices in $U$  from  $G$. For any non-empty vertex set $N_0\subset V(G)$, we denote $N_1=\{v\in V(G)\mid v\sim N_0 \}\setminus N_0$, $N_i=\{v\in V(G)\mid v\sim N_{i-1} \}\setminus (N_{i-1}\cup N_{i-2})$, where $i\geq 2$.

\begin{theorem}\label{l2}
	Let $G$ be a connected $K^3_3$-saturated graph of order $n\geq6$, $\delta(G)=1$. Then $G\cong K_1\vee(K_1\cup \frac{n-2}{3}K_3)$ with $n\equiv  2 \ (mod \ 3)$, and $e(G)=2n-3> \frac{3n}{2}$ with $n\geq8$.
\end{theorem}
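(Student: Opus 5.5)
Let $x$ be a vertex of degree $1$ with unique neighbour $w$. The plan is to pin down the structure of $G-\{x,w\}$ by looking at the non-edges $xy$, and then to read off the global structure.

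\textbf{Step 1: every $y\ne w$ is adjacent to $w$.} Take $y\in V(G)\setminus\{x,w\}$; then $xy\notin E(G)$ and $G+xy$ contains a copy $Q$ of $K^3_3$. Since $d_{G+xy}(x)=2$, the vertex $x$ cannot be a centre vertex of $Q$ carrying a pendant, except possibly with its two neighbours $w,y$ as centre partners. That would need $wy\in E(G)$ together with three distinct pendants, one at each of $x,w,y$. So either $x$ is the leaf attached to $y$, or $x\in V(CQ)$ with $wy\in E(G)$. In the first case the centre triangle $T\ni y$ of $Q$ already lies in $G$, with pendants at its other two vertices. Then $y$ must have no neighbour outside $V(Q)$ in $G$, for otherwise $G$ itself contains $K^3_3$. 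I would use this to show that every vertex not adjacent to $w$ has degree at most $2$. I then expect a short contradiction by applying saturation to a second non-edge, for instance $xy'$ with $y'$ a pendant vertex of $Q$, or using connectivity together with Lemma \ref{l1}. So I plan to prove $N(w)=V(G)\setminus\{w\}$, i.e.\ $w$ is a dominating vertex.

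\textbf{Step 2: the structure of $H=G-\{x,w\}$.} Since $w$ dominates, Observation \ref{o} applied to $w$ shows that $G[N(w)]=\{x\}\cup H$ contains no $P_4$. I then rule out the remaining forms of $P_4$-free components. A component of $H$ containing a triangle together with a further vertex gives a $K^3_3$ centred on that triangle: $w$ supplies one pendant and other vertices supply the rest. The same happens for a star $K_{1,k}$ with $k\ge3$ or a $K_4$, using $w$, $x$ and the remaining vertices of $H$ as leaves. So I expect each component of $H$ to be $K_1$, $K_2$, $K_{1,2}$ or $K_3$. Saturation then forces each component to be a $K_3$:
\begin{itemize}
\item adding an edge between two non-triangle components, or inside a $K_{1,2}$, must not create $K^3_3$ by itself, and one checks that it does not, which contradicts saturation;
\item adding an edge from a $K_3$-component to another vertex creates a centre triangle with three pendants supplied by $w$, $x$ and the new neighbour, as required.
\end{itemize}
I also need to exclude $H$ containing isolated vertices: an isolated vertex $z$ of $H$ has degree $1$ in $G$, and the edge $xz$ must create $K^3_3$. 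Here the centre would be $w,x,z$, which needs three pendants at distinct vertices, and $x,z$ have no other neighbours, so $G+xz$ is $K^3_3$-free; this excludes isolated vertices. Hence $H\cong \frac{n-2}{3}K_3$, giving $n\equiv 2\pmod 3$ and $G\cong K_1\vee(K_1\cup\frac{n-2}{3}K_3)$.

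\textbf{Step 3: verification and count.} I then check that this graph is $K^3_3$-free. Any triangle uses $w$ and lies inside $\{w\}\cup T$ for a triangle component $T$, and the two vertices of $T$ on the centre have no neighbours outside $\{w\}\cup T$, so they cannot both carry pendants. The edge count is $e(G)=(n-1)+(n-2)=2n-3$, and $2n-3>\frac{3n}{2}$ holds exactly when $n>6$, i.e.\ for $n\ge 8$ in this residue class.

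\textbf{Main obstacle.} I expect Step 1 to be the hardest part, namely the case where $x$ appears as a pendant leaf attached to $y$. It needs careful case analysis to show that such a $y$ cannot exist without producing either a $K^3_3$ in $G$ or an unsaturated non-edge. I would try first to show that all such $y$ together with their triangles form a rigid local configuration, and then to contradict saturation on a non-edge from $x$ to a vertex of that triangle.
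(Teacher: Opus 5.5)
Your proposal has a genuine gap exactly where you flag it. Step 1, that $w$ dominates $G$, carries essentially all the difficulty of the theorem, and the proposal gives only a plan for it.

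From the copy $Q$ in $G+xy$ you correctly get two facts. First, $x$ is the pendant at $y$; in fact $x$ can never lie in the centre, since every centre vertex of $K^3_3$ has degree $3$. Second, $N(y)\subseteq\{a,b,a',b'\}$. But that only gives $d(y)\le 4$. Nothing in the proposal derives ``$d(y)\le 2$ for $y\notin N(w)$'', and the promised contradiction is never produced.

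The paper handles this by layering from $x$. If $w$ is not dominating, pick $v_2\in N(w)$ with a neighbour $u_1$ satisfying $d(w,u_1)=2$. Then analyse the non-edge $xv_2$, not $xy$ for a far vertex $y$. Here $x$ is the pendant at $v_2$, and the centre is either $\{v_2,w,v_5\}$ or $\{v_2,v_3,v_4\}$. Splitting further on whether the remaining vertices lie in $N(w)$ or at distance $2$ from $w$, one of two things happens:
\begin{itemize}
\item $G$ already contains $K^3_3$. You see this by replacing the pendant $x$ by $w$ or by $u_1$, or by using the triangle $\{w,v_2,v_4\}$ with pendants $x,u_1,v_3$.
\item $K^3_3$-freeness forces two degree-$3$ vertices adjacent to $u_1$. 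The copy of $K^3_3$ in $G+xu_1$ then yields one in $G$ after swapping $x$ for one of them.
\end{itemize}
Some case analysis of this kind is unavoidable; there is no ready-made short contradiction.

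Step 2 also contains two false claims.
\begin{itemize}
\item A star component $K_{1,k}$ with $k\ge 3$ does \emph{not} create $K^3_3$ in $G$. Every triangle meeting it is $\{w,c,\ell_i\}$ with $N(\ell_i)=\{w,c\}$, so $\ell_i$ never receives a pendant.
\item Adding an edge between two non-triangle components of order at least $2$ \emph{does} create $K^3_3$. For $K_2$-components $\{a,b\}$ and $\{c,d\}$, the graph $G+ac$ contains the centre $\{w,a,c\}$ with pendants $x,b,d$.
\end{itemize}
So your argument never excludes $K_2$-components or large stars; a single $K_2$ sitting among triangles is never touched.

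The correct tool, used in the paper, is the non-edge $xu$, where $u$ is the centre of a star component, either end of a $K_2$, or an isolated vertex of $H$. Since $x$ must be the pendant at $u$, a $K^3_3$ would need a triangle through $u$ whose other two vertices have distinct further neighbours. But every triangle through $u$ is $\{w,u,\ell\}$ with $N(\ell)\subseteq\{w,u\}$. This removes all non-triangle components at once and leaves $H\cong\frac{n-2}{3}K_3$.
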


\begin{proof}
	Let $v\in V(G)$ such that $d(v)=\delta(G)=1$, $N(v)=\{v_1\}$, $N_0=\{v\}$. Then  $V(G)=\bigcup\limits_{i=0}^{\varepsilon(v)}N_i$, and $2\leq \varepsilon(v)\leq 4$ by Lemma \ref{l1}. Firstly, we prove $\varepsilon(v)=2$.
	
	 	Suppose to the contrary that $\varepsilon(v)\geq 3$. Then there exists $v_2\in N_2, u_1\in N_3$ such that $v_2u_1\in E(G)$. Let $Q_1$ be a copy of $K^3_3$ in $G+vv_2$. Then $d_{Q_1}(v)=1$ and $v_2\in V(CQ_1)$.
	 	
	 	\textbf{Case 1.} $v_1\notin V(CQ_1)$.
	 	
	 	Suppose that $V(CQ_1)=\{v_2,v_3,v_4\}$. Then $v_3,v_4\in N_2\cup N_3$.
	 	
	 	 		If $v_3,v_4\in N_3$, then $G[(V(Q_2)\setminus\{v\})\cup \{v_1\}]$ contains  a copy of $K^3_3$, a contradiction.
	 	 		
	 	 		 	If $v_3,v_4\in N_2$, then $G[\{v_1,v_2,v_4,v,u_1,v_3\}]$ contains  a copy of $K^3_3$, a contradiction.
	 	
	 	If $v_3\in N_2$ and $v_4\in N_3$, then $d(v_2)=d(v_3)=3$ since $G$ is $K^3_3$-free. Otherwise, $G$ contains  a copy of $K^3_3$, a contradiction. Let $Q_2$ be a copy of $K^3_3$ in $G+vv_4$. Then $d_{Q_2}(v)=1$, $v_2,v_3\notin V(CQ_2)$, and thus $v_2,v_3\notin V(Q_2)$, which implies that $G[(V(Q_2)\setminus \{v\})\cup \{v_2\}]$ contains  a copy of $K^3_3$, a contradiction.

	 		\textbf{Case 2.} $v_1\in V(CQ_1)$.
	 		
	 	Suppose that $V(CQ_1)=\{v_1,v_2,v_5\}$, then $v_5\in N_2$ and  there exists $v_6\in V(Q_1)\setminus V(CQ_1)$ such that $v_6v_5\in E(Q_1)$.
	 		
	 		If $v_6\in N_2$ or $v_6\in N_3$ with $v_6v_2\notin E(G)$, then  $G[(V(Q_1)\setminus\{v\})\cup \{u_1\}]$ contains  a copy of $K^3_3$, a contradiction.
	 		
	 		If $v_6\in N_3$ with $v_6v_2\in E(G)$, then $d(v_2)=d(v_5)=3$ and $v_6$ is $u_1$.  Let $Q_3$ be a copy of $K^3_3$ in $G+vv_6$. Then $d_{Q_3}(v)=1$ and $v_6\in V(CQ_3)$, and thus $v_2,v_5\notin V(CQ_3)$ since $d(v_2)=d(v_5)=3$ and $N(v_2)\setminus\{v_5\}=N(v_5)\setminus\{v_2\}$, which implies that $v_2,v_5\notin V(Q_3)$. Therefore, $G[(V(Q_3)\setminus\{v\})\cup \{v_2\}]$ contains  a copy of $K^3_3$, a contradiction.

	 		Combining the above arguments, we have $\varepsilon(v)=2$.

	 		 By $\varepsilon(v)=2$ and $d(v)=1$, we have $d(v_1)=n-1$. Then each connected component of $G-v_1$ is $K_1$, $K_{1,r}$ with $r\geq1$, or $K_3$ by Observation \ref{o}. If some connected component $H_1$ of $G-\{v_1\}$ is $K_{1,r}$, then $G+vu$ contains no  copy of $K^3_3$, where $d_{H_1}(u)=r$, a contradiction. Similarly, there exists exactly one
	 		 connected component $H_2$ of $G-\{v_1\}$ that is $K_1$, where $V(H_2)=\{v\}$. Therefore, 
	 		 $G\cong K_1\vee (K_1\cup \frac{n-2}{3}K_3)$ with $n\equiv  2 \ (mod \ 3)$, and  $e(G)=2n-3>\frac{3n}{2}$ for $n\geq8$.
\end{proof}

 Now, we define four important types of vertices of degree $2$.

	\begin{definition}\label{d1}
		Let $v\in V(G)$ with $d(v)=2$ and $N(v)=\{v_1,v_2\}$.
		
		{\rm \item(i)} If $d(v_1)=2$, there exists $v_3\in N(v_1)\setminus \{v\}$ such that $v_3v_2\in E(G)$,  $d(v_2)\geq3$ and $d(v_3)\geq3$, then $v$ is called  a type-I vertex;
		
		{\rm \item(ii)} If $d(v_1)=2$, $N(v_1)=\{v, v_2\}$, and $d(v_2)\geq3$,  then $v$ is called  a type-II vertex;
		
		{\rm \item(iii)} If $d(v_1)\geq3$,   $d(v_2)\geq3$, and $v_1v_2\notin E(G)$,  then $v$ is called  a type-III vertex;
		
		{\rm \item(iv)} If $d(v_1)\geq3$,   $d(v_2)\geq3$,  $v_1v_2\in E(G)$, and there exist two distinct vertices  $v'_1,v'_2\in V(G)$ such that $v_1v'_1,v_2v'_2\in E(G)$, then $v$ is called  a type-IV vertex.
	\end{definition}
	
	\begin{lemma}\label{ll1}
		Let $G$ be a connected $K^3_3$-saturated graph of order $n\geq6$, $v\in V(G)$ with $d(v)=\delta(G)=2$. Then $v$ is  a type-I, type-II, type-III, or type-IV vertex.
	\end{lemma}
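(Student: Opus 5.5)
The plan is to split according to the degrees of $v_1,v_2$ (both are at least $\delta(G)=2$) and to whether $v_1v_2\in E(G)$. In each configuration that is not one of the four types, pick a suitable non-edge $e$. Since $G$ is $K^3_3$-saturated, $G+e$ contains a copy $Q$ of $K^3_3$, and the new edge $e$ lies either inside the center part or is a pendant edge of $Q$. The recurring trick, already used in Lemma \ref{l1} and Theorem \ref{l2}, is this: whenever an endpoint of $e$ is only a leaf of $Q$, replace that leaf by another neighbour of the same center vertex lying outside $Q$. This produces a copy of $K^3_3$ in $G$ itself, a contradiction. The hypothesis that $G$ is connected with $n\geq 6$ rules out small components such as an isolated triangle.

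\textbf{Case A: $d(v_1)=d(v_2)=2$.} If $v_1v_2\in E(G)$, then $\{v,v_1,v_2\}$ spans a triangle component, contradicting connectivity and $n\geq6$. Otherwise consider $G+v_1v_2$; in it, $v_1$ and $v_2$ have degree $3$. A center triangle through the edge $v_1v_2$ must be $\{v_1,v_2,v\}$ or $\{v_1,v_2,x\}$ with $x$ a common neighbour. In the first, $v$ has no outside neighbour to serve as its leaf. In the second, $v$ is the only available leaf for both $v_1$ and $v_2$, which is impossible since the leaves must be distinct. If instead $v_1$, say, is only a leaf, then the center triangle lies at $v_2$ in $G$. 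Its neighbours are then only $v$ and one more vertex, which do not give a triangle with distinct leaves. So Case A is impossible.

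\textbf{Case B: $d(v_1)=2$, $d(v_2)\geq3$.} Write $N(v_1)=\{v,v_3\}$. If $v_3=v_2$, then $v$ is type-II. Otherwise $v_1v_2\notin E(G)$, and we look at $G+v_1v_2$. If $v_1$ lies in $CQ$, the center triangle is one of $\{v_1,v,v_2\}$, $\{v_1,v,v_3\}$ or $\{v_1,v_2,v_3\}$. The first fails because $v$ has no leaf, and the second fails because $vv_3\notin E(G)$, so the third occurs and $v_2v_3\in E(G)$. If $v_1$ is merely a leaf attached to $v_2\in V(CQ)$, then $v\notin V(Q)$: $v$ cannot be in the center, since it has no common neighbour with $v_2$ except $v_1$, and it cannot be a leaf of another center vertex. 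Swapping $v_1$ for $v$ gives $K^3_3\subseteq G$, a contradiction. It remains to show $d(v_3)\geq 3$. If $d(v_3)=2$, then $N(v_3)=\{v_1,v_2\}$, so $v_2$ is a cut vertex separating the 4-cycle $vv_1v_3v_2$ part. Adding the chord $vv_3$ then creates only triangles $\{v,v_1,v_3\}$ or $\{v,v_3,v_2\}$, each containing a vertex with no available leaf, a contradiction. This gives type-I.

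\textbf{Case C: $d(v_1),d(v_2)\geq3$.} If $v_1v_2\notin E(G)$, $v$ is type-III. If $v_1v_2\in E(G)$, let $A_i=N(v_i)\setminus\{v,v_1,v_2\}$; both $A_1$ and $A_2$ are nonempty. Type-IV fails only when $A_1=A_2=\{w\}$, so $N(v_1)=\{v,v_2,w\}$ and $N(v_2)=\{v,v_1,w\}$. The main obstacle is to rule out this last configuration. Here I would add a non-edge at $v$, namely $vw$. In $G+vw$ any center triangle through $v$ lies inside $\{v,v_1,v_2,w\}$, which is a $K_4$ in $G+vw$. Since the vertices $v,v_1,v_2$ have no further neighbours, at most one leaf (hanging from $w$) is available, so no $K^3_3$ with center through $v$ can arise. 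If instead $v$ is a leaf of a center containing $w$, replacing $v$ by $v_1$ or $v_2$ gives a $K^3_3$ in $G$. For this replacement I must check that not both $v_1$ and $v_2$ lie in $Q$; this holds because any center containing them would force $v$ into the center as well. This contradiction finishes the proof.
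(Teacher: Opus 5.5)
Your proposal is correct and follows essentially the paper's route. It uses the same case split on $d(v_1),d(v_2)$ and on whether $v_1v_2\in E(G)$, and the same added non-edges: $v_1v_2$ when $d(v_1)=2$, and $vw$ when $N(v_1)\setminus\{v,v_2\}=N(v_2)\setminus\{v,v_1\}=\{w\}$. You also supply the $G+vv_3$ argument for $d(v_3)\geq 3$, which the paper dismisses as clear.

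One repair is needed: your stated reason that $v_1,v_2$ are not both in $Q$ (``would force $v$ into the center'') is wrong, but the conclusion holds in a stronger form, since neither vertex lies in $Q$. A center through $w$ and $v_1$ must be $\{w,v_1,v_2\}$, which leaves $v$ (already the leaf of $w$) as the only candidate leaf of $v_1$. If instead $v_1$ were a leaf, it could only hang from $v_2$, and a center through $w$ and $v_2$ is again $\{w,v_1,v_2\}$. The same argument applies to $v_2$.
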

	
	\begin{proof}
		Without loss of generality, let $N(v)=\{v_1,v_2\}$. 
		
		First, we show $d(v_1)\geq3$ or $d(v_2)\geq3$. Otherwise, $d(v_1)=d(v_2)=2$ since $\delta(G)=2$, then $v_1v_2\notin E(G)$ since $G$ is connected with order $n\geq6$, but $G+v_1v_2$ contains no copy of $K^3_3$ since $d(v_1)=d(v_2)=2$, a contradiction. Then $d(v_1)\geq3$ or $d(v_2)\geq3$.
		
		 We may assume $d(v_2)\geq3$. Now we complete the proof by the following arguments.
			
			If $d(v_1)=2$ and $v_1v_2\notin E(G)$, then there exists $v_3\in N(v_1)\setminus \{v\}$ such that $v_2v_3\in E(G)$. Otherwise, $G+v_1v_2$ contains no copy of $K^3_3$, a contradiction. Clearly, $d(v_2)\geq3$, $d(v_3)\geq3$, and $v$ is  a type-I vertex.
			
				If $d(v_1)=2$ and $v_1v_2\in E(G)$, then $d(v_2)\geq3$, and  $v$ is  a type-II vertex.
				
				If $d(v_1)\geq3$ and $v_1v_2\notin E(G)$, then $v$ is  a type-III vertex.
		
		 If $d(v_1)\geq3$ and $v_1v_2\in E(G)$, then there must exist two distinct vertices  $v'_1,v'_2$ such that $v_1v'_1,v_2v'_2\in E(G)$. Otherwise,  $d(v_1)=d(v_2)=3$, $v'_1=v'_2$, and  $G+vv'_1$ contains no copy of $K^3_3$, a contradiction. Thus $v'_1\neq v'_2$ and $v$ is  a type-IV vertex.
	\end{proof}

	\begin{lemma}\label{l3}
		Let $G$ be a connected $K^3_3$-saturated graph of order $n\geq6$, $\delta(G)\geq2$, $N_0\subset V(G)$ be a non-empty vertex set such that $V(G)=\bigcup\limits_{i=0}^{d}N_i$ with $3\leq d\leq4$, and there exists a vertex $v\in N_0$ satisfying $v\notin V(CK^2_3)$ for any  subgraph $K^2_3$ of $G$. 
		
		{\rm \item(i)} If $u\in \bigcup\limits_{i=3}^{d}N_i$, then for any $w\in N(u)$, we have $w\sim N(u)\setminus\{w\}$.
		
		{\rm \item(ii)} If $u\in \bigcup\limits_{i=3}^{d}N_i$ with $d(u)=2$, then $u\in N_3$ is a type-IV vertex and $N(u)\subseteq N_2$. 
	\end{lemma}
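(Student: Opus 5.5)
The engine of the proof is saturation applied to the nonedge $vu$. Fix $u\in N_i$ with $i\geq 3$. Then $d(v,u)\geq 3$, so $vu\notin E(G)$, and $G+vu$ contains a copy $Q$ of $K^3_3$ using the edge $vu$. Two observations control $Q$.

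First, $v$ cannot be in the center of $Q$. If $v\in V(CQ)$ with the edge $vu$ inside the center, then $u$ and $v$ share a center neighbour, which is impossible at distance $\geq 3$. If instead $v\in V(CQ)$ and $u$ is the pendant leaf at $v$, then deleting $u$ leaves the triangle on $V(CQ)$ with two pendant leaves, which is a $K^2_3$ in $G$ having $v$ in its center. This contradicts the choice of $v$. Hence $v$ is the pendant leaf attached to $u$, and $u\in V(CQ)$.

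Second, the whole configuration is far from $v$. Since $u\in N_i$ with $i\geq 3$, the other two center vertices $a,b$ lie in $N_{i-1}\cup N_i\cup N_{i+1}$. The two other leaves of $Q$ lie within distance one of these.

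For (i), let $w\in N(u)$. Suppose $w\not\sim N(u)\setminus\{w\}$. I will try to build a copy of $K^3_3$ in $G$ itself from $Q$. Replace the leaf $v$ at $u$ by a neighbour of $u$ that is not among $a,b$ and not among the other two leaves of $Q$. If such a neighbour exists, we get a $K^3_3$ in $G$, a contradiction. So every neighbour of $u$ is one of $a$, $b$, or the leaves of $Q$ hanging at $a$ and $b$. Then $u$'s neighbours are all in or adjacent to the triangle $uab$. Hence each $w\in N(u)$ is either $a$ or $b$, which are adjacent to each other, or a leaf $x$ adjacent to $a$ or $b$. In every case $w\sim N(u)\setminus\{w\}$.

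The delicate point is when a leaf $x$ of $Q$ at $a$ is also adjacent to $u$. Then $x$ is adjacent to $u$ and $a$ and is still fine for (i). I expect the main care to go into checking that the replacement argument handles these overlaps correctly. For that I will use that $\delta(G)\geq 2$ and that $G$ is $K^3_3$-free, so the triangle $uab$ has at most one vertex with an outside private neighbour structure.

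For (ii), let $d(u)=2$ with $N(u)=\{a,b\}$. By (i), $ab\in E(G)$, and $a,b$ are exactly the center partners of $u$ in $Q$. Then $Q$ needs distinct leaves $a'$ at $a$ and $b'$ at $b$ outside $\{u,a,b\}$, so $d(a),d(b)\geq 3$, and $u$ is a type-IV vertex by Definition \ref{d1}. It remains to show $u\in N_3$ and $N(u)\subseteq N_2$.

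Suppose $u\in N_j$ with $j\geq 3$ and some neighbour, say $a$, lies in $N_j\cup N_{j+1}$. One neighbour of $u$ must be in $N_{j-1}$, say $b$. I will apply (i) and saturation again to the nonedge $va$, which exists since $a\in N_{\geq 3}$. The resulting $K^3_3$ has $a$ in its center with $v$ as a leaf. Its remaining structure, together with $u$, $b$ and $a$'s other neighbours, should yield a $K^3_3$ in $G$ or force $d(u)\geq 3$, and I expect to adapt the argument from Claim A in Lemma \ref{l1} here.

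Having both $a,b\in N_{j-1}$ with $j\geq 4$ would give $d=4$ and $u\in N_4$. I plan to rule this out with the same ball-distance argument on the edge $vb$, whose copy of $K^3_3$ would have to use $u$ or a leaf at $b$. This gives $u\in N_3$ and $a,b\in N_2$. The hardest step is this final placement argument, excluding $a\in N_3$ and $u\in N_4$.
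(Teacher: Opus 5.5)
Your part (i) is correct and is exactly the paper's argument. In a copy $Q$ of $K^3_3$ in $G+vu$, $v$ must be the leaf at $u$, and any neighbour of $u$ outside $V(Q)$ could replace $v$. Your opening of (ii) is also fine: the centre partners of $u$ are $a,b$, so $ab\in E(G)$, and their leaves $a',b'$ make $u$ type-IV.

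\textbf{The gap.} The statements $N(u)\subseteq N_{j-1}$ and $j=3$ are the substance of (ii), and you only announce them (``should yield'', ``I plan to''). The tools you name do not do the job either. Claim A of Lemma \ref{l1} concerns a vertex adjacent to three consecutive vertices of a geodesic path and does not transfer. ``The same ball-distance argument on $vb$'' is never specified. As a minor point, $a\in N_{j+1}$ is impossible outright, since $ab\in E(G)$ and $b\in N_{j-1}$.

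\textbf{Excluding $a\in N_j$ when $b\in N_{j-1}$.} Take a copy $Q_2$ of $K^3_3$ in $G+va$; then $v$ is the leaf at $a$.
\begin{enumerate}
\item If $b\notin V(CQ_2)$, then $u\notin V(Q_2)$. It has degree $2$, so it is not central, and it could only hang at $a$, whose leaf is $v$, or at $b$. Swapping $v$ for $u$ then gives a $K^3_3$ in $G$.
\item Hence $V(CQ_2)=\{a,b,c\}$ with $c\in N_{j-1}\cup N_j$. Let $y\in N_{j-2}$ be a neighbour of $b$.
\item If $c\in N_j$, take the triangle $abc$ with leaves $u$ at $a$, $y$ at $b$, and the $Q_2$-leaf of $c$. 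This lies in $G$.
\item If $c\in N_{j-1}$, pick a neighbour $z\in N_{j-2}$ of $c$. If $z\neq y$, use leaves $u,y,z$ on the triangle $abc$.
\item If $z=y$, use instead the triangle $bcy$ with leaves $u$ at $b$, $a$ at $c$, and a neighbour of $y$ in $N_{j-3}$.
\end{enumerate}
This last coincidence case is the easy one to overlook.

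\textbf{Excluding $u\in N_4$ when $a,b\in N_3$.} This needs a separate argument of the same kind. Take a copy $Q_3$ in $G+vb$, force $a$ into the centre as above, and let $c$ be the third centre vertex. Then exhibit a $K^3_3$ in $G$ by choosing pendant leaves among $u$, $a'$, $b'$, the $Q_3$-leaf of $c$, and back neighbours in $N_1$ or $N_2$. The choice depends on whether $c\in\{a',b'\}$ and on whether $c$ lies in $N_2$, $N_3$ or $N_4$; the paper carries out exactly this case analysis.

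Until these arguments are supplied, (ii) is not proved.
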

	
	\begin{proof}
			We show {\rm(i)} and {\rm(ii)} in sequence.
			
			{\rm(i)} Suppose to the contrary that $w\not\sim N(u)\setminus\{w\}$. Let $Q$ be a copy of $K^3_3$ in $G+vu$. Then $d_{Q}(v)=1$ and  $u\in V(CQ)$ since $uv\in E(Q)$, $ d(u,v)\geq i\geq3$, and $v\notin V(CK^2_3)$ for any  subgraph $K^2_3$ of $G$, and thus $w\notin V(Q)$ since $w\not\sim N(u)\setminus\{w\}$, which implies that $G[(V(Q)\setminus\{v\})\cup\{w\}]$ contains  a copy of $K^3_3$, a contradiction.

	{\rm(ii)}	Let $u\in \bigcup\limits_{i=3}^{d}N_i$ with $d(u)=2$. Then there exist $u_1,u_2\in V(G)$ such that $N(u)=\{u_1,u_2\}$. First, we prove that the following claim holds.
		
		\textbf{Claim B.} If $u\in N_j$ with $j\in \{3,4\}$, then $u$ is a type-IV vertex and $\{u_1,u_2\}\subseteq N_{j-1}$.
		
		\begin{proof}
				Let $Q_1$ be a copy of $K^3_3$ in $G+vu$. Then $d_{Q_1}(v)=1$, $u\in V(CQ_1)$ since $d(v,u)\geq j\geq3$ and $v\notin V(CK^2_3)$ for any  subgraph $K^2_3$ of $G$.
				Clearly, $u$ is a type-IV vertex,  $u_1,u_2\in \bigcup\limits_{i=j-1}^{d}N_i$, and $\{u_1,u_2\}\cap N_{j-1}\neq \emptyset$ since $u\in N_j$ and $N(u)=\{u_1,u_2\}$. 
				
					
					If $u_1\in  N_{j-1}$ and $u_2\in  N_j$, then there exists $u'_1\in N_{j-2}$  such that $u'_1u_1\in E(G)$ and $d(u_2,v)\geq j\geq3$. Let $Q_2$ be a copy of $K^3_3$ in $G+vu_2$. Then  $d_{Q_2}(v)=1$ and $u_2\in V(CQ_2)$. It is obvious that $u\notin V(CQ_2)$ since $d(u)=2$. If $u_1\notin  V(CQ_2)$, then $u\notin V(Q_2)$ since $N(u)=\{u_1,u_2\}$, which implies that $G[(V(Q_2)\setminus\{v\})\cup \{u\}]$  contains  a copy of $K^3_3$, a contradiction. Therefore, $V(CQ_2)\cap \{u,u_1,u_2\}=\{u_1,u_2\}$. 
					
					Suppose that  $V(CQ_2)=\{u_1,u_2,u_4\}$, where $u_4\in N_j\cup N_{j-1}$ since $u_1\in  N_{j-1}$, $u_2\in  N_j$ and $u_1u_4,u_2u_4\in E(G)$. If $u_4\in N_j$, then there exists $u_5\in V(G)\setminus((\bigcup\limits_{i=0}^{j-2} N_i)\cup\{u,u_1,u_2,u_4\})$ with $u_5u_4\in E(G)$ since $u_4\in V(CQ_2)$, thus $G[\{u_1,u_2,u_4,u'_1,u,u_5\}]$  contains  a copy of $K^3_3$, a contradiction. If $u_4\in N_{j-1}$, then there exists $u'_4\in N_{j-2}$ such that $u'_4u_4\in E(G)$. If $u'_4\neq u'_1$, then $G[\{u_1,u_2,u_4,u'_1,u,u'_4\}]$ contains  a copy of $K^3_3$, a contradiction. If $u'_4= u'_1$, then there exists $u''_4\in N_{j-3}$ such that $u''_4u'_4\in E(G)$, and thus $G[\{u_1,u_4,u'_4,u,u_2,u''_4\}]$ contains  a copy of $K^3_3$, a contradiction.
					
					Therefore, $u_1,u_2\in N_{j-1}$, and $u$ is a type-IV vertex. Claim B is proved.
		\end{proof}

		 If $u\in N_4$, then $d=4$, and $u$ is a type-IV vertex and $N(u)\subset N_3$ by Claim B.  Clearly, there exist two distinct vertices $u_5,u_6\in V(G)\setminus \{u,u_1,u_2\}$ with $u_1u_5,u_2u_6\in E(G)$ by (iv) of Definition \ref{d1}. Let $Q_3$ be a copy of $K^3_3$ in $G+vu_2$. Then $d_{Q_3}(v)=1$ and $u_2\in V(CQ_3)$, and thus $u_1\in V(CQ_3)$. Otherwise, $u\notin V(Q_3)$, and thus $G[(V(Q_3)\setminus\{v\})\cup \{u\}]$ contains  a copy of $K^3_3$, a contradiction. 
		 
		 Suppose that $V(CQ_3)=\{u_1,u_2,u^*\}$, $u_7\in V(Q_3)\setminus V(CQ_3)$ with $u_7u^*\in E(Q_3)$. Clearly, we have $u^*\in N_2\cup N_3\cup N_4$.  If $u^*\notin \{u_5,u_6\}$, then $G[\{u_1,u_2,u^*,u,u_i,u_7\}]$ contains  a copy of $K^3_3$, where $i=5$ or $6$, a contradiction. If $u^*\in \{u_5,u_6\}$, we may assume $u^*=u_5$. If $u^*\in N_2$, then there exists $u_8\in N_1$ such that $u^*u_8\in E(G)$, and thus $G[\{u_1,u_2,u^*,u,u_6,u_8\}]$ contains  a copy of $K^3_3$, a contradiction. If $u^*\in N_3$ and $u_6\neq u_7$, then $G[\{u_1,u_2,u^*,u,u_6,u_7\}]$ contains  a copy of $K^3_3$, a contradiction. If $u^*\in N_3$ and $u_6= u_7$, then $u_6u^*, u_6u_2\in E(G)$. When $u_6\in N_2$, there exists $u_9\in N_1$ such that $u_9u_6\in E(G)$, and thus  $G[\{u_6,u_2,u^*,u_9,u,u_1\}]$ contains  a copy of $K^3_3$, a contradiction; when $u_6\notin N_2$, there exists $u_{10}\in N_2$ such that $u_2u_{10}\in E(G)$ since $u_2\in N_3$, and thus $G[\{u_1,u_2,u^*,u,u_{10},u_6\}]$ contains  a copy of $K^3_3$, a contradiction.
		 If $u^*\in N_4$, then $u_7\in N_3\cup N_4$, and there exists  $u'_1\in N_2$ such that $u'_1u_1\in E(G)$, which implies that $G[\{u_1,u_2,u^*,u'_1,u,u_7\}]$ contains  a copy of $K^3_3$, a contradiction.

Therefore, $u\in N_3$ is a type-IV vertex and $N(u)\subseteq N_{2}$ by Claim $B$.
	\end{proof}

			\begin{theorem}\label{p1}
			Let $G$ be a connected $K^3_3$-saturated graph of order $n\geq7$, $\delta(G)=2$. Then  $e(G)\geq\begin{cases}
				\frac{3n-3}{2},&\text{if}\ n \ \text{is odd}; \\
				\frac{3n}{2},&\text{if}\ n\ \text{is even}.
			\end{cases}$ Moreover, $e(G)=\frac{3n-3}{2}$ if and only if $G\cong K_1\vee \frac{n-1}{2}K_2$ with odd $n$, and $e(G)=\frac{3n}{2}$ if $G\cong K_1\vee (\frac{n-4}{2}K_2\cup K_3)$ with even $n$.
		\end{theorem}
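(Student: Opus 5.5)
The plan is to split according to whether $G$ has a dominating vertex, and in each case count $e(G)=\frac12\sum_{v}d(v)$ by comparing every degree with $3$. In the case with a dominating vertex the structure can be pinned down exactly, and the extremal graphs come from that case. In the case without one, I will use a discharging argument to show $\sum_v d(v)\geq 3n$, with strict inequality when $n$ is even, so that no new extremal graphs appear. Throughout, $\delta(G)=2$, so every vertex has degree at least $2$ and the only deficient vertices are those of degree exactly $2$.

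\textbf{Case A: some $w$ has $d(w)=n-1$.} Since every other vertex lies in $N(w)$, Observation~\ref{o} makes $G-w$ a $P_4$-free graph. Its connected components are therefore stars $K_{1,r}$ or triangles $K_3$. There are no isolated vertices, because $\delta(G)=2$ forces every vertex other than $w$ to have a neighbour besides $w$.
\begin{itemize}
\item A star $K_{1,r}$ with $r\geq 2$ is impossible: adding an edge between two of its leaves creates no $K^3_3$. I will check this as in the proof of Theorem~\ref{l2}, by noting that every new triangle has a vertex whose only other neighbours are already in the triangle.
\item Similar saturation checks restrict which combinations of components are allowed.
\end{itemize}
Hence $G=K_1\vee(aK_2\cup bK_3)$ with $2a+3b=n-1$. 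Then $e(G)=(n-1)+a+3b=\frac{3(n-1)}{2}+\frac{3b}{2}$. This equals $\frac{3n-3}{2}$ exactly when $b=0$, which forces $n$ odd. For $n$ even we need $b\geq1$, giving $e(G)\geq\frac{3n}{2}$, with equality for $b=1$, i.e.\ $K_1\vee(\frac{n-4}{2}K_2\cup K_3)$. I will also verify directly that these graphs are $K^3_3$-saturated.

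\textbf{Case B: no dominating vertex.} Here I aim to show $\sum_v (d(v)-3)\geq 0$, and in fact $\sum_v(d(v)-3)>0$ whenever $n$ is even. The structure comes from Lemma~\ref{ll1}: every degree-2 vertex $v$ is of type I--IV. Its neighbourhood then forces nearby vertices of degree at least $3$, and saturation (adding $vx$ for non-neighbours $x$) forces those neighbours to lie in triangles with pendant structure. The discharging rule is that each degree-2 vertex receives charge $1$, split between its neighbours of degree at least $3$ (for type I and II, routed through the degree-$\geq3$ vertex $v_2$ or $v_3$). I then need to show that every vertex $x$ of degree at least $3$ receives at most $d(x)-3$.
\begin{itemize}
\item I first take $N_0$ to be a suitable vertex or triangle and form the layers $N_i$. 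Lemma~\ref{l1} bounds the number of layers by $4$.
\item Lemma~\ref{l3} then controls degree-2 vertices far from $N_0$: they are type-IV vertices in $N_3$ with both neighbours in $N_2$, and every neighbour $w$ of a far vertex has $w\sim N(u)\setminus\{w\}$.
\item These local facts should bound, for each $x$ of degree at least $3$, how many degree-2 vertices can charge it. The idea is that two degree-2 vertices sharing a high-degree neighbour $x$ and charging it fully would force either $x$ to be dominating or a $P_4$ in $G[N(x)]$, contradicting Observation~\ref{o}.
\end{itemize}

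The main obstacle is this charge bound in Case B, and in particular obtaining strict inequality in the even case. This needs careful handling of type-IV vertices, whose two neighbours are adjacent and may be shared by many degree-2 vertices, since that is exactly how $K_1\vee sK_2$ achieves ratio $\frac32$. I would first try to prove that if some vertex $x$ receives more than $d(x)-3$, then $x$ is adjacent to all other vertices, by running the saturation argument on each non-neighbour $y$ of $x$ via $G+vy$ for a degree-2 neighbour $v$ of $x$. That would push all tight configurations into Case A.
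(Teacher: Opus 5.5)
\textbf{Verdict: genuine gap.} Splitting on whether $G$ has a dominating vertex is a reasonable decomposition. However, Case B, which carries the theorem, is only a plan. Your discharging rule is stated, but the key inequality (every $x$ with $d(x)\ge3$ receives at most $d(x)-3$) is never proved. The fallback, that a vertex receiving more than $d(x)-3$ must be dominating, is not argued either.

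\textbf{Part of the Case B target is false.} You aim for $\sum_v(d(v)-3)>0$ in Case B whenever $n$ is even. Take the $4$-cycle $w_1w_2w_3w_4$ and add, for each $i$ (indices mod $4$), a vertex $z_i$ adjacent to $w_i$ and $w_{i+1}$. This graph has $n=8$, $e=12=\frac{3n}{2}$, $\delta=2$ and maximum degree $4$, so it has no dominating vertex. Its only triangles are $z_iw_iw_{i+1}$, and each contains the degree-$2$ vertex $z_i$, so it is $K^3_3$-free. It is saturated:
\begin{itemize}
\item Adding $w_1w_3$ (and symmetrically $w_2w_4$) gives the triangle $w_1w_2w_3$ with pendants $z_4,z_1,z_3$.
\item Adding $z_iy$ makes $z_iw_iw_{i+1}$ the centre, with pendant $y$ at $z_i$. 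The pendants at $w_i$ and $w_{i+1}$ can be chosen from the disjoint sets $\{w_{i-1},z_{i-1}\}$ and $\{w_{i+2},z_{i+1}\}$ so as to avoid $y$.
\end{itemize}
So $\sum_v(d(v)-3)=0$ exactly, and no discharging can give strict inequality; the same construction on $C_5$ gives $n=10$.

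\textbf{Strictness is not needed.} The statement only says that $K_1\vee(\frac{n-4}{2}K_2\cup K_3)$ attains $\frac{3n}{2}$, not that it is unique. By integrality, it suffices to show $\sum_v(d(v)-3)\ge-1$ for every $G\not\cong K_1\vee\frac{n-1}{2}K_2$. This is exactly what the paper proves, and it is considerably weaker than your Case B target $\sum_v(d(v)-3)\ge0$. The paper does not split on a dominating vertex:
\begin{itemize}
\item It classifies a degree-$2$ vertex as type I--IV (Lemma~\ref{ll1}).
\item In Cases 1--3 it builds distance layers from $\{v\}$ or $\{v,v_1\}$ and uses Lemmas~\ref{l1} and~\ref{l3} to force degree $\ge3$ in the outer layers.
\item In Case 4 it groups the degree-$2$ vertices by the components $W_i$ of $G[W]$, proves the local inequality \eqref{e}, and shows in Claim 6 that at most one group falls short by one.
\end{itemize}

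\textbf{Case A also contains an error.} A star component $K_{1,r}$ of $G-w$ with $r\ge3$ is \emph{not} excluded. With centre $c$ and leaves $\ell_1,\ell_2,\ell_3$, the graph $G+\ell_1\ell_2$ contains $K^3_3$ on the \emph{old} triangle $wc\ell_1$, with pendants $\ell_2$ at $\ell_1$, $\ell_3$ at $c$, and any other vertex at $w$. Your check only examined new triangles. Every edge between distinct components of $G-w$ also creates a $K^3_3$ on a triangle through $w$.

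Hence every $K_1\vee(aK_2\cup bK_3\cup\bigcup_i K_{1,r_i})$ with all $r_i\ge3$ is saturated; for example, $K_1\vee(\frac{n-5}{2}K_2\cup K_{1,3})$ has $\frac{3n-1}{2}$ edges. The correct count is $e(G)=\frac{3(n-1)}{2}+\frac{3b}{2}+\sum_i\frac{r_i-1}{2}$. The bound survives: each star contributes at least $1$, and for even $n$ parity forces $b\ge1$ or some even $r_i\ge4$. But your claim that even $n$ forces $b\ge1$ is wrong: $K_1\vee(\frac{n-6}{2}K_2\cup K_{1,4})$ has exactly $\frac{3n}{2}$ edges. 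So ``no new extremal graphs'' fails already in Case A.
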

		
		\begin{proof}
			By $\delta(G)=2$, there exist	 $v,v_1,v_2\in V(G)$  such that  $N(v)=\{v_1,v_2\}$. Thus $v$ is  a type-I, type-II, type-III, or type-IV vertex by Lemma \ref{ll1}.
				
					\textbf{Case 1.} $v$ is  a type-I vertex.
				
				Without loss of generality, we may assume $d(v_1)=2$ and $d(v_2)\geq3$. Then there exists $v_3\in N(v_1)\setminus \{v\}$ such that $d(v_3)\geq3$ and $v_2v_3\in E(G)$ since $v$ is a type-I vertex. Let $N_0=\{v,v_1\}$, $B=\{x\in N_2\mid xv_2, xv_3\in E(G)\}$. Then  $N_1=\{v_2,v_3\}$ and $d(x)=2$ for each $x\in B$ since $G$ is $K^3_3$-free.
				Clearly, $v$ and $v_1$ do not  belong to $V(CK^2_3)$ for any copy of $K^2_3$ in  $G$.

				Let $V(G)=\bigcup\limits_{i=0}^{d}N_i$. Then $d\leq diam(G)$ and  $2\leq d\leq 4$ by Lemma \ref{l1}.
				
				\textbf{Claim 1.} If $N_2\setminus B\neq \emptyset$, then $d(u)\geq3$ for each $u\in N_2\setminus B$.
				
				\begin{proof}
					Suppose to the contrary that there exists $u_1\in N_2\setminus B$ such that $d(u_1)=2$.  	Without loss of generality, we may assume $u_1v_2\in E(G)$ and $u_1v_3\notin E(G)$.
					Let $Q_1$ be a copy of $K^3_3$ in $G+v_1u_1$.
					Then $d_{Q_1}(v_1)=1$, $u_1, v_2\in V(CQ_1)$ since $d(u_1)=2$, which implies that there exists $u_2\in N_2\setminus B$ such that $u_2u_1,u_2v_2\in E(G)$. Similarly, 	let $Q_2$ be a copy of $K^3_3$ in $G+v_1u_2$. Then $d_{Q_2}(v_1)=1$ and $u_2\in V(CQ_2)$, and thus $u_1\notin V(Q_2)$ since $d(u_1)=2$, which implies that $G[(V(Q_2)\setminus \{v_1\})\cup \{u_1\}]$ contains  a copy of $K^3_3$, a contradiction.
				\end{proof}
				
				\textbf{Claim 2.} If $d\geq3$, then $d(w)\geq 3$ for each $w\in \bigcup\limits_{i=3}^{d}N_i$.

				\begin{proof}
					Suppose to the contrary that there exists $w_1\in \bigcup\limits_{i=3}^{d}N_i$ such that $d(w_1)=2$.  Let $N(w_1)=\{w_2,w_3\}$. Then $w_1\in N_3$ is a type-IV vertex and $N(w_1)\subseteq N_2$ by (ii) of Lemma \ref{l3}, and thus $w_2w_3\in E(G)$ by (iv) of Definition \ref{d1}, which implies that $\{w_2,w_3\}\subseteq N_2\setminus B$. 
					
					If $w_2v_i,w_3v_i\in E(G)$, where $i=2$ or $3$, then $d(w_2)=d(w_3)=3$ since $G$ is $K_3^3$-free, and thus $G+v_1w_1$ contains no  copy of $K^3_3$, a contradiction. 
					
					If $w_2v_2,w_3v_3\in E(G)$, then $w_2v_3, w_3v_2\notin E(G)$ since  $G$ is $K_3^3$-free. Let $Q_3$ be  a copy of $K^3_3$ in $G+v_1w_2$. Then $d_{Q_3}(v_1)=1$ and $w_2\in V(CQ_3)$.
					When $V(CQ_3)\cap \{w_2,w_3\}=\{w_2\}$, then $w_1\notin V(Q_3)$ since $N(w_1)=\{w_2,w_3\}$, and $G[(V(Q_3)\setminus \{v_1\})\cup \{w_1\}]$ contains  a copy of $K^3_3$, a contradiction. When $V(CQ_3)\cap \{w_2,w_3\}=\{w_2, w_3\}$, it is clear that $G$ contains  a copy of $K^3_3$, a contradiction.
					
					Combining the above arguments, Claim $2$ holds.
				\end{proof}
				
				By Claim 1 and Claim 2, we have $$e(G)\geq\begin{cases}
					\frac{4+(4+2|B|+|N_2\setminus B|)+(2|B|+3|N_2\setminus B|)}{2}=2n-4,&\text{if}\ d=2; \\
					 \frac{4+(4+2|B|+|N_2\setminus B|)+(2|B|+3|N_2\setminus B|)+3(n-4-|N_2|)}{2}=\frac{3n+|N_2|-4}{2},&\text{if}\ d\geq3.
				\end{cases}$$ 
				
				 Clearly, $2n-4\geq\frac{3n-1}{2}$, and $|N_2|\geq 2$ since $n\geq7$ and $d(v_2)\geq3, d(v_3)\geq3$. 
				 
				 Furthermore, we claim that $ |N_2|\geq3$. Otherwise, suppose that $N_2=\{v',v''\}$. If $N_2=B$, then $N(v')=N(v'')=\{v_2,v_3\}$ and $n=6$, a contradiction. If $|N_2\cap B|\leq1$, we assume that $v'\notin B$, and $v'v_2\in E(G),v'v_3\notin E(G)$, then $v''v_3\in E(G)$ since $d(v_3)\geq3$.  
				Let $Q$ be a copy of $K^3_3$ in $G+v_1v'$. Then $d_Q(v_1)=1$ and $v'\in V(CQ)$. Since $G$ is $K^3_3$-free,  $v_2v''\notin E(G)$ or $v'v''\notin E(G)$. Thus $v_2\notin V(CQ)$. Furthermore, $v_2\notin V(Q)\setminus V(CQ)$, otherwise, $v''\in V(CQ)$ and $v''v_2\in E(Q)$ since $N_2=\{v',v''\}$ and $v'v_3\notin E(G)$, which implies that $v'v''\in E(G)$, a contradiction. So $G[(V(Q)\setminus\{v_1\})\cup \{v_2\}]$ contains a  copy of $K^3_3$, a contradiction. Therefore, we have $|N_2|\geq3$ and $e(G)\geq\frac{3n-1}{2}$.
				

				\textbf{Case 2.} $v$ is  a type-II vertex.
				
				Let $N(v_1)=\{v,v_2\}$, $N_0=\{v,v_1\}$. Then $N_1=\{v_2\}$ and  $V(G)=\bigcup\limits_{i=0}^{\varepsilon(v)}N_i$, where  $2\leq\varepsilon(v)\leq 4$ by Lemma \ref{l1}. Clearly, $v$ and $v_1$ do not  belong to $V(CK^2_3)$ for any copy of $K^2_3$ in  $G$.
				
				\textbf{Claim 3.} If $\varepsilon(v)\geq3$, then $d(w)\geq 3$ for each $w\in \bigcup\limits_{i=3}^{\varepsilon(v)}N_i$.
				
				\begin{proof}
					Suppose to the contrary that there exists $w_1\in \bigcup\limits_{i=3}^{\varepsilon(v)}N_i$ such that $d(w_1)=2$.  Let $N(w_1)=\{w_2,w_3\}$. Then $w_1\in N_3$, $N(w_1)\subseteq N_2$, and $w_2w_3\in E(G)$ by (ii) of Lemma \ref{l3} and (iv) of Definition \ref{d1}, and thus $d(w_2)=d(w_3)=3$ since $w_2v_2,w_3v_2\in E(G)$ and $G$ is $K_3^3$-free. Clearly, $G+vw_1$ contains no  copy of $K^3_3$, a contradiction. 
				\end{proof}
				
				Now we complete the proof by considering $\varepsilon(v)=2$ and $\varepsilon(v)\geq3$.
				
				If $\varepsilon(v)=2$, then $d(v_2)=n-1$, and $e(G)\geq \frac{4+(n-1)+2(n-3)}{2}=\frac{3n-3}{2}$ since $\delta(G)=2$, with equality holds if and only if $d(u)=2$ for each $u\in N_2$. It is clear that $G\cong K_1\vee \frac{n-1}{2}K_2$ if $e(G)=\frac{3n-3}{2}$. If $e(G)\neq \frac{3n-3}{2}$, then $e(G)\geq \frac{3n-2}{2}$, and thus there exist $v_3,v_4,v_5\in N_2$ such that $d(v_3)\geq3$ and $v_4,v_5\in N(v_3)$. If $d(v_3)=3$, then $v_4v_5\in E(G)$. Otherwise, $v_4v_5\notin E(G)$ and $d(v_4)=d(v_5)=2$ by Observation \ref{o}, then $G+v_4v_5$ contains no  copy of $K^3_3$, a contradiction, which implies that $e(G)\geq \frac{3n}{2}$. If $d(v_3)\geq4$, then $e(G)\geq\frac{4+(n-1)+2(n-2)}{2}= \frac{3n-1}{2}$.

				If $\varepsilon(v)\geq 3$, then there exist $v_6\in N_2$, $v_7\in N_3$ such that $v_2v_6,v_6v_7\in E(G)$. Let $Q_4$ be  a copy of $K^3_3$ in $G+vv_7$. Then $d_{Q_4}(v)=1$ and $v_7\in V(CQ_4)$. We claim that $d(v_6)\geq3$. Otherwise, $d(v_6)=2$.  Then $v_6\notin V(Q_4)$,  and thus $G[(V(Q_4)\setminus \{v\})\cup \{v_6\}]$ contains a  copy of $K^3_3$, a contradiction. Thus $e(G)\geq \frac{4+(2+|N_2|)+(2|N_2|+1)+3(n-3-|N_2|)}{2}=\frac{3n-2}{2}$ by Claim 3. Now we show  $e(G)\neq\frac{3n-2}{2}$.
				
				If $e(G)=\frac{3n-2}{2}$, then there exists a unique vertex $v_6\in N_2$ with $d(v_6)=3$ such that   $v_6v_7\in E(G)$ for some  $v_7\in N_3$ and $d(w)=3$ for each $w\in \bigcup\limits_{i=3}^{\varepsilon(v)}N_i$. Clearly, $v_6\in V(Q_4)$, otherwise, $G[(V(Q_4)\setminus \{v\})\cup \{v_6\}]$ contains a  copy of $K^3_3$, a contradiction.
				
				If $v_6\in V(Q_4)\setminus V(CQ_4)$, then there exist $v_8,v_9\in\bigcup\limits_{i=3}^{\varepsilon(v)} N_i$ such that $V(CQ_4)=\{v_7,v_8,v_9\}$ and $v_8v_6\in E(Q_4)$ since $d_{Q_4}(v_8)=d_{Q_4}(v_9)=3$,  which implies that $G+vv_9$ contains no  copy of $K^3_3$, a contradiction.

				If $v_6\in V(CQ_4)$, then we assume $V(CQ_4)=\{v_6,v_7,v_8\}$ such that $v_7,v_8\in N_3$ and $d(v_8)=3$ since there exists $v_9\in (V(Q_4)\setminus V(CQ_4))\cap (\bigcup\limits_{i=2}^{\varepsilon(v)} N_i)$ satisfying $v_8v_9\in E(Q_4)$. If $v_9\in N_2$, then $N(v_9)=\{v_2,v_8\}$, and  $v_9\not\sim N(v_8)\setminus\{v_9\}$, a contradiction to (i) of Lemma \ref{l3}.  If $v_9\in N_3$, then $d(v_9)=3$ and there exists $v_{10}\in N_2\setminus\{v_6\}$ such that $d(v_{10})=2$ and  $v_{10}v_9\in E(G)$ since $N(v_6)=\{v_2,v_7,v_8\}$, which implies that $v_{10}\not\sim N(v_9)\setminus\{v_{10}\}$, a contradiction to (i) of Lemma \ref{l3}. If $v_9\in N_4$, then $d(v_9)=3$ and $v_7v_9\in E(G)$ since $d(v_7)=3$ and $G$ is $K^3_3$-free, which implies that $G+vv_9$ contains no  copy of $K^3_3$, a contradiction. 
				
				Therefore, $e(G)\neq \frac{3n-2}{2}$, and then $e(G)\geq\frac{3n-1}{2}$.
				

				\textbf{Case 3.} $v$ is  a type-III vertex.
				
				Let  $N_0=\{v\}$. Then $N_1=\{v_1,v_2\}$, $v_1v_2\notin E(G)$ with $d(v_1)\geq3,d(v_2)\geq3$ since $v$ is  a type-III vertex, and $V(G)=\bigcup\limits_{i=0}^{\varepsilon(v)}N_i$, which implies that $2\leq\varepsilon(v)\leq 4$ by Lemma \ref{l1}. Clearly,  $v\notin V(CK^2_3)$ for any copy of $K^2_3$ in  $G$.

				\textbf{Claim 4.} Let $V_1=\{u\in N_2\mid uv_1,uv_2\in E(G)\}$. Then $|V_1|\geq1$ and $d(u)\geq3$ for each $u\in V_1$.
				
				\begin{proof}
					Let $Q_5$ be a copy of $K^3_3$ in $G+v_1v_2$. Then $v_1,v_2\in V(CQ_5)$. Otherwise, we may assume $d_{Q_5}(v_1)=1$ and $v_2\in V(CQ_5)$, then $v\notin V(Q_5)$ since $N(v)=\{v_1,v_2\}$, and thus $G[(V(Q_5)\setminus \{v_1\})\cup \{v\}]$ contains  a copy of $K^3_3$, a contradiction. Thus there exists $u\in V(CQ_5)\setminus \{v_1,v_2\}$, which implies that $u\in V_1$ and $d(u)\geq d_{Q_5}(u)\geq3$. 
					
					If $|V_1|\geq2$, then there exists $w\in V_1\setminus\{ u\}$ with $wv_1,wv_2\in E(G)$. Let $Q_6$ be a copy of $K^3_3$ in $G+wv$. Then $d_{Q_6}(w)\neq1$ since $v_1v_2\notin E(G)$ and $N(v)=\{v_1,v_2\}$. If $d_{Q_6}(v)=1$ and $w\in V(CQ_6)$, then $d(w)\geq3$ since $v_1v_2\notin E(G)$. If $w,v\in V(CQ_6)$, then we may assume $V(CQ_6)=\{w,v,v_1\}$, and thus $v_2v\in E(Q_6)$, which implies $d(w)\geq3$.
					
					We complete the proof of the claim.
				\end{proof}

				Now we show $e(G)\geq\frac{3n-1}{2}$ by the following two subcases.
				
				\textbf{Subcase 3.1.} $\varepsilon(v)=2$.
				
				Clearly, we have $V(G)=\bigcup\limits_{i=0}^{2}N_i$.  
				
					If $|V_1|\geq2$, then $e(G)\geq \frac{4+(n-3+2)+(2(n-5)+6)}{2}=\frac{3n-1}{2}$ by Claim 4.

				If $|V_1|=1$ and $d(w)\geq3$ for each $w\in V(G)\setminus\{v\}$, then $e(G)\geq \frac{2+3(n-1)}{2}=\frac{3n-1}{2}$.
				
				If $|V_1|=1$ and there exists vertex $w\in N_2$ with $d(w)=2$, then  $w$ is a type-I, type-II,  type-III, or   type-IV vertex by Lemma \ref{ll1}. Without loss of generality, let $V_1=\{u\}$. Now we show $e(G)\geq \frac{3n-1}{2}$ by the following arguments.

				If  $w$    is a type-I or type-II vertex, then $e(G)\geq \frac{3n-1}{2}$ by Cases 1-2 and $G\not\cong K_1\vee\frac{n-1}{2}K_2$. 
				
				If  $w$    is a type-III vertex, we may assume $N(w)=\{v_1,w_1\}$, where $w_1\in N_2$ and $w_1v_1\notin E(G)$. Then $w_1v_2\in E(G)$ with $d(w_1)\geq3$, and thus $uw_1\notin E(G)$ since $G$ is $K^3_3$-free. Let $Q_7$ be a copy of $K^3_3$ in $G+w_1v_1$. Then $v_1,w_1\in V(CQ_7)$; otherwise, $v_1\notin V(CQ_7)$ or $w_1\notin V(CQ_7)$. If $v_1\notin V(CQ_7)$, then $w\notin V(Q_7)$ since $N(w)=\{v_1,w_1\}$, and thus  $G[(V(Q_7)\setminus \{v_1\})\cup \{w\}]$ contains  a copy of $K^3_3$, a contradiction. Similarly, if $w_1\notin V(CQ_7)$, then $G[(V(Q_7)\setminus \{w_1\})\cup \{w\}]$ contains  a copy of $K^3_3$, a contradiction. Thus there exists $w_2\in N_2\setminus\{w_1,u\}$ such that $ V(CQ_7)=\{v_1,w_1,w_2\}$ with $d(w_2)\geq3$. Therefore, $e(G)\geq \frac{4+(n-2)+(2(n-6)+9)}{2}=\frac{3n-1}{2}$ by the above arguments and Claim 4.

				If  any vertex  of degree $2$ in $N_2$ is a type-IV vertex, 
				  then we have $u\in V(CQ_5)\setminus \{v_1,v_2\}$ by the proof of Claim 4, and there exists  $u_8\in N_2$ such that $uu_8,v_iu_8\in E(G)$ and $v_ju_8\notin E(G)$, where $\{i,j\}=\{1,2\}$. Since  $G$ is $K^3_3$-free, we have $d(u_8)=2$. Since $v$ is a type-III vertex, we have $d(v_j)\geq3$, and there exists $u_9\in N_2\setminus\{u,u_8\}$ such that  $u_9v_j\in E(G)$.
				 
				 If $d(v_i)\geq4$, then there exists $u_{10}\in N_2\setminus\{u,u_8,u_9\}$ with $u_{10}v_i\in E(G)$. When $d(u_9)\geq3$ and $d(u_{10})\geq3$, we have $d(u)+d(u_9)+d(u_{10})\geq 9$. When $d(u_p)=2$ and $d(u_q)\geq3$ for $\{p,q\}=\{9,10\}$,  $u_p$ is a  type-IV vertex. Thus
				  $u_pu\in E(G)$ satisfies $d(u)+d(u_q)+d(u_{p})\geq 9$, or there exists $u_{11}\in N_2\setminus\{u,u_8,u_9,u_{10}\}$ with $u_pu_{11}\in E(G)$ and $d(u_{11})\geq3$ such that $d(u)+d(u_q)+d(u_{11})\geq 9$.  When $d(u_9)=d(u_{10})=2$, both $u_9$ and $u_{10}$ are type-IV vertices. Similar to the above arguments, there exist three vertices in $N_2$ such that the sum of degrees of these three vertices is at least $9$. Combining the above arguments, we have  $e(G)\geq \frac{2+n+(2(n-6)+9)}{2}=\frac{3n-1}{2}$.
				  
				  If $d(v_i)=3$, then $d(u)\geq4$. Otherwise, we have $N(u)=\{v_1,v_2,u_8\}$, which implies that $G+v_ju_8\notin E(G)$ contains no a copy of $K^3_3$, a contradiction. Thus $e(G)\geq\frac{2+n+(2(n-4)+4)}{2}=\frac{3n-2}{2}$. We claim that $e(G)\geq \frac{3n-1}{2}$. Otherwise,  we have $d(u)=4$ and $d(u')=2$ for each $u'\in N_2\setminus\{u\}$, then $n=6$ since each $u'$ is a type-IV vertex with $uu'\in E(G)$,
				  which contradicts $n\geq7$.
				
				\textbf{Subcase 3.2.} $3\leq\varepsilon(v)\leq4$.

				
				
				 Let $V_2$ denote the vertex set containing all type-IV vertices in $\bigcup\limits_{i=3}^{\varepsilon(v)}N_i$. Then $d(x)\geq3$ for each $x\in (\bigcup\limits_{i=3}^{\varepsilon(v)}N_i)\setminus V_2$ by (ii) of Lemma \ref{l3} and $\delta(G)=2$.

              \textbf{Subcase 3.2.1.} $|V_2|\geq1$.
				
				Let $x_1\in V_2$. Then $d(x_1)=2$ since $x_1$ is a type-IV vertex, and  $N(x_1)\subseteq N_2$ by (ii) of Lemma \ref{l3}. Suppose that $N(x_1)=\{x_2,x_3\}$, then $x_2x_3\in E(G)$ and $d(x_2)\geq3,d(x_3)\geq3$ since $x_1\in V_2$. By Claim 4, there exists $u\in V_1$ such that $d(u)\geq3$. We claim that $u\notin \{x_2,x_3\}$. Otherwise, $G[\{v_i,x_2,x_3,v,x_1,v_j\}]$ contains  a copy of $K^3_3$ with $\{i,j\}=\{1,2\}$, a contradiction. Thus we have $e(G)=e(G-V_2)+2|V_2|\geq \frac{2+(|N_2|+3)+(2|N_2|+1)+3(n-3-|N_2|-|V_2|)}{2}+2|V_2|=\frac{3n+|V_2|-3}{2}\geq\frac{3n-2}{2}$. If $e(G)=\frac{3n-2}{2}$, then $V_2=\{x_1\}$, $V_1=\{u\}$ with $d(x_2)=d(x_3)=d(u)=3$, and $d(x^*)=2$ for each $x^*\in N_2\setminus\{x_2,x_3,u\}$, which implies that $G+x_1v_1$ contains no a copy of $K^3_3$, a contradiction. Thus we have $e(G)\geq\frac{3n-1}{2}$.
				
				\textbf{Subcase 3.2.2.} $|V_2|=0$.

					\textbf{Claim 5.} There exists $x_4\in \bigcup\limits_{i=2}^{\varepsilon(v)}N_i$ such that $d(x_4)\geq4$.
					
					\begin{proof}
						Suppose to the contrary that  $d(x')\leq3$ for each $x'\in \bigcup\limits_{i=2}^{\varepsilon(v)}N_i$.  Then $d(x)=3$ for each $x\in \bigcup\limits_{i=3}^{\varepsilon(v)}N_i$. Let  $x_5\in N_{\varepsilon(v)}$ and $Q_8$ be a copy of $K^3_3$ in $G+x_5v$. Then $d(x_5)=3$, $d_{Q_8}(v)=1$ and $x_5\in V(CQ_8)$. Without loss of generality, let $N(x_5)=\{x_6,x_7,x_8\}$, and $V(CQ_8)=\{x_5,x_6,x_7\}$. Then  $x_6,x_7\in N_{\varepsilon(v)}\cup N_{\varepsilon(v)-1}$ since $x_5\in N_{\varepsilon(v)}$, and $x_6x_8\in E(G)$ or $x_7x_8\in E(G)$ by (i) of Lemma \ref{l3}. We may assume $x_6x_8\in E(G)$.
						
						If $x_6\in N_{\varepsilon(v)-1}$, then $d(x_6)\geq4$, a contradiction. 
						
						If $x_6,x_7\in N_{\varepsilon(v)}$, then $d(x_6)=d(x_7)=3$ and $x_8\in N_{\varepsilon(v)-1}$ since $N(x_5)=\{x_6,x_7,x_8\}$, and thus $x_7x_8\notin E(G)$ since $d(x_8)\leq3$ and there exists $x_9\in N_{\varepsilon(v)-2}$ such that $x_8x_9\in E(G)$, which implies that $G+vx_7$ contains no  copy of $K^3_3$, a contradiction.

						If $x_7\in N_{\varepsilon(v)-1}$ and $x_6\in N_{\varepsilon(v)}$, then $x_8\in N_{\varepsilon(v)-1}$ and $\varepsilon(v)=3$. Otherwise, $d(v,x_8)\geq3$, which implies that $G+vx_8$ contains no  copy of $K^3_3$, a contradiction. Now we may assume that $x_8v_2,x_7v_i\in E(G)$, then $x_8v_1,x_7v_j\notin E(G)$ since $x_7,x_8\in N_2\setminus V_1$, where $\{i,j\}=\{1,2\}$, and thus $d(x_7)=d(x_8)=3$, which implies that $x_7x_8\notin E(G)$ and $G+v_jx_7$ contains no  copy of $K^3_3$, a contradiction.
						
						Combining the above arguments, Claim 5 holds.
					\end{proof}
					
					By Claim 4 and Claim 5, we have $e(G)\geq \frac{2+(|N_2|+3)+(2|N_2|+1)+3(n-|N_2|-3)+1}{2}\geq \frac{3n-2}{2}$. Clearly, if $e(G)=\frac{3n-2}{2}$, then $V_1=\{u\}$, $3\leq d(u)\leq 4$, $d(u_1)=2$ for any $u_1\in N_2\setminus \{u\}$, and $d(w)=3$ for any $w\in \bigcup\limits_{i=3}^{\varepsilon(v)}N_i$ if $d(u)=4$. Let $u_1\in N_2$ and  $N(u_1)=\{v_i,u_2\}$ with $i=1$ or $2$. Then $u_2\in N_2$, otherwise, $u_2\in N_3$, and thus $u_1\not\sim N(u_2)\setminus\{u_1\}$,  a contradiction to (i) of Lemma \ref{l3}. Thus we have $uw\in E(G)$ for any $w\in N_3$, which implies $|N_3|\in\{1,2\}$ since $d(u)\in \{3,4\}$. 	If $|N_3|=1$, i.e.,  $N_3=\{w\}$,  then $u\not\sim N(w)\setminus\{u\}$, a contradiction to (i) of Lemma \ref{l3}.  If $|N_3|=2$, then $d(u)=4$. We assume that $N_3=\{w,w'\}$, then $d(w)=d(w')=3$. Furthermore, we have $ww'\in E(G)$, otherwise, $u\not\sim N(w)\setminus\{u\}$, a contradiction to (i) of Lemma \ref{l3}. Since $d(w)=d(w')=3$ and $G$ is $K^3_3$-free, we have $|N_4|=1$, which implies that $d(w'')=2$ for $w''\in N_4$,  contradicting $d(w'')=3$.

					\textbf{Case 4.}  Every vertex  of degree $2$ in $G$ is a type-IV vertex.

					Let $Z=\{z\in V(G)\mid d(z)=2\}$, $W=\{w\in N(z)\mid z\in Z\}$, $W_i$	be a nonempty subset of $W$	such that  there exists a vertex $z_1$ with $N(z_1)=\{w_1, w_2\}$ for any $w_1 w_2\in E(G[W_i])$, and 	$G[W_i]$	be a maximal connected component of $G[W]$, 
					 where $i\in \{1,\ldots,r\}$. Then  $W_1\cup\cdots\cup W_r$ is a partition of $W$. Furthermore, let $Z_i=\{z\in Z\mid N(z)\subseteq W_i\}$ for $i\in \{1,\ldots,r\}$. Then $Z_1\cup\cdots\cup Z_r$ is a partition of $Z$, and $|Z_i|\geq |E(G[W_i])|\geq |W_i|-1$ for each $i\in \{1,\ldots,r\}$. 
					 
					 Since $z_i$ is a type-IV vertex, we have  $d(w_{i1})\geq3$ and $d(w_{i2})\geq3$ for any $z_i\in Z$ with $N(z_i)=\{w_{i1},w_{i2}\}$. If $w_{i2}=w_{j1}$ and $\{w_{i1},w_{i2}\}\neq\{w_{j1},w_{j2}\}$  for $i\neq j$, we have $d(w_{i2})\geq4$, $d(w_{i2})+d(z_i)\geq3\times2$ and $d(w_{i1})+d(w_{i2})+d(w_{j2})+d(z_i)+d(z_{j})\geq3\times5-1$. Moreover, if $\{w_{i1},w_{i2}\}=\{w_{j1},w_{j2}\}$  for $i\neq j$, then $d(w_{i1})+d(w_{i2})+d(z_i)+d(z_{j})\geq3\times4$. Thus,  we have 
					\begin{equation}
						\sum\limits_{z\in Z_i}d(z)+\sum\limits_{w\in W_i}d(w)\geq\begin{cases}
							3(|Z_i|+|W_i|),&\text{if}\ |Z_i|\geq |W_i| \ \text{or} \ G[W_i]\not\cong P_{|W_i|}; \\
							3(|Z_i|+|W_i|)-1,&\text{if} \ |Z_i|= |W_i|-1 \ \text{and} \ G[W_i]\cong P_{|W_i|}.\label{e}
						\end{cases}
					\end{equation}

					\textbf{Claim 6.} There exists at most one $i\in\{1,\ldots,r\}$ such that $\sum\limits_{z\in Z_i}d(z)+\sum\limits_{w\in W_i}d(w)=3(|Z_i|+|W_i|)-1$.
				
				\begin{proof}
					Suppose to the contrary that $\sum\limits_{z\in Z_1}d(z)+\sum\limits_{w\in W_1}d(w)=3(|Z_1|+|W_1|)-1$ and  $\sum\limits_{z\in Z_2}d(z)+\sum\limits_{w\in W_2}d(w)=3(|Z_2|+|W_2|)-1$. Then for $j\in\{1,2\}$, we have $|Z_j|= |W_j|-1$ and $G[W_j]\cong P_{|W_j|}$ by \eqref{e} and $|Z_i|\geq |W_i|-1$ for each $i\in \{1,\ldots,r\}$. Without loss of generality, let $W_1=\{w_1,w_2,\ldots,w_b\}$ with $w_kw_{k+1}\in E(G)$,  $Z_1=\{z_1,\ldots,z_{b-1}\}$ with $N(z_k)=\{w_k,w_{k+1}\}$, where $b\geq2$ and $k\in\{1,\ldots,b-1\}$, $W_2=\{\mathsf{w}_1,\mathsf{w}_2,\ldots,\mathsf{w}_c\}$ with $\mathsf{w}_l\mathsf{w}_{l+1}\in E(G)$,  $Z_2=\{\mathsf{z}_1,\ldots,\mathsf{z}_{c-1}\}$ with $N(\mathsf{z}_l)=\{\mathsf{w}_l,\mathsf{w}_{l+1}\}$, where $c\geq2$ and $l\in\{1,\ldots,c-1\}$. Then $d(w_1)=d(w_b)=d(\mathsf{w}_1)=d(\mathsf{w}_c)=3$ and $d(w_p)=d(\mathsf{w}_q)=4$ for all $p\in\{1,\ldots,b\}\setminus\{1,b\}$ and  $q\in\{1,\ldots,c\}\setminus\{1,c\}$.
					
					If $b\geq3$ and $c\geq3$, then $G+w_2\mathsf{w}_2$  contains no  copy of $K^3_3$, a contradiction.

					\begin{figure}[ht]
						\centering
						\begin{tikzpicture}[
							scale=1,
							round/.style={circle, draw, thick, minimum size=6mm},  smallv/.style={
								circle, 
								draw, 
								thick, 
								inner sep=0pt,     
								minimum size=2mm 
							}
							]
							
							\node[smallv] (v1) at (0, 0)  {};
							\node[smallv] (v2) at (1.4, 0)  {};
							\node[smallv] (v3) at (0.7, -0.6)  {};
							\node[smallv] (v4) at (1.7, 1)  {};
							\node[smallv] (v5) at (0.7, 1.8)  {};
							\node[smallv] (v6) at (-0.3, 1)  {};
							\node[smallv] (v7) at (1.7, 1.75)  {};
							\node[smallv] (v8) at (-0.25, 1.75)  {};
							\draw[thick] (v1)--(v2);
							\draw[thick] (v3)--(v2);
							\draw[thick] (v1)--(v3);
							\draw[thick] (v2)--(v4);
							\draw[thick] (v4)--(v5);
							\draw[thick] (v5)--(v6);
							\draw[thick] (v6)--(v1);
							\draw[thick] (v4)--(v7);
							\draw[thick] (v5)--(v7);
							\draw[thick] (v5)--(v8);
							\draw[thick] (v6)--(v8);
						\end{tikzpicture}
						\caption{ Graph  $G_0$.}\label{f2}
					\end{figure}
					
					If $b\geq3$ and $c=2$, then $W_2=\{\mathsf{w}_1,\mathsf{w}_2\}$ with $d(\mathsf{w}_1)=d(\mathsf{w}_2)=3$. Let $Q_9$ be a copy of $K^3_3$ in $G+w_2\mathsf{w}_1$. Then $w_2,\mathsf{w}_1\in V(CQ_9)$, and thus $w_1\mathsf{w}_1\in E(G)$ or $w_3\mathsf{w}_1\in E(G)$ (if $b=3$). Similarly, we have  $w_1\mathsf{w}_2\in E(G)$ or $w_3\mathsf{w}_2\in E(G)$ (if $b=3$) since $G+w_2\mathsf{w}_2$ contains a copy of $K^3_3$. Furthermore, we have $w_1\mathsf{w}_1,w_3\mathsf{w}_2\in E(G)$ and thus  $b=3$ by $d(w_1)=3$, or $w_1\mathsf{w}_2,w_3\mathsf{w}_1\in E(G)$ and thus  $b=3$ since $d(w_1)=3$.
					 We may assume $w_1\mathsf{w}_1,w_3\mathsf{w}_2\in E(G)$. Clearly, $G\cong G_0$, where $G_0$ is shown in Figure \ref{f2}, but $G_0$ is not a $K^3_3$-saturated graph,  a contradiction.
					
					If $b=c=2$, then $W_1=\{w_1,w_2\}$, $W_2=\{\mathsf{w}_1,\mathsf{w}_2\}$, $Z_1=\{z_1\}$, $Z_2=\{\mathsf{z}_1\}$ and $d(w_1)=d(w_2)=d(\mathsf{w}_1)=d(\mathsf{w}_2)=3$. If $G[\{w_1,w_2,\mathsf{w}_1,\mathsf{w}_2\}]\cong2K_2$, then there exists a vertex $w^*\in V(G)\setminus (Z_1\cup Z_2\cup W_1\cup W_2)$ such that $N(w_1)=\{z_1,w_2,w^*\}$. Let $Q_{10}$ be a copy of $K^3_3$ in $G+w_1\mathsf{w}_1$. Then $w_1,\mathsf{w}_1\in V(CQ_{10})$, and thus $V(CQ_{10})=\{w_1,\mathsf{w}_1, w^*\}$ since $G[\{w_1,w_2,\mathsf{w}_1,\mathsf{w}_2\}]=2K_2$, which implies $N(\mathsf{w}_1)=\{\mathsf{z}_1,\mathsf{w}_2,w^*\}$. Similarly, let $Q_{11}$ be a copy of $K^3_3$ in $G+w_2\mathsf{w}_1$. Then $ V(CQ_{11})=\{w_2,\mathsf{w}_1,w^*\}$, and thus $N(w_2)=\{z_1,w_1,w^*\}$ since $d(w_2)=d(\mathsf{w}_1)=3$, which implies that $G+z_1w^*$ contains no  copy of $K^3_3$, a contradiction. If $G[\{w_1,w_2,\mathsf{w}_1,\mathsf{w}_2\}]\cong P_4$, we assume that $P_4=w_1w_2\mathsf{w}_1\mathsf{w}_2$. Thus $N(w_2)=\{z_1,w_1,\mathsf{w}_1\}$ and $N(\mathsf{w}_1)=\{\mathsf{z}_1,w_2,\mathsf{w}_2\}$, which implies  that $G+z_1\mathsf{w}_1$ contains no  copy of $K^3_3$, a contradiction. If $G[\{w_1,w_2,\mathsf{w}_1,\mathsf{w}_2\}]\cong C_4$, then $n=6$ since $d(w_1)=d(w_2)=d(\mathsf{w}_1)=d(\mathsf{w}_2)=3$,  which  contradicts $n\geq7$.
					
					Combining the above arguments, Claim 6 holds.
				\end{proof}
				 By Claim 6, we have 
				\begin{align}
					e(G)&=\frac{\sum\limits_{i=1}^{r}(\sum\limits_{z\in Z_i}d(z)+\sum\limits_{w\in W_i}d(w))+\sum\limits_{w''\in V(G)\setminus (Z\cup W)}d(w'')}{2}\nonumber \\ &\geq \frac{(3(|Z|+|W|)-1)+3(n-|Z|-|W|)}{2}=\frac{3n-1}{2}.\nonumber
				\end{align}
				
				Combining Cases 1-4, we have  $G\cong K_1\vee \frac{n-1}{2}K_2$ with odd $n$ or $e(G)\geq \frac{3n-1}{2}$. Furthermore, both $K_1\vee \frac{n-1}{2}K_2$  (odd $n$) and 
				$K_1\vee (\frac{n-4}{2}K_2\cup K_3)$  (even $n$) are $K^3_3$-saturated graphs of order $n$, and $e(G)=\begin{cases}
					\frac{3n-3}{2},&\text{if}\ G\cong K_1\vee \frac{n-1}{2}K_2  \\
					\frac{3n}{2},&\text{if}\ G\cong K_1\vee (\frac{n-4}{2}K_2\cup K_3)
				\end{cases}$. Therefore,  the conclusion holds.
		\end{proof}

		\begin{lemma}\label{ll2}
			Let $G$ be a connected $K^3_3$-saturated graph of order $n\geq6$. Then there exists $u\in V(G)$ such that  $d_{K^2_3} (u)\neq2$ for any  subgraph $K^2_3$ of $G$.
		\end{lemma}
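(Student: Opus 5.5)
The plan is to mimic the proof of Lemma \ref{l12}: exhibit a vertex $u$ that cannot play the role of the ``bare'' triangle vertex of a $K^2_3$. This is the vertex of the center part carrying no leaf, which is exactly the vertex of degree $2$ in $K^2_3$. If $G$ contains no copy of $K^2_3$, any vertex works, so assume $G$ contains one. Write it as a triangle $xyz$ with leaves $x'\sim x$ and $y'\sim y$, where $x',y'$ are distinct and lie outside $\{x,y,z\}$, so that $z$ is the bare vertex.

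The basic observation uses only that $G$ is $K^3_3$-free. If $z$ had a neighbour $w\notin\{x,y,x',y'\}$, then $xyz$ together with the leaves $x',y',w$ would be a copy of $K^3_3$. Hence every bare vertex $z$ of a $K^2_3$ satisfies
\[
N(z)\subseteq\{x,y,x',y'\},\qquad\text{so}\qquad d(z)\le 4 .
\]
Consequently, if $\Delta(G)\ge 5$, any vertex of maximum degree is the required $u$, and we are done. It remains to treat $\Delta(G)\le 4$.

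When $\Delta(G)\le4$, I would first look for a vertex lying in no triangle, since such a vertex is trivially never in $V(CK^2_3)$. So assume every vertex lies in a triangle. I would then try to show that some vertex $u$ of degree $3$ or $4$ cannot be bare, proceeding as follows.
\begin{enumerate}
\item For a bare vertex $u$ of degree $4$, the displayed inclusion forces $N(u)=\{x,y,x',y'\}$. Since $G$ is $K^3_3$-free and $n\ge6$, Observation \ref{o} says $G[N(u)]$ contains no $P_4$. This severely restricts the edges among $x,y,x',y'$.
\item For a bare vertex of degree $3$ with $N(u)=\{x,y,w\}$, we need $w\in\{x',y'\}$; that is, the third neighbour of $u$ is itself one of the two leaves.
\end{enumerate}
In both situations I would add a suitable non-edge. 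Natural candidates are an edge from $u$ to a vertex at distance $2$ (which exists because $n\ge6$, $G$ is connected and $\Delta\le4$), or the edge $x'y'$. Saturation then gives a copy $Q$ of $K^3_3$. Combining the local structure above with the degree bound $\Delta\le4$ and the replacement trick used repeatedly in Lemmas \ref{l1} and \ref{l3} (swap a leaf of $Q$ for an unused neighbour) should yield a copy of $K^3_3$ already in $G$, a contradiction. If no such contradiction arises for some vertex, that vertex is the desired $u$.

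The main obstacle is this bounded-degree case. One has to show that not every vertex can simultaneously be bare in some $K^2_3$, and the case analysis on the small neighbourhoods is where the real work lies. If the direct local argument gets too tangled, my first fallback is a global count: assume every vertex is bare in some $K^2_3$. Then each vertex has all its neighbours inside a set of the form $\{x,y,x',y'\}$ containing a triangle, and $G$ is $K^3_3$-saturated with maximum degree at most $4$. I would then pick a pair at maximum distance, which is at most $4$ by Lemma \ref{l1}, add that edge, and show that the resulting $K^3_3$ needs a center vertex of degree at least $5$, or a center vertex whose neighbourhood contains a $P_4$. This contradicts the inclusion above or Observation \ref{o}.
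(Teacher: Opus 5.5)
\textbf{There is a genuine gap: the core of the lemma is never proved.}

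Your opening observations are correct. If $z$ is the degree-$2$ (``bare'') vertex of a $K^2_3$ with triangle $xyz$ and leaves $x',y'$, then $K^3_3$-freeness gives $N(z)\subseteq\{x,y,x',y'\}$, so any vertex of degree at least $5$ works. But that is only the easy part. Everything after the reduction to $\Delta(G)\le 4$ is a plan, not a proof, and you say yourself that this is where the real work lies.
\begin{itemize}
\item The steps ``add a suitable non-edge'' and ``the replacement trick should yield a copy of $K^3_3$ already in $G$'' are never carried out.
\item The fallback's key claim is asserted rather than derived. You claim that the $K^3_3$ created by joining a pair at maximum distance must have a centre vertex of degree at least $5$ in $G$, or a $P_4$ in some neighbourhood. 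It is not clear why this should hold. If $d(a,b)\ge 3$, the copy of $K^3_3$ in $G+ab$ has (say) $b$ in the centre and $a$ as the leaf at $b$. Deleting $a$ leaves a $K^2_3$ of $G$ in which $b$ is bare, which is exactly what your standing assumption permits.
\item Analysing the neighbourhood of one vertex in isolation cannot show that it is not bare, since being bare is locally consistent.
\item Aiming specifically for a non-bare vertex of degree $3$ or $4$ is an unjustified strengthening of what is needed.
\end{itemize}

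\textbf{The missing idea} is to apply the assumption ``every vertex is bare'' to all three vertices of one triangle at once. The paper proceeds as follows.
\begin{itemize}
\item Take $v_1$ bare in $R_1$ with $V(CR_1)=\{v_1,v_2,v_3\}$, and copies $R_2,R_3$ of $K^2_3$ in which $v_2$, resp.\ $v_3$, is bare.
\item Your own inclusion, applied to $v_2$ in $R_2$, forces $v_1,v_3\in V(R_2)$. Otherwise you could attach the missing one as a third leaf at $v_2$.
\item Split according to $V(CR_2)\cap V(CR_1)\in\{\{v_2\},\{v_2,v_3\},\{v_1,v_2\},\{v_1,v_2,v_3\}\}$. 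Each case yields either a $K^3_3$ in $G$, or a $P_4$ inside some $G[N(v_i)]$, contradicting Observation \ref{o}. The $K^3_3$ typically arises after showing that several vertices have degree exactly $3$, so that $R_3$ cannot be placed.
\end{itemize}
For example, suppose $V(CR_2)=\{v_2,w_1,w_2\}$ meets $V(CR_1)$ only in $v_2$. Then $v_1,v_3$ must be the two leaves of $R_2$, say $w_1v_1,w_2v_3\in E(G)$. So $w_1v_1v_3w_2$ is a $P_4$ in $G[N(v_2)]$. No degree reduction is needed anywhere in this argument.
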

		
		\begin{proof}
			Suppose otherwise, then there exists a subgraph $K^2_3$ of $G$	such that $d_{K^2_3}(v)=2$ for any vertex $v\in V(G)$. 
			
			Let $v_1\in V(G)$. Then there exists a copy $R_1$ of $K^2_3$ in $G$ such that $d_{R_1}(v_1)=2$. We may assume $V(CR_1)=\{v_1,v_2,v_3\}$ and $v_2u_1,v_3u_2\in E(R_1)$, where $u_1,u_2\in V(G)\setminus\{v_1,v_2,v_3\}$. Furthermore, there exist two copies $R_2,R_3$ of $K^2_3$ in $G$ such that $d_{R_2}(v_2)=2$ and $d_{R_2}(v_3)=2$ since $v_2,v_3\in V(G)$.
			
			Clearly, we have $v_1u\notin E(G)$ for any $u\in V(G)\setminus V(R_1)$, otherwise, $G[V(R_1)\cup \{u\}]$ contains  a copy of $K^3_3$, a contradiction. Then $N(v_1)\subseteq\{v_2,v_3,u_1,u_2\}$. Now we complete the proof by the following three cases.
			
			\textbf{Case 1.} $V(CR_2)\cap V(CR_1)=\{v_2\}$.
			
			In this case, we have $v_1,v_3\in V(R_2)$. Otherwise,  $v_i\notin V(R_2)$  for $i=1$ or $3$, then $G[V(R_2)\cup \{v_i\}]$ contains a copy of $K^3_3$, a contradiction. Suppose that $V(R_2)=\{v_2,w_1,w_2\}$, then $w_1v_1, w_2v_3\in E(G)$ or $w_1v_3, w_2v_1\in E(G)$ since  $v_1,v_3\in V(R_2)$, which implies that $P_4$ is a subgraph of $G[N(v_2)]$, a contradiction to Observation \ref{o}. 
			
				\textbf{Case 2.} $V(CR_2)\cap V(CR_1)=\{v_2,v_3\}$.
			
		Then there exists $v_4\in V(G)\setminus\{v_1,v_2,v_3\}$ such that $V(CR_2)=\{v_2,v_3,v_4\}$. Clearly, $v_1\in V(R_2)$, otherwise, $G[V(R_2)\cup\{v_1\}]$ contains  a copy of $K^3_3$, a contradiction.  Moreover,  we have $v_1v_2\notin E(R_2)$ since $d_{R_2}(v_2)=2$. Then $v_1v_4\in E(R_2)$ or $v_1v_3\in E(R_2)$. 
			
			 If $v_1v_4\in E(R_2)$, then there exists $v_5\in V(G)\setminus \{v_1,v_2,v_3,v_4\}$ such that $v_3v_5\in E(R_2)$. For any $w\in V(G)\setminus\{v_1,v_2,v_3,v_4,v_5\}$, we have $v_1w,v_2w,v_4w\notin E(G)$, otherwise, $G$ contains a copy of $K^3_3$, a contradiction. Since $n\geq6$ and $G$ is connected, there exists $w'\in V(G)\setminus\{v_1,v_2,v_3,v_4,v_5\}$ such that $w'v_3\in E(G)$ or $w'v_5\in E(G)$. Whether $w'v_3\in E(G)$ or $w'v_5\in E(G)$, we have $v_5v_1,v_5v_2,v_5v_4\notin E(G)$ since $G$ is $K^3_3$-free. Therefore, $d(v_1)=d(v_2)=d(v_4)=3$.  It follows that $V(CR_3)\cap \{v_1,v_2,v_3,v_4\}=\{v_3\}$, which implies that  $G[V(R_3)\cup\{v_1\}]$ contains  a copy of $K^3_3$, a contradiction. 
			 
			 If $v_1v_3\in E(R_2)$, then there exists $v_6\in V(G)\setminus \{v_1,v_2,v_3,v_4\}$ such that $v_4v_6\in E(R_2)$. Since $G$ is $K^3_3$-free, we have $v_iv_7\notin E(G)$ for any $v_7\in V(G)\setminus \{v_1,v_2,v_3,v_4,v_6\}$ and $i\in \{2,3\}$, and thus $v_jv_6\in E(R_1)$ for $j=2$ or $3$  since $R_1$ is a copy of $K^2_3$, which implies that $P_4$ is a subgraph of $G[N(v_j)]$, a contradiction to Observation \ref{o}. 
			 
			 	\textbf{Case 3.} $V(CR_2)\cap V(CR_1)=\{v_2,v_1\}$.
			 	
			 	Similar to Case 2, there exists $v'_4\in V(G)\setminus\{v_1,v_2,v_3\}$ such that $V(CR_2)=\{v_2,v_3,v'_4\}$, and $v_3v'_4\in E(R_2)$ or $v_3v_1\in E(R_2)$.
			 	
			 	If $v_3v'_4\in E(R_2)$, then $d(v_3)=d(v_2)=d(v'_4)=3$ by a similar argument, which contradicts that $R_1$ is a copy  of $K^2_3$ in $G$ such that $d_{R_1}(v_1)=2$ and $V(CR_1)=\{v_1,v_2,v_3\}$.
			 	
			 	If $v_3v_1\in E(R_2)$, then there exists $v'_6\in V(G)\setminus \{v_1,v_2,v_3,v'_4\}$ such that $v'_4v'_6\in E(R_2)$. Since    $G$ is $K^3_3$-free, we have $v_jv'_7\notin E(G)$ for any $v'_7\in V(G)\setminus \{v_1,v_2,v_3,v'_4,v'_6\}$ and $j\in \{1,2\}$. Furthermore, we have $v'_6v_j\notin E(G)$, otherwise, $P_4$ is a subgraph of $G[N(v_j)]$, a contradiction to Observation \ref{o}. Thus   $d(v_1)=d(v_2)=3$, which implies $|V(CR_3)\cap V(CR_1)|=2$. If $V(CR_3)\cap V(CR_1)=\{v_3,v_1\}$, then $V(CR_3)=\{v_3,v_1,v'_4\}$ since $d(v_1)=3$. Clearly, $v_2v'_4\in E(R_1)$ since $d(v_2)=3$. Then there exists $v'\in V(G)\setminus\{v_1,v_2,v_3,v'_4\}$ such that $v'v_3\in E(R_1)$.  Thus $P_4$ is a subgraph of $G[N(v_3)]$ if $v'=v'_6$, a contradiction to Observation \ref{o}; and $G$ contains a copy of $K^3_3$ if $v'\neq v'_6$, also a contradiction. If $V(CR_3)\cap V(CR_1)=\{v_3,v_2\}$, we can obtain a contradiction by similar proof, and we omit the details.

			\textbf{Case 4.} $V(CR_2)\cap V(CR_1)=\{v_1,v_2,v_3\}$.
			
			 By a similar argument to Cases 1-3, we obtain that  $V(CR_3)\cap V(CR_1)=\{v_1,v_2,v_3\}$. By $N(v_1)\subseteq\{v_2,v_3,u_1,u_2\}$, we have $v_1u_1\in E(G)$ or $v_1u_2\in E(R_1)$.
			 If $v_1u_2\in E(R_1)$, then $v_3u_1\in E(R_1)$, which implies that $P_4$ is a subgraph of $G[N(v_3)]$,  a contradiction to  Observation \ref{o}. Therefore,  $v_1u_1\in E(R_1)$. Clearly, $u_2\in V(R_3)$ since $v_3u_2\in E(G)$, and then $u_2v_i\in E(R_3)$ for $i=1$ or $2$, which implies that $P_4$ is a subgraph of $G[N(v_i)]$,  a contradiction to  Observation \ref{o}.

			Combining the above arguments, we complete the proof.
		\end{proof}
		
		\hspace*{6mm}
		
		\noindent\textbf{\textit{Proof of Theorem \ref{t1}.}}  Let $G$ be a $K^3_3$-saturated graph of order $n$. Then we consider the following two cases.
		
			\textbf{Case 1.} $G$ is connected.
			
			If $\delta(G)=1$, then $e(G)>\frac{3n}{2}$ by Theorem \ref{l2}.
			
			If  $\delta(G)=2$, then $e(G)\geq\begin{cases}
				\frac{3n-3}{2},&\text{if}\ n \ \text{is odd} \\
				\frac{3n}{2},&\text{if}\ n\ \text{is even}
			\end{cases}$ and $e(G)=\frac{3n-3}{2}$ if and only if $G\cong K_1\vee \frac{n-1}{2}K_2$ by Theorem \ref{p1}.
			
			If $\delta(G)\geq3$, then $e(G)\geq \frac{3n}{2}$.
			
				\textbf{Case 2.} $G$ is disconnected.
				
				Then $G\cong \bigcup\limits_{i=1}^{k}G_i$, where $k\geq2$ and each $G_i$ is  connected. Clearly, every $G_i$ is either a $K^3_3$-saturated graph with $|V(G_i)|\geq6$, or isomorphic to $K_r$ with $1\leq r\leq5$.

				Firstly, if $G_1$ is a $K^3_3$-saturated graph with $|V(G_1)|\geq6$, then $G_i\cong K_5$ for  any $i\in\{2,\ldots,k\}$. Otherwise, $G_2$ is either a $K^3_3$-saturated graph with $|V(G_2)|\geq6$ or isomorphic to 
				$K_r$ with $1\leq r\leq 4$. By Lemma \ref{ll2} and $|V(K_r)|\leq4$, there exist $v_1\in V(G_1)$ and $v_2\in V(G_2)$ such that $d_{K^2_3}(v_1)\neq2$ for any  subgraph $K^2_3$ of $G_1$ and $d_{K^2_3}(v_2)\neq2$ for any  subgraph $K^2_3$ of $G_2$, which implies that $G+v_1v_2$ contains no copy of $K^3_3$, a contradiction.
				 Therefore, we have $G\cong G_1\cup (k-1)K_5$, and $e(G)\geq \frac{3(n-5k+5)-3}{2}+10(k-1)=\frac{3n+5k-8}{2}\geq\frac{3n+2}{2}$ by Case 1 and $k\geq2$.

			Secondly, if $G_i$ is not a $K^3_3$-saturated graph with $|V(G_i)|\geq6$ for any $i\in \{1,\ldots,k\}$, then $G\cong (k-p)K_5 \cup p K_r$ with $1\leq r\leq 4$, $p\in \{0,1\}$. Thus we have $e(G)= \frac{4(n-pr)+pr(r-1)}{2}=\frac{4n-pr(5-r)}{2}\geq \frac{3n+1}{2}$ since $n\geq7$.
			
			Combining the above arguments, we complete the proof.  $\hfill\blacksquare$

				\section{Conclusion}
				
			\hspace{1.5em}	In this paper, we study $K^2_t$-saturated graphs with $t\geq4$ and $K_3^3$-saturated graphs. About $K_3^3$-saturated graphs,  we only characterize $\mathrm{SAT}(n,K^3_3)$ when $n$ is odd. For even $n$, we only give one extremal graph. Although a complete characterization of $\mathrm{SAT}(n,K^3_3)$ can be achieved by the method in Theorem \ref{p1}, it would make the proof rather lengthy and cumbersome. Meanwhile, we conjecture that the following result holds.
				\begin{conjecture}
			There exists a constant $k^*$ such that  \\ $\mathrm{SAT}(n,K^3_3)=\begin{cases}
			\{K_1\vee \frac{n-1}{2}K_2\},&\text{if}\ n \ \text{is odd}; \\
				\{K_1\vee (\frac{n-4}{2}K_2\cup K_3)\},&\text{if}\ n\ \text{is even}
			\end{cases}$ whenever $n\geq k^*$.
			\end{conjecture}

			For $t=4$, we completely characterize $\mathrm{SAT}(n,K^2_4)$, and for $t\geq5$, we characterize $\mathrm{CSAT}(n,K^2_t)$. However, $\mathrm{SAT}(n,K^2_t)$  is difficult to determine for $t\geq5$, which depends on the relationship among $n$, $t$ and $f$ given in Theorem \ref{t4}. Furthermore, by the proof of Theorem \ref{t4}, we easily obtain the following conclusion.
			\begin{remark}
Let $G_1,f',r',f,n$ be as defined in Theorem \ref{t4} and its proof. Then $\mathrm{SAT}(n,K^2_t) \\ \subseteq \{K_{f'}\cup \frac{n-f'}{t+1}K_{t+1}, \mathcal{SM}^t_{|V(G_1)|}(K_{r'})\cup \frac{n-|V(G_1)|}{t+1}K_{t+1},  \mathcal{SM}^t_{n}(K_{f})\}$ with $\{f',r',f\}\subseteq\{\frac{t-1}{2},\frac{t}{2},\\ \frac{t+1}{2},\frac{t+2}{2},\frac{t+3}{2}\}$.
			\end{remark}
			
			Based on the results of this paper, we propose the following conjecture after careful consideration.
			\begin{conjecture}
				Let $2\leq s\leq t-2$. Then there exists a function $f(s,t)$ independent of $n$ such that $\mathrm{sat}(n,K^s_t)=\frac{t+s-2}{2}n+f(s,t)$.
			\end{conjecture}
			
				\section*{\bf Funding}
			
			\hspace{1.5em} This work is  supported by the National Natural Science Foundation of China (Grant Nos. 12371347, 12271337).
			
			\section*{Declarations}
			
			\textbf{Conflict of interest}\  The authors declare that they have no known competing financial interests or personal relationships that could have appeared to influence the work reported in this paper.
			
			\vskip 0.5em
			
			\noindent\textbf{Data availability} \  No data was used for the research described in the article.

\end{document}